\documentclass[10pt]{article}

\usepackage{amssymb, latexsym, amsfonts, pigpen, enumitem, mathrsfs, amsthm, bbm, stackrel, todonotes, mathtools,caption, scalerel}
\usepackage[matrix,arrow,curve]{xy}
\usepackage[letterpaper,top=1.05in, bottom=1.05in, left=1.05in, right=1.05in]{geometry}
\usepackage[colorlinks=true,pagebackref=true]{hyperref}

\newcommand\C{{\mathcal C}}

\newcommand{\D}{\mathcal D}

\newcommand*{\rep}{\mathop{\mathrm {rep}}\nolimits}

\newcommand{\ind}{{\mathrm{ind}}}

\newcommand{\res}{{\mathrm{res}}}

\newcommand{\hull}{{\mathrm{hull}}}
\newcommand{\id}{{\mathrm{id}}}

\newcommand*{\Dd}{\mathop{\mathrm D\kern0pt}\nolimits}

\newcommand*{\Hom}{\mathop{\mathrm {Hom}}\nolimits}
\newcommand*{\End}{\mathop{\mathrm {End}}\nolimits}
\newcommand*{\iEnd}{\mathop{\underline{\mathrm{End}}}\nolimits}
\newcommand*{\iHom}{\mathop{\underline{\mathrm{Hom}}}\nolimits}

\newcommand*{\Coh}{\mathop{\sf {Coh}}\nolimits}
\newcommand*{\CohAlg}{\mathop{\sf {CohAlg}}\nolimits}
\newcommand*{\SAlg}{\mathop{\sf {SAlg}}\nolimits}
\newcommand*{\CSAlg}{\mathop{\sf {CSAlg}}\nolimits}

\newcommand*{\Alg}{\mathop{\sf {Alg}}\nolimits}
\newcommand*{\QCoh}{\mathop{\sf {QCoh}}\nolimits}

\newcommand{\Gm}{\mathbb G}

\newcommand{\Z}{{\mathbb Z}}

\newcommand{\lrep}[1]{\mathop{#1\textrm{-}\mathrm{rep}}}

\newcommand{\Spec}{\operatorname{Spec}}
\newcommand{\Smk}{\mathcal{S}m_k}
\newcommand{\ksep}{k^{\mathrm{sep}}}
\newcommand{\SmGk}{\mathcal{S}m^G_k}
\newcommand{\op}{{\mathrm{op}}}
\newcommand{\PGL}{\mathrm{PGL}}
\newcommand{\Gal}{\mathrm{Gal}}
\newcommand{\mc}[1]{\mathcal{#1}}
\newcommand{\mr}[1]{\mathrm{#1}}

\newcommand{\refbr}[1]{{\hyperref[#1]{(\ref*{#1})}}}

\newcommand{\lebru}{\le_{\scriptscriptstyle \mathrm{B}}}
\newcommand{\leant}{\le_{\mathrm{a}}}

\newcommand{\lfaktor}[2]{
\mathchoice{
    \raisebox{-0.7pt}{$\displaystyle {#1}$}
    \mkern0.2mu \backslash \mkern0.2mu
    \raisebox{0.7pt}{$\displaystyle {#2}$}
    }
    {
    \raisebox{-0.7pt}{${#1}$}
    \mkern2.2mu \backslash \mkern1.2mu
    \raisebox{0.7pt}{${#2}$}
    }
    {
    \raisebox{-0.7pt}{$\scriptstyle {#1}$}
    \mkern-2.5mu \backslash \mkern0.5mu
    \raisebox{0.7pt}{$\scriptstyle {#2}$}
    }
    {
    \raisebox{-0.7pt}{$\scriptscriptstyle {#1}$}
    \mkern-3.2mu \backslash \mkern-0.2mu
    \raisebox{0.7pt}{$\scriptscriptstyle {#2}$}
    }
	}   

\newcommand{\rfaktor}[2]{
\mathchoice{
    \raisebox{0.7pt}{$\displaystyle {#1}$}
    \mkern-0.2mu / \mkern-0.2mu
    \raisebox{-0.7pt}{$\displaystyle {#2}$}
    }
    {
    \raisebox{0.7pt}{${#1}$}
    \mkern-0.2mu / \mkern-0.2mu
    \raisebox{-0.7pt}{${#2}$}
    }
    {
    \raisebox{0.5pt}{$\scriptstyle {#1}$}
    \mkern-2.5mu / \mkern-0.5mu
    \raisebox{-0.5pt}{$\scriptstyle {#2}$}
    }
    {
    \raisebox{0.1pt}{$\scriptscriptstyle {#1}$}
    \mkern-0.2mu / \mkern-0.2mu
    \raisebox{-0.1pt}{$\scriptscriptstyle {#2}$}
    }
	}

\def\text#1{\mbox{#1}}

\newtheorem{theorem}{Theorem}[subsection]
\newtheorem{theorem_intr}{Theorem}[section]
\newtheorem{corollary_intr}[theorem_intr]{Corollary}
\newtheorem{lemma}[theorem]{Lemma}
\newtheorem{proposition}[theorem]{Proposition}
\newtheorem{corollary}[theorem]{Corollary}

\theoremstyle{definition}
\newtheorem{definition}[theorem]{Definition}
\newtheorem{example}[theorem]{Example}

\theoremstyle{remark}
\newtheorem{remark}[theorem]{Remark}

\numberwithin{equation}{theorem}

\long\def\comment#1{}
\newcommand{\Addresses}{{
    \bigskip
	\footnotesize
		
	Alexey~Ananyevskiy, Mathematisches Institut, Ludwig-Maximilians-Universit\"at M\"unchen, Theresienstr. 39, D-80333 M\"unchen, Germany. \textit{E-mail address: alseang@gmail.com}
	\medskip
		
	Alexander~Samokhin, Fakult\"at f\"ur Mathematik, Universit\"at Bielefeld, 33501, Bielefeld, Germany.
    \textit{E-mail address: alexander.samokhin@math.uni-bielefeld.de}
}}

\begin{document}

\title{\bf Semiorthogonal decompositions\\ for families of twisted flag varieties}
	
\author{Alexey~Ananyevskiy and Alexander~Samokhin}

\date{}

\maketitle
	
\thanks{}
    
\begin{abstract}
	We show that the derived categories of smooth families of twisted generalized flag varieties admit semiorthogonal decompositions into derived categories of twisted sheaves over the base. In particular, we obtain a categorification of Panin’s computation of the Quillen K-theory of twisted flag varieties. The main ingredient is a generalization of the Samokhin–van der Kallen semiorthogonal decomposition for representation categories of parabolic subgroups from split simply connected semisimple groups to arbitrary quasi-split semisimple groups, including non-simply connected cases.
\end{abstract}

\tableofcontents

\section{Introduction}\label{sec:intro}

Derived categories of coherent sheaves on algebraic varieties were introduced in the work of Grothendieck and Verdier as a natural framework for studying derived functors and coherent duality. Although initially regarded primarily as a technical tool from homological algebra, they are now recognized as powerful and intrinsically interesting invariants of algebraic varieties. These invariants encode substantial geometric information, for example, when the canonical or anti-canonical sheaf is ample one can reconstruct the variety $X$ from its bounded derived category $\D^b(X)$ \cite[Theorem~2.5]{BO01}. It is therefore not surprising that these categories are, in general, difficult to understand.
Nevertheless, in special cases one can decompose derived categories into simpler, more manageable pieces, and such decompositions are often closely tied to the geometry of the underlying variety. The first example of such a decomposition into the simplest possible building blocks $\D^b(k)$ was given by Beilinson \cite{B79}:
\[
\D^b(\mathbb{P}^n_k) = \langle \D^b(k), \D^b(k),\hdots, \D^b(k)\rangle.
\]
Kapranov later generalized this result to flag varieties of type $A_n$ \cite{Kap84} and to quadrics \cite{Kap88}. These decompositions arise from full exceptional collections, that is, ordered sequences of objects in $\D^b(X)$ which generate the entire category, have minimal derived endomorphism algebras, and satisfy a semiorthogonality condition ensuring the absence of (shifted) morphisms in $\D^b(X)$ going backwards in the sequence. These results led to the conjecture that every projective variety $\rfaktor{G}{P}$ homogeneous under a split semisimple group G admits a full exceptional collection, and hence that its derived category decomposes into copies of $\D^b(k)$. This conjecture attracted considerable attention, and many special cases were established over the years; we refer the reader to the survey \cite{Fon24}. The recent paper \cite{SvdK24} settles the conjecture in full generality and in a uniform way. There, the authors construct a semiorthogonal decomposition 
\[
\D^b(\rep P) = \langle \D^b(\rep G), \D^b(\rep G),\hdots, \D^b(\rep G) \rangle
\]
for a split simply connected semisimple group $G$ and a parabolic subgroup $P\leqslant G$, and then transfer this decomposition to $\rfaktor{G}{P}$ using the standard associated sheaf functor $\D^b(\rep P)\to \D^b(\rfaktor{G}{P})$.

Once the conjecture is settled for varieties of the form $\rfaktor{G}{P}$, it is natural to ask what happens in families and for twisted forms of such varieties.
Several partial results have been known for some time. Orlov showed that the derived category of a relative flag bundle associated to a vector bundle decomposes into components equivalent to the derived category of the base \cite{O93}. Kuznetsov obtained decompositions for flat (not necessarily smooth) quadric fibrations \cite{Kuz08}. Bernardara proved that the derived category of a Severi--Brauer scheme $\mr{SB}(\mc{A})\to S$ decomposes into copies of derived categories of twisted sheaves over the base \cite{Ber09}:
\[
\D^b(\mr{SB}(\mc{A})) = \langle \D^b(S), \D^b(S,\mc{A}),\hdots, \D^b(S,\mc{A}^{\otimes n})\rangle.
\]
Similar results were obtained for generalized Severi--Brauer varieties and schemes \cite{Bl12, Ba16, DRC25}.
One can also ask a simpler question about the decomposition of the $\mr{K}$-motive $\mr{K}(X)$ of the twisted form of $\rfaktor{G}{P}$, and this was completely settled by Panin \cite{Pan94}, who showed that 
\[
\mr{K}(X) \cong \bigoplus \mr{K}(\mr{A}_i),
\]
where $\mr{A}_i$ are separable algebras introduced by Tits \cite{Tits71}.

In this paper, we generalize most of the above results by showing that for twisted forms of projective homogeneous varieties and for families arising from torsors, the bounded derived category of the total space admits a decomposition into components given by derived categories of twisted coherent sheaves on the base.
\begin{theorem_intr}[{Theorem~\ref{thm:twisted_flags}}] \label{thm:twisted_flags_intr}
    Let $G$ be a quasi-split semisimple group over a field $k$, $P\leqslant G$ be a parabolic subgroup and $\mathcal{E}\to S$ be a right $G$-torsor over a smooth variety $S$ over $k$. Then there exists an $S$-linear semiorthogonal decomposition 
    \[
    \D^b(\rfaktor{\mc{E}}{P})= \langle \D_1,\D_2,\hdots,\D_n \rangle
    \]
    with $\D_i\cong \D^b(S,\mc{A}_i)$ being the bounded derived category of coherent sheaves of modules over a sheaf of coherent separable algebras $\mc{A}_i$ over $S$. Furthermore, the following holds:
    \begin{enumerate}
        \item If $G$ is split and simply connected then there exists a semiorthogonal decomposition as above with $\D_i\cong \D^b(S)$ and $n=[W(G):W(P)]$.    
        \item If $G$ is simply connected then there exists a semiorthogonal decomposition as above with $\D_i\cong \D^b(S_{k_i})$ being the derived category of the base change of $S$ to a finite separable field extension $k_i/k$ and with $\sum_{i=1}^n [k_i:k]=[W(G):W(P)]$.
        \item If $G$ is split then all $\mc{A}_i$ may be chosen to be sheaves of Azumaya algebras and $n=[W(G):W(P)]$.
    \end{enumerate}
    Here $W(G)$ and $W(P)$ are the Weyl groups of $G$ and $P$ respectively.
\end{theorem_intr}

\noindent This decomposition arises from a full $S$-relative separable-exceptional collection in $\D^b(\rfaktor{\mc{E}}{P})$ (see Definition~\ref{def:relative_decomposition}) which is a mild generalization of the notion of a fiberwise full exceptional collection. The construction involves a choice, namely, a total order on $W(G)$, and different choices may yield additional orthogonality properties, see Remark~{\hyperref[rem:comments_on_SOD_twisted_flag]{\ref*{rem:comments_on_SOD_twisted_flag}.2}}. The following special cases recover the known results:
\begin{itemize}
    \item $G=\mr{SL}_n$: flag bundles of vector bundles with trivial determinant,
    \item $G=\mr{PGO}_n$, $P=P_1$: smooth quadric fibrations,
    \item $G=\mr{PGL}_n$: generalized Severi--Brauer schemes (including flag bundles of arbitrary vector bundles).
\end{itemize}
The sheaves of algebras appearing above are explicit and are closely related to Tits algebras \cite{Tits71}, see Remark~\ref{rem:comments_on_SOD_twisted_flag}. For $S=\Spec k$ we obtain the following categorification of Panin's decomposition of $\mr{K}$-motives. 
\begin{corollary_intr}[{Corollary~\ref{cor:Panin}}]
    Let $X\in\Smk$ be a smooth projective variety homogeneous under an action of a reductive group. Then there exists a semiorthogonal decomposition
    \[
        \D^b(X)= \langle \D_1,\D_2,\hdots,\D_n \rangle
    \]
    with $\D_i\cong \D^b(A_i)$ and $A_i$ being a separable algebra over $k$ (Tits algebra).
\end{corollary_intr}
\noindent See also Example~\ref{ex:unitary} for the treatment of a particular case of a unitary Grassmannian, which illustrates this semiorthogonal decomposition.

Theorem~\ref{thm:twisted_flags_intr} is obtained by twisting a universal decomposition of derived categories of representations given in the next theorem via a base-change argument (Theorem~\ref{thm:base_change}).

\begin{theorem_intr}[{Theorem~\ref{thm:SOD_general}}] \label{thm:rep_intr}
   Let $G$ be a quasi-split semisimple algebraic group over a field $k$ and $P\leqslant G$ be a parabolic subgroup. Then there is a semiorthogonal decomposition
    \[
        \D^b(\rep P) = \langle \D_1,\D_2,\hdots,\D_n \rangle
    \]
    with $\D_i \cong \D^b(\lrep{A_i} G)$. Here $A_i$ is the matrix algebra $\mr{M}_{n_i}(k_i)$ equipped with an action of $G$, $k_i/k$ is a finite separable field extension, and $\lrep{A_i} G$ is the abelian category of $A_i$-representations of $G$, that is the category of modules over $A_i$ in $\rep G$. This decomposition arises from a full $G$-relative separable-exceptional collection (see~Definition~\ref{def:rel_rep}) in $\D^b(\rep P)$.
\end{theorem_intr}
The reference to universality is justified by the fact that every $\rfaktor{\mc{E}}{P}\to S$ fits in a Cartesian square
\[
\xymatrix{
\rfaktor{\mc{E}}{P} \ar[r]\ar[d]_{\pi_{\mc{E}}} & \mc{B}P \ar[d]\\
S \ar[r]^\xi & \mc{B}G
}
\]
of stacks, with $\mc{B}P$ and $\mc{B}G$ being the respective classifying stacks and the bottom horizontal arrow $\xi$ classifying the torsor $\mc{E}$. Since $\D^b(\mc{B}P)=\D^b(\rep P)$ and $\D^b(\mc{B}G)=\D^b(\rep G)$, the decomposition of Theorem~\ref{thm:twisted_flags_intr} may be viewed as a pullback of the decomposition of Theorem~\ref{thm:rep_intr}. Although the K\"unneth formula for derived categories of stacks may be a tricky question (see \cite[Theorem~1.2 and Remark~1.3]{BZFN10} and note that the stacks $\mc{B}G$ and $\mc{B}P$ are not perfect in the positive characteristic \cite{HNR19}), the situation here is simpler since the relative anti-canonical line bundle $\omega^*_{\pi_{\mc{E}}}$ is ample (Lemma~\ref{lem:canonical_antiample}) and clearly comes as a pullback (Lemma~\ref{lem:canonical_equivariant}), which ensures the necessary generation. 

The proof of Theorem~\ref{thm:SOD_general} begins with \cite[Theorems~14.2 and~14.3]{SvdK24}, which treats the split simply connected case. In order to cover the quasi-split case we carefully adjust the constructions of \cite{SvdK24} taking into account the fields of definition of elements of the Weyl group and the respective Joseph and relative Schubert modules, along the way obtaining some additional orthogonality properties and certain independence from the choice of a total order (Proposition~\ref{prop:more_orthogonality}) for the exceptional collections constructed in \cite{SvdK24}. Then we pass from the simply connected case to a general one adapting the ideas from \cite{Ela09} to the setting of groups of multiplicative type.

The exposition in the paper is slightly non-linear with the main result of Section~\ref{sec:SODs_twisted_flags} making use of the main result of Section~\ref{sec:relative_split}. The reason is that the first half of the paper, including Section~\ref{sec:SODs_twisted_flags}, uses the language of equivariant sheaves, while the second half of the text makes use of the representation-theoretic terminology. If the reader prefers a linear exposition, we recommend first skipping Section~\ref{sec:SODs_twisted_flags} and returning to it after Section 3. 

\textbf{Acknowledgments.} The work of the first author is supported by DFG Heisenberg grant AN 1545/1-1 and by the DFG research grant AN 1545/4-1. The work of the second author is supported in part by the Deutsche Forschungsgemeinschaft (SFB-TRR 358/1 2023 - 491392403). He also acknowledges the hospitality of the LMU M\"unchen during his visits in May 2025 and in July 2026.
The authors would like to thank Andrei Lavrenov, Vladimir Sosnilo and Wilberd van der Kallen for valuable discussions. The first author would also like to thank Ivan Panin, whose influence on this project cannot be overstated. In particular, it was through the work on the thesis written under Ivan Panin’s direction (\cite{Ana12} was a part of the project) that his interest in this subject first developed.

\textbf{AI declaration.} No AI tools were used in the mathematical research or writing of the present~paper.

Throughout the paper, we employ the following assumptions, conventions, and notations.
\begin{itemize} \itemsep0pt
	\item A \textit{variety} over a field $k$ is a separated reduced scheme of finite type over $k$.
    \item We mostly follow \cite{Jan03} and \cite{Mi17} in the treatment of algebraic groups. An algebraic group over a field $k$ is a smooth algebraic group variety over $k$, and a parabolic subgroup $P\leqslant G$ of an affine algebraic group $G$ over a field $k$ is a smooth algebraic subgroup such that the variety $\rfaktor{G}{P}$ is proper over $k$ \cite[Definition~17.15, Remark~17.30]{Mi17}.
    \item By a full subcategory, we mean a strictly full subcategory.
    \item All functors between derived categories are implicitly assumed to be derived.
    \item We often omit pullback, restriction of scalars, and forgetful functors from the notation. If a tensor product is taken over the structure sheaf (resp. base field) we omit the structure sheaf (resp. the base field) from the subscript.

\end{itemize}		 

\begin{tabular}{l|l}
	$k$ & a field \\
	$\Smk$ & the category of smooth varieties over $k$\\
    $G$ & an affine algebraic group over $k$\\
    $\SmGk$ & the category of smooth varieties over $k$ with a left $G$-action\\
    $\Coh_G X$, $\QCoh_G X$ & the categories of equivariant coherent and quasi-coherent sheaves on $X$\\
    $\CohAlg_G X$, $\SAlg_G X$ & the categories of coherent sheaves of algebras and separable algebras on $X$\\
    $\D^b(X,\mc{A})$ & the bounded derived category of left $\mc{A}$-modules for $\mc{A}\in \SAlg_G X$\\
    $\iHom$, $\iEnd$ & the internal Hom and End functors\\
    $\hull(\mc{S})$ & the smallest triangulated subcategory containing $\mc{S}$\\
    $\lrep{K} G$ & the category of finite dimensional $K$-representations of $G$\\
    $\SAlg_G k$ & the category of $G$-equivariant separable algebras over $k$\\
    $\CSAlg_G k$ & the category of $G$-equivariant central simple algebras over $k$\\
    $\D^b(\lrep{A} G)$ & the bounded derived category of left $A$-modules for $A\in \SAlg_G k$\\
\end{tabular}

\section{Relative SODs for derived categories of equivariant sheaves}

\subsection{Recollection on equivariant derived categories of twisted sheaves}
\label{sec:recollection_derived}

In this section we recall some generalities on derived categories of equivariant twisted coherent sheaves and introduce the relevant notation. Most of the setup could also be packed using the language of stacks, but we deliberately choose to stay in a more elementary setting to avoid some unpleasant technicalities. By the same reason we deal only with the bounded derived categories of smooth varieties, although one can also treat the singular case using the categories of perfect complexes.

In this section $G$ is an affine algebraic group over a field $k$.

\begin{definition} \label{def:derived_Azumaya}
For $X\in \SmGk$ we denote $\Coh_G X$ and $\QCoh_G X$ the abelian categories of coherent and quasi-coherent $G$-equivariant sheaves on $X$ as in \cite[\S~1.2]{Th87}, and by $\CohAlg_G X$ the category of monoids in $\Coh_G X$, that is, the category of \textit{equivariant coherent sheaves of algebras} on $X$. An algebra $\mc{A}\in \CohAlg_G X$ gives rise to the abelian categories $\Coh_{G} (X,\mc{A})$ and $\QCoh_{G} (X,\mc{A})$ of coherent and quasi-coherent $G$-equivariant left $\mc{A}$-modules. We denote $\D(\QCoh_{G} (X,\mc{A}))$ the derived category of the latter one, and $\D^b_G(X,\mc{A}) \subseteq \D(\QCoh_{G} (X,\mc{A}))$ its full triangulated subcategory consisting of complexes with cohomology sheaves being coherent and concentrated in a bounded interval. A standard argument shows that $\D^b_G(X,\mc{A})$ is equivalent to the bounded derived category $\D^b(\Coh_{G} (X,\mc{A}))$.
If either $\mathcal{A}$ or $G$ or both are trivial, then we drop them from the notation obtaining respectively
\begin{itemize}
    \item
    the bounded derived category of equivariant sheaves $\D^b_G(X)$, see \cite[Appendix~A]{MR16} for some basic properties,
    \item
    the bounded derived category $\D^b(X,\mc{A})$ of the non-commutative scheme $(X,\mc{A})$ \cite[\S~3]{BDG17}, which is especially well-behaved in the case of $\mc{A}$ being a sheaf of separable algebras \cite[Appendix~D]{Kuz06}, in particular, admitting a description via twisted sheaves when $\mc{A}$ is a sheaf of Azumaya algebras \cite[Theorem~1.3.7]{Cal00},
    \item 
    the classical bounded derived category of coherent sheaves $\D^b(X)$, see e.g. \cite[Chapter~III]{Huy06}).
\end{itemize}
The forgetful functor $\QCoh_{G} (X,\mc{A}) \to \QCoh (X)$ is exact and gives rise to exact forgetful functors 
\[
\Coh_{G} (X,\mc{A}) \to \Coh (X),\quad \D(\QCoh_{G} (X,\mc{A}))\to \D(\QCoh (X)),\quad \D^b_G(X,\mc{A}) \to \D^b(X),
\]
For $X,Y\in \SmGk$, a $G$-equivariant morphism $f\colon Y\to X$ and $\mc{A}\in \CohAlg_G X$ we put 
\[
\Coh_{G} (Y,\mc{A}):=\Coh_{G} (Y,f^*\mc{A}),\quad \D^b_{G}(Y,\mc{A}):=\D^b_{G}(Y,f^*\mc{A}),
\]
and similarly for the quasi-coherent sheaves and its derived categories. In the paper we are interested only in the case of $\mc{A}\in\CohAlg_G X$ being an equivariant sheaf of \textit{separable algebras} in the sense of the following definition.
\end{definition}

\begin{definition}
    Let $R$ be a commutative ring and $A$ be an $R$-algebra. We say that $A$ is \textit{separable over $R$} (see \cite[\S~1]{AG60} or \cite[Chapter~III]{KO74}) if the surjection
    \[
    A\otimes_R A^{\op}\to A,\quad a\otimes b\mapsto ab,
    \]
    splits in the category of left $A\otimes_R A^{\op}$-modules, or, equivalently, if $A$ is a projective left $A\otimes_R A^{\op}$-module. A separable over $R$ algebra $A$ is an \textit{Azumaya algebra over $R$} if additionally $R$ coincides with the center of $A$. Alternatively, an $R$-algebra $A$ is an Azumaya algebra over $R$ if it is finitely generated and projective as an $R$-module and the homomorphism
    \[
    A\otimes_R A^{\op} \to \operatorname{End}_R A,\quad a\otimes b\mapsto (x\mapsto axb),
    \]
    is an isomorphism. See also \cite[Th\'eor\`eme~III.5.1]{KO74} for some other equivalent conditions. Over a field an Azumaya algebra is the same as a \textit{central simple algebra}, and a separable algebra is a finite product of central simple algebras over finite separable field extensions of the base field.
    
    For $X\in\Smk$ and $\mathcal{A}\in \CohAlg X$ we say that $\mathcal{A}$ is a \textit{sheaf of separable} (resp. \textit{Azumaya}) \textit{algebras} if for every $x\in X$ the stalk $\mc{A}_x$ is a separable (resp. Azumaya) algebra over $\mc{O}_{X,x}$. Alternatively, a coherent sheaf of algebras is a sheaf of 
    \begin{itemize}
        \item Azumaya algebras if it is isomorphic locally in \'etale topology to a sheaf of matrix algebras $M_{n}(\mc{O}_X)$, see e.g. \cite[Th\'eor\`eme~III.6.4]{KO74} or \cite[Proposition~IV.2.1]{Mi80},
        \item separable algebras if it is isomorphic locally in \'etale topology to a product of sheaves of matrix algebras $\prod_{i=1}^{m} M_{n_i}(\mc{O}_X)$, this follows from the Azumaya case combined with \cite[Theorem~2.3]{AG60} and \cite[Th\'eor\`eme~III.4.7]{KO74}.
    \end{itemize}
    For $X\in \SmGk$ we say that $\mc{A}\in \operatorname{CohAlg}_G X$ is an \textit{equivariant sheaf of separable} (resp. \textit{Azumaya}) \textit{algebras} if it becomes a sheaf of separable (resp. Azumaya) algebras after forgetting the equivariant structure. We denote $\SAlg_G X$ the full subcategory of $\CohAlg_G X$ consisting of equivariant sheaves of separable algebras.
\end{definition}

\begin{example}
    \begin{enumerate}
        \item An algebra $A$ over a field $k$ is separable if and only if $A\cong \prod_{i=1}^{m} M_{n_i}(D_i)$ with $D_i$ being finite dimensional division algebras over $k$ with the center $L_i$ being a separable field extension of $k$ \cite[Th\'eor\`eme~III.3.1]{KO74}. Similarly, an algebra $A$ over a field $k$ is an Azumaya algebra if and only if $A\cong M_{n}(D)$ with $D$ a finite dimensional central division algebra over $k$. 
        \item Let $X\in\SmGk$, then equivariant sheaves of Azumaya algebras $\mc{A},\mc{B}\in\SAlg_G X$ are called \textit{Brauer equivalent} if there exist equivariant vector bundles $\mc{V}_1,\mc{V}_2$ over $X$ such that $\mc{A}\otimes \underline{\operatorname{End}}\,\mc{V}_1\cong \mc{B}\otimes \underline{\operatorname{End}}\,\mc{V}_2$ as equivariant sheaves of algebras. The equivalence classes of equivariant sheaves of Azumaya algebras form the Brauer group $Br_G(X)$ with the group operation given by tensor product (see \cite[\S~1.2]{Gro68} for the non-equivariant case).
        \item The matrix algebra $M_n(k)$ equipped with the adjoint action of $\PGL_n$ is a $\PGL_n$-equivariant Azumaya algebra over $\Spec k$. Note that for $n\ge 2$ the algebra $M_n(k)$ is not an endomorphism algebra of a $\PGL_n$-representation, so it represents a non-trivial element in the $\PGL_n$-equivariant Brauer group of $\Spec k$. Moreover, there are no \'etale covers of $\Spec k$ that make $M_n(k)$ an endomorphism algebra of a $\PGL_n$-representation, so equivariant Azumaya algebras may be \'etale-locally non-trivial.
        \item Let $G$ be a split semisimple group over $k$, let $\tilde{G}$ be its simply connected cover and $M$ be an irreducible representation of $\tilde{G}$. Then the conjugation action of $\tilde{G}$ on $\End_k M$ descends to an action of $G$ giving rise to an Azumaya algebra $A_M \in \SAlg_{G} \Spec k$ such that $\res^G_{\tilde{G}} A_M\cong \iEnd M$, see Definition~\ref{def:Azumaya_equiv} for more details. These are closely related to the \textit{Tits algebras} introduced in \cite[\S~4]{Tits71}, see the discussion in Definition~\ref{def:equiv_Tits} below.
        \item Let $k=\mathbb{R}$ and $\mathrm{PGU}_n$ be the projective unitary group viewed as an algebraic group over $\mathbb{R}$. The matrix algebra $M_n(\mathbb{C})$ equipped with the conjugation action of $\mathrm{PGU}_n$ is a $\mathrm{PGU}_n$-equivariant separable algebra over $\mathbb{R}$ which is not Azumaya. Similar examples arise for outer forms of adjoint split semisimple groups, in particular, for adjoint quasi-split non-split semisimple groups.
    \end{enumerate}
\end{example}

For $X\in\SmGk$ and $\mc{A}\in \SAlg_G X$ the categories $\D^b_G(X,\mc{A})$ come with the usual package of $\otimes$, $\underline{\Hom}$, $f^*$ and $f_*$, projection formula and base change which we sum up in the proposition below.

\begin{proposition} \label{prop:functor_formalism}
\begin{enumerate}
    \item 
    Let $X,Y\in \SmGk$, $f\colon Y\to X$ be a $G$-equivariant morphism and $\mc{A}\in\SAlg_G X$. Then there is a pullback functor 
    \[
    f^*\colon \D^b_G(X,\mc{A}) \to \D^b_G(Y,\mc{A}).
    \]
    If $f$ is proper then there is also a direct image functor 
    \[
    f_*\colon \D^b_G(Y,\mc{A}) \to \D^b_G(X,\mc{A}),
    \]
    which is right adjoint to $f^*$. Forgetting the $G$-equivariant and $\mc{A}$-module structure these functors realise to the usual $f^*$ and $f_*$.
    \item (Base change). For a cartesian diagram
    \[
        \xymatrix{
        X\times_Z Y \ar[r]^(0.6){g'} \ar[d]_{f'} & X \ar[d]^f\\
        Y\ar[r]^g & Z 
        }
        \]
    in $\SmGk$ with $g$ being flat and $f$ being proper, and for $\mc{A}\in \SAlg_G Z$, one has a canonical isomorphism 
    \[  
        g^*\circ f_*\cong f'_* \circ (g')^* \colon \D^b_G(X,\mc{A})\to \D^b_G(Y,\mc{A}).
    \]
    
    \item 
    For $X\in \SmGk$ and $\mc{A}_1,\mc{A}_2,\mc{A}_3\in\SAlg_G X$ there is an associative bilinear pairing
    \[
    \D^b_G(X,\mc{A}_1\otimes \mc{A}_2^{\op}) \times \D^b_G(X,\mc{A}_2\otimes \mc{A}_3)\xrightarrow{-\otimes_{\mc{A}_2}-} \D^b_G(X,\mc{A}_1\otimes \mc{A}_3).
    \]
    Pullback functors are monoidal with respect to this pairing. For $N\in \D^b_G(X,\mc{A}_2\otimes \mc{A}_3)$ the functor $-\otimes_{\mc{A}_2}N$ admits a right adjoint
    \[
    \iHom_{\D^b_G(X,\mc{A}_3)}(N,-)\colon \D^b_G(X,\mc{A}_1\otimes \mc{A}_3)\to \D^b_G(X,\mc{A}_1\otimes \mc{A}_2^\op).
    \]
    Furthermore, for
    \[
    N^*:=\underline{\Hom}_{\D^b_G(X)}(N,\mc{O}_X) \in \D^b_G(X,\mc{A}_2^{\op}\otimes \mc{A}_3^{\op})
    \]
    and $L\in \D^b(X,\mc{A}_1\otimes\mc{A}_3)$ the natural morphism $N^*\otimes_{\mc{A}_3} L\to \iHom_{\D^b_G(X,\mc{A}_3)}(N,L)$ is an isomorphism.

    \item (Projection formula). For $X,Y\in \SmGk$, $\mc{A}_1,\mc{A}_2,\mc{A}_3\in\SAlg_G X$, a proper $G$-equivariant morphism $f\colon Y\to X$, and $M\in \D^b_G(X,\mc{A}_1\otimes \mc{A}_2^{\op})$, $N\in \D^b_G(Y,\mc{A}_2\otimes \mc{A}_3)$ there is an isomorphism
    \[
    M\otimes_{\mc{A}_2} f_*N\cong f_*(f^*M\otimes_{\mc{A}_2} N).
    \]
\end{enumerate}
\end{proposition}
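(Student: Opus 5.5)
The plan is to reduce everything to the standard formalism for $\D^b_G(X)$ recalled in \cite[Appendix~A]{MR16}, and for $\D^b(X,\mc{A})$ with $\mc{A}$ separable in \cite[Appendix~D]{Kuz06}. The key technical input is that for $\mc{A}\in\SAlg_G X$ the category $\QCoh_G(X,\mc{A})$ is a Grothendieck abelian category. Two structural facts about separable algebras drive the argument.

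\begin{itemize}
\item Every $\mc{A}$-module which is locally free as an $\mc{O}_X$-module is locally projective over $\mc{A}$. Indeed, the splitting of $\mc{A}\otimes\mc{A}^{\op}\to\mc{A}$ makes any $\mc{A}$-module $M$ a direct summand of $\mc{A}\otimes M$.
\item The forgetful functor to $\mc{O}_X$-modules has exact left and right adjoints $\mc{A}\otimes-$ and $\iHom(\mc{A},-)$.
\end{itemize}

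Hence $h$-injective and $h$-flat resolutions may be taken in $\QCoh_G(X,\mc{A})$, and they remain respectively $h$-injective and $h$-flat after forgetting the $\mc{A}$-structure, up to direct summands. This is exactly what is needed to see that the derived functors constructed below realise to the usual ones, and that every isomorphism can be checked after forgetting equivariance and the $\mc{A}$-action.

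\textbf{Part 1.} Since $f^*\mc{A}$ is again an equivariant sheaf of separable algebras, define $f^*M:=f^{-1}\mc{A}\text{-}$resolution-compatible pullback. Concretely, $f^*M=f^{-1}M\otimes_{f^{-1}\mc{O}_X}\mc{O}_Y$, computed on an $h$-flat resolution in $\QCoh_G(X,\mc{A})$. Such resolutions exist because $\mc{A}\otimes\mc{F}$ is $\mc{A}$-flat for any flat equivariant $\mc{F}$, and $X$ has the resolution property equivariantly, by Thomason for smooth $X$ with an affine group action \cite{Th87}. Boundedness and coherence follow from finite Tor-dimension on the smooth variety $X$.

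For $f_*$, take the right derived functor on $h$-injectives of $\QCoh_G(Y,f^*\mc{A})$. The key point is that the direct image of an $f^*\mc{A}$-module is an $\mc{A}$-module via $\mc{A}\to f_*f^*\mc{A}$. Coherence and boundedness come from properness, checked after forgetting. Adjunction is formal at the level of unbounded derived categories and restricts to $\D^b$.

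\textbf{Parts 2 and 4.} Both amount to exhibiting the natural morphisms, which are defined by adjunction exactly as in the commutative case, and then checking that they are isomorphisms. That check is done after applying the conservative forgetful functor to $\D(\QCoh X)$, where the statements are classical flat base change and the classical projection formula. Forgetting commutes with all the functors involved by Part 1.

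\textbf{Part 3.} Define $-\otimes_{\mc{A}_2}-$ via $h$-flat resolutions of the second argument as $\mc{A}_2$-modules. Associativity and monoidality of pullbacks are formal. The right adjoint is built from $\iHom$ over $\mc{A}_3$ with $h$-injective resolutions.

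The identification $N^*\otimes_{\mc{A}_3}L\cong\iHom(N,L)$ is checked locally after forgetting the equivariance. Since $X$ is smooth, $N$ is perfect as an $\mc{O}_X$-complex, and by the first bullet above it is perfect as an $\mc{A}_3$-complex. The morphism is an isomorphism for $N=\mc{A}_3$ and hence, by dévissage, for all perfect $N$.

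I expect the main obstacle to be precisely the existence of resolutions compatible with both the equivariance and the $\mc{A}$-structure. Once those resolutions are in hand, every remaining claim reduces to its non-equivariant commutative counterpart.
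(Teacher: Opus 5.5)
Your proposal takes essentially the same route as the paper. The paper's proof just declares the statement standard and obtains it by combining the equivariant formalism of \cite[Appendix~A]{MR16} with the formalism for sheaves of separable algebras of \cite[Appendix~D]{Kuz06}, and your resolutions in $\QCoh_G(X,\mc{A})$ plus conservativity of the forgetful functors are an explicit version of that combination.

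Two details should get a line each when written out:
\begin{enumerate}
\item In Part~4, after forgetting, the projection formula still involves $\otimes_{\mc{A}_2}$, so it is not literally the classical one. You need the same local d\'evissage of $M$ down to $\mc{A}_2$ that you use in Part~3.
\item In Part~3, the check at $N=\mc{A}_3$ relies on the equivariant bimodule isomorphism $\mc{A}_3\cong\iHom(\mc{A}_3,\mc{O}_X)$ coming from the reduced trace pairing. This isomorphism is what identifies the $\mc{O}_X$-dual $N^*$ with $\iHom_{\mc{A}_3}(N,\mc{A}_3)$.
\end{enumerate}
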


\begin{proof}
    This is standard. The passage from the classical non-equivariant setting to the equivariant one is discussed in \cite[Appendix~A]{MR16} and the passage from the classical setting to $\mc{A}$-modules is discussed in \cite[Appendix~D]{Kuz06} (note that although the section is called \textit{Azumaya algebraic varieties}, the author actually treats coherent sheaves of separable algebras). The general case follows by combining these two approaches.
\end{proof}

\subsection{Relative equivariantly exceptional collections and SODs}
\label{sec:rel_ex}
In this section we recall the notion of a semiorthogonal decomposition of a triangulated category and introduce relative equivariantly separable-exceptional collections, which are a mild generalization of fiberwise exceptional collections and give rise to semiorthogonal decompositions in families.

In this section $G$ is an affine algebraic group over $k$.

\begin{definition} \label{def:thetaX}
    
        
    Let $\mc{W},S\in\SmGk$, $\mc{A}\in \SAlg_G S$ and $f\colon \mc{W}\to S$ be a proper $G$-equivariant morphism. For $X\in \D^b_G(\mc{W},\mc{A})$ we denote $\theta_X\colon \mc{A}\to f_*\iEnd_{\D^b_{G}(\mc{W})} X$ the structure morphism
    corresponding to the identity morphism under the isomorphisms
    \begin{equation}
    \begin{multlined}
    \Hom_{\D^b_G(\mc{W},\mc{A})}(X,X) \cong \Hom_{\D^b_G(\mc{W},\mc{A})}(f^*\mc{A}\otimes_{f^*\mc{A}} X,X) \cong \\ \cong \Hom_{\D^b_G(\mc{W},\mc{A}\otimes \mc{A}^\op)}(f^*\mc{A},\iEnd_{\D^b_{G}(\mc{W})} X) 
    \cong \Hom_{\D^b_G(\mc{S},\mc{A}\otimes \mc{A}^\op)}(\mc{A},f_*\iEnd_{\D^b_{G}(\mc{W})} X).
    \end{multlined}
    \end{equation}
\end{definition}
    
\begin{definition} \label{def:relative_decomposition}    
    Let $\mc{W},S\in\SmGk$, $\mc{A}\in \SAlg_G S$ and $f\colon \mc{W}\to S$ be a proper $G$-equivariant morphism. We say that $X\in \D^b_G(\mc{W},\mc{A})$ is \textit{$S$-relative equivariantly $\mc{A}$-exceptional} (or \textit{$S$-relative equivariantly separable-exceptional}) if the structure morphism 
    \[
    \theta_X \colon \mc{A}\to f_*\iEnd_{\D^b_{G}(\mc{W})} X
    \]
    of Definition~\ref{def:thetaX} is an isomorphism. Furthermore, $X$ is called
    \begin{itemize}
        \item \textit{$S$-relative equivariantly Azumaya-exceptional} if $\mc{A}$ is a $G$-equivariant sheaf of Azumaya algebras, 
        \item \textit{$S$-relative equivariantly \'etale-exceptional} if $\mc{A}$ is a $G$-equivariant sheaf of commutative separable algebras,
        \item \textit{$S$-relative equivariantly exceptional} if $\mc{A}=\mc{O}_S$.
    \end{itemize}
    \noindent
    A sequence of objects $X_i\in \D^b_G(\mc{W},\mc{A}_i)$, $1\le i\le n$, is called an \textit{$S$-relative equivariantly separable-exceptional (resp. Azumaya-exceptional, resp. \'etale-exceptional, resp. exceptional) collection in $\D^b_G(\mc{W})$} if 
    \begin{enumerate}
        \item $X_i$ is $S$-relative equivariantly separable-exceptional (resp. Azumaya-exceptional, resp. \'etale-exceptio\-nal, resp. exceptional) for every $1\le i\le n$,
        \item $f_* \iHom_{\D^b_G(\mc{W})}(X_i,X_j) =0$ for every $1\le j<i\le n$.
    \end{enumerate}
    \noindent
    An $S$-relative equivariantly separable-exceptional collection $X_i\in \D^b_G(\mc{W},\mc{A}_i)$, $1\le i\le n$, is \textit{full} if 
    \[
    \D^b_G(\mc{W}) = \hull \left( \D^b_G(S,\mc{A}_1^\op) \otimes_{\mc{A}_1} X_1,\hdots, \D^b_G(S,\mc{A}_n^\op) \otimes_{\mc{A}_n} X_n\right),
    \]
    where on the right stands the smallest triangulated subcategory of $\D^b_G(\mc{W})$ containing $M_i\otimes_{\mc{A}_i} X_i$ for all $M_i\in \D^b_G(S,\mc{A}_i^\op)$, $1\le i \le n$.

    If $S=\Spec k$ (resp. $G$ is a trivial group) then a (full) $S$-relative equivariantly separable-exceptional collection is called a (full) equivariantly (resp. $S$-relative) separable-exceptional collection, and similarly for Azumaya-exceptional, \'etale-exceptional and exceptional collections.    
\end{definition}

\begin{remark}
    Let $G$ be the trivial group and $S=\Spec k$, in this case the above definition of a (full) $S$-relative equivariantly exceptional collection coincides with the usual notion of a (full) exceptional collection (see e.g. \cite[Definition~1.57]{Huy06}), while a separable-exceptional collection is a variant of the notion of semi-exceptional collection of \cite[Definition~1.9]{O20} (precisely, we ask that the semisimple algebras of loc. cit. are in fact separable) and also is a mild generalization of \cite[Definitions~3.2 and~3.3]{Nov23}, with the difference being that we allow arbitrary separable algebras rather than division algebras as in loc. cit. In the non-equivariant case the notion of a relative exceptional collection was also explicitly given in \cite[Definition~3.19]{BLMNPS21}, but was already implicitly used in \cite[Theorem~3.1]{Sam07}, where it is introduced in an equivalent form of a fiberwise exceptional collection.
\end{remark}


    \begin{definition}[{\cite[\S~1]{BK89},\cite[\S~1.4]{Huy06}}]
        Let $\mc{D}$ be a triangulated category and $\mc{D}'\subseteq \D$ be a full triangulated subcategory. We say that $\mc{D}'$ is \textit{right admissible} (resp. \textit{left admissible}) if the inclusion $i_*\colon \mc{D}'\to \D$ admits a right adjoint $i^!\colon \mc{D}\to \D'$ (resp. a left adjoint $i^*\colon \mc{D}\to \D'$). A subcategory that is both right and left admissible is \textit{admissible}. We denote $(\D')^\perp$ the full subcategory of $\D$ consisting of those objects $A$ such that $\Hom_\D(B,A)=0$ for all $B\in \D'$. Similarly, ${}^\perp(\D')$ is the full subcategory of $\D$ consisting of those objects $A$ such that $\Hom_\D(A,B)=0$ for all $B\in \D'$.
    
        Let $\mc{D}$ be a triangulated category and $\D_1,\D_2\subseteq \mc{D}$ be full triangulated subcategories. We say that
        \[
            \mc{D}= \langle \D_1,\D_2\rangle
        \]
        is a \textit{semiorthogonal decomposition} if 
        \begin{enumerate}
            \item for all $M_1\in \D_1$, $M_2\in \D_2$, one has $\Hom(M_2,M_1)=0$,
            \item $\D=\hull (\D_1,\D_2)$.
        \end{enumerate}
        Note that in the presence of the condition (1) the condition (2) is equivalent to the following: for every $M\in \D$ there exists a distinguished triangle
        \[
        M_2 \to M\to M_1 \to M_2[1]
        \]
        with $M_1\in \D_1$ and $M_2\in \D_2$. Moreover, this triangle is essentially unique and functorial in $M$. It follows that $\mc{D}_1$ is left admissible with the left adjoint to the inclusion $j_*\colon \D_1\to \D$ given by $j^*(M):=M_1$ and $\D_2$ is right admissible with the right adjoint to the inclusion $i_*\colon \D_2\to \D$ given by $i^!(M):=M_2$. Furthermore, it also follows that $\D_1=(\D_2)^\perp$ and $\D_2={}^\perp(\D_1)$, and one can show that every left (resp. right) admissible subcategory $\D'\subseteq \D$ defines a semiorthogonal decomposition $\D=\langle \D', {}^\perp(\D')\rangle$ (resp. $\D=\langle (\D')^\perp ,\D'\rangle$).
        
        Inductively, for a sequence $\mc{D}_1,\mc{D}_2,\hdots, \mc{D}_n \subseteq \mc{D}$ of full triangulated subcategories we say that
        \[
        \mc{D}= \langle \mc{D}_1,\mc{D}_2,\hdots,\mc{D}_n\rangle
        \]
        is a \textit{semiorthogonal decomposition} if
        \begin{enumerate}
            \item for all $i>j$ and $M_i\in \mc{D}_i$, $M_j\in \mc{D}_j$, one has $\Hom(M_i,M_j)=0$,
            \item $\D=\hull (\D_1,\D_2,\hdots,\D_n)$.
        \end{enumerate}
        Again, in the presence of the condition (1) the condition (2) is equivalent to the following: for every $M\in \D$ there exists a filtration
        \[
        0=M_{\ge n+1} \xrightarrow{f_n} M_{\ge n} \xrightarrow{f_{n-1}}\hdots \xrightarrow{f_{2}} M_{\ge 2}\xrightarrow{f_1} M_{\ge 1}=M
        \]
        such that $Cone(f_i)\in \D_i$ for every $1\le i\le n$. This filtration is essentially unique and functorial, and it follows that $\mc{D}_i\subseteq \hull\left(\mc{D}_1,\D_2\hdots, \D_i \right)$ is right admissible for all $i$.
    \end{definition}

    \begin{remark}
        There is some discrepancy regarding the admissibility condition for semiorthogonal decompositions between \cite[Definition~1.59]{Huy06} and \cite{BK89}, \cite[Definition~3.1]{BO02} and \cite[\S~1.1]{Kuz14}. The possible reason is that the subcategories appearing in examples of semiorthogonal decompositions of geometric origin are usually admissible (and saturated) by \cite[Proposition~2.6, Theorem~2.14]{BK89} and~\cite[Theorem~3.1.5]{BVdB03}, and all left (or right) admissible subcategories in $\D^b(X)$ for a smooth projective $X$ are automatically admissible (see e.g. \cite[\S~2.2]{Kuz11}).
    \end{remark}

    \begin{lemma} \label{lem:SOD_direct_sum}
        Let $\D=\langle \D_1,\D_2\rangle$ be a semiorthogonal decomposition with 
        \[
            j^*\colon \D\leftrightarrows \D_1\colon j_*,\quad i_*\colon \D_2\leftrightarrows \D\colon i^!
        \]
        being the corresponding adjunctions. Suppose that $\D=\C_1\oplus \C_2$. Then for $l=1,2$ one has
        \[
        j_*j^*(\C_l)\subseteq \C_l,\quad i_*i^!(\C_l)\subseteq \C_l.
        \]
    \end{lemma}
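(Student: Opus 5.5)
We decompose an object of one summand using the semiorthogonal decomposition, split the resulting triangle along the direct sum $\D=\C_1\oplus\C_2$, and then use uniqueness of the decomposition triangle to identify its pieces.

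\textbf{Setup.} Recall that $\D=\C_1\oplus\C_2$ means the following: every object is a direct sum of an object of $\C_1$ and an object of $\C_2$, and there are no morphisms (in either direction) between $\C_1$ and $\C_2$. Both $\C_l$ are full triangulated subcategories closed under direct summands. Fix $M\in\C_1$; the case $\C_2$ is symmetric. Take the decomposition triangle
\[
i_*i^!M\to M\to j_*j^*M\to i_*i^!M[1].
\]
Write $i_*i^!M=A_1\oplus A_2$ and $j_*j^*M=B_1\oplus B_2$ with $A_l,B_l\in\C_l$.

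\textbf{Splitting the triangle.} The morphisms in the triangle respect the direct sum decomposition, since there are no Homs between $\C_1$ and $\C_2$. Hence each morphism is diagonal. Moreover, a triangulated direct sum decomposition splits triangles: the triangle is the direct sum of a triangle $A_1\to M\to B_1\to$ in $\C_1$ and a triangle $A_2\to 0\to B_2\to$ in $\C_2$. To justify this concretely, complete $A_1\to M$ to a triangle in $\C_1$ and $A_2\to 0$ to a triangle in $\C_2$. Their direct sum is distinguished and has the same first morphism as the original triangle. By uniqueness of cones up to (non-canonical) isomorphism we get $B_1\oplus B_2\cong \mathrm{Cone}(A_1\to M)\oplus A_2[1]$. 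This is the step requiring care, and the cleaner route is the next paragraph.

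\textbf{Cleaner route via semiorthogonality.} First note that $A_l$ and $B_l$ lie in $\D_2$ and $\D_1$ respectively, since these subcategories are closed under direct summands. Indeed, $\D_1=(\D_2)^\perp$ and $\D_2={}^\perp(\D_1)$ are defined by orthogonality conditions, which pass to summands. Now the triangle $A_1\to M\to C\to A_1[1]$ in $\C_1$, where $C=\mathrm{Cone}(A_1\to M)$, is a decomposition triangle for $M$ provided $C\in\D_1$. We have $C\in\D_1$ because $C\oplus A_2[1]\cong B_1\oplus B_2\in\D_1$, so $C$ is a summand of an object of $\D_1$. By uniqueness of the decomposition triangle, $i_*i^!M\cong A_1\in\C_1$ and $j_*j^*M\cong C\in\C_1$.

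Since $\C_1$ is strictly full, we conclude $i_*i^!M\in\C_1$ and $j_*j^*M\in\C_1$, as required.
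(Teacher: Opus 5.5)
Your proof is correct. The paper gives no argument of its own (only ``Straightforward''), so there is no written proof to match. Note that your ``cleaner route'' paragraph does not stand on its own. It uses the isomorphism $B_1\oplus B_2\cong C\oplus A_2[1]$ from the splitting paragraph, which relies on two facts: a direct sum of distinguished triangles is distinguished, and cones of the same morphism are isomorphic. Taken together, the two paragraphs give a complete proof.

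A shorter route, probably what the authors had in mind, uses only the universal properties of the counit and unit.

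\emph{The $i$-component.} For $M\in\C_1$ write $i_*i^!M=A_1\oplus A_2$ with $A_l\in\C_l$. Then $A_2\in\D_2$, since $\D_2={}^\perp(\D_1)$ is closed under direct summands. Composition with the counit gives a bijection $\Hom(A_2,i_*i^!M)\to\Hom(A_2,M)=0$. Hence the split inclusion $A_2\to i_*i^!M$ is zero, and so $A_2=0$.

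\emph{The $j$-component.} Write $j_*j^*M=B_1\oplus B_2$, so $B_2\in\D_1$. Precomposition with the unit gives a bijection $\Hom(j_*j^*M,B_2)\to\Hom(M,B_2)=0$. Hence the split projection onto $B_2$ is zero, and so $B_2=0$.

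This argument needs no cones, no lemma on direct sums of triangles, and no appeal to uniqueness of the decomposition triangle. Your approach instead rebuilds the decomposition triangle of $M$ inside $\C_1$ and then invokes its essential uniqueness. That is equally valid, but it costs some extra triangulated bookkeeping.
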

    \begin{proof}
        Straightforward.
    \end{proof}

    \begin{definition}[{\cite[\S~2.4]{Kuz14}}]
        Let $\mc{W},S\in\SmGk$, $f\colon \mc{W}\to S$ be a proper equivariant morphism. We say that a full triangulated subcategory $\mc{D}\subseteq \D^b_G(\mc{W})$ is \textit{$S$-linear} if for every $X\in \D$ and $M\in \D^b_G(\mc{S})$ one has $(f^*M)\otimes X\in \mc{D}$. A semiorthogonal decomposition $\D^b_G(\mc{W})=\langle \D_1,\D_2,\hdots,\D_n\rangle$ is \textit{$S$-linear} if all $\D_i$ are $S$-linear subcategories.
    \end{definition}        

    \begin{proposition} \label{prop:SOD_from_collection}
        Let $\mc{W},S\in\SmGk$, $f\colon \mc{W}\to S$ be a proper equivariant morphism and $X_i\in \D^b_G(\mc{W},\mc{A}_i)$, $1\le i\le n$, be an $S$-relative equivariantly separable-exceptional collection. Consider the functors
        \[
        \Phi_i\colon \D^b_G(S,\mc{A}_i^{\op}) \to \D^b_G(\mc{W}), \quad M\mapsto f^*M\otimes_{f^*\mc{A}_i} X_i.
        \]
        Then the following holds.
        \begin{enumerate}
            \item The functors $\Phi_i$ are fully faithful.
            \item
            For all $i>j$ and $M_i\in \Phi_i(\D^b_G(S,A_i^{\op}))$, $M_j\in \Phi_j(\D^b_G(S,A_j^{\op}))$ one has 
            \[
            \Hom_{\D^b_G(\mc{W})}(M_i,M_j)=0.
            \]
            \item The subcategories $\Phi_i(\D^b_G(S,\mc{A}_i^{\op}))\subseteq \D^b_G(\mc{W})$ are $S$-linear and admissible.
            \item The subcategory
            \[
            \hull\left( \Phi_1(\D^b_G(S,\mc{A}_1^{\op})),\Phi_2(\D^b_G(S,\mc{A}_2^{\op})),\hdots, \Phi_n(\D^b_G(S,\mc{A}_n^{\op})) \right) \subseteq \D^b_G(\mc{W})
            \]
            is $S$-linear and admissible. If the collection is full then there is an $S$-linear semiorthogonal decomposition
            \[
            \D^b_G(\mc{W})=\langle \Phi_1(\D^b_G(S,\mc{A}_1^{\op})),\Phi_2(\D^b_G(S,\mc{A}_2^{\op})),\hdots, \Phi_n(\D^b_G(S,\mc{A}_n^{\op}))\rangle.
            \]
        \end{enumerate}
    \end{proposition}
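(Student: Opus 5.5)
The basic tool is the adjunction between $\Phi_i$ and $f_*$ combined with the projection formula of Proposition~\ref{prop:functor_formalism}. Step one is to identify the right adjoint of $\Phi_i$ explicitly as
\[
\Phi_i^!\colon \D^b_G(\mc{W})\to \D^b_G(S,\mc{A}_i^{\op}),\quad Y\mapsto f_*\iHom_{\D^b_G(\mc{W})}(X_i,Y).
\]
This comes from composing two adjunctions: the adjunction between $f^*$ and $f_*$ (part (1)), and the adjunction between $-\otimes_{f^*\mc{A}_i}X_i$ and $\iHom_{\D^b_G(\mc{W})}(X_i,-)$ (part (3), with $\mc{A}_1=\mc{O}$, $\mc{A}_2=\mc{A}_i^{\op}$, $\mc{A}_3=\mc{O}$, after swapping op's). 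Both sides are bounded coherent: $\iHom(X_i,Y)\cong X_i^*\otimes Y$ by part (3), and $f$ is proper.

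For $M\in\D^b_G(S,\mc{A}_j^{\op})$, the projection formula (part (4)) together with the identification $\iHom(X_i,f^*M\otimes_{\mc{A}_j}X_j)\cong X_i^*\otimes f^*M\otimes_{\mc{A}_j}X_j$ gives
\[
\Phi_i^!\Phi_j(M)\cong M\otimes_{\mc{A}_j} f_*\iHom(X_i,X_j).
\]
This is the step I expect to need the most care. One has to check that the isomorphisms of part (3) are compatible with the bimodule structures, and that the resulting composite $M\to \Phi_i^!\Phi_i M$ is the unit of the adjunction. In the case $i=j$ the unit should be $M\otimes_{\mc{A}_i}\theta_{X_i}$. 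I would verify this by naturality in $M$, reducing to $M=\mc{A}_i$, where it is exactly the definition of $\theta_{X_i}$ in Definition~\ref{def:thetaX}.

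With this formula the first two claims follow:
\begin{enumerate}
\item Since $\theta_{X_i}$ is an isomorphism, the unit is an isomorphism, so $\Phi_i$ is fully faithful.
\item For $j<i$ we have $f_*\iHom(X_i,X_j)=0$, hence $\Phi_i^!\Phi_j=0$. Therefore $\Hom(\Phi_i M,\Phi_j N)\cong\Hom(M,\Phi_i^!\Phi_j N)=0$.
\end{enumerate}

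For claim (3), $S$-linearity is immediate because $f^*(N)\otimes f^*M\otimes_{\mc{A}_i}X_i\cong f^*(N\otimes M)\otimes_{\mc{A}_i}X_i$, and pullback is monoidal. Right admissibility follows from the existence of $\Phi_i^!$ together with full faithfulness. For left admissibility I would construct a left adjoint using relative Grothendieck duality for the proper morphism $f$ between smooth varieties: $f^!=f^*\otimes\omega_f[\dim]$, whose equivariant and $\mc{A}$-linear versions I would take as standard in the same way as Proposition~\ref{prop:functor_formalism}. This gives $\Phi_i^*(Y)=\big(f_*\iHom(Y,X_i\otimes\omega_f[d])\big)^{\vee}$, with the dual taken over $\mc{A}_i$. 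Here separability of $\mc{A}_i$ over smooth $S$ is what ensures that $\D^b_G(S,\mc{A}_i^{\op})$ is closed under the duality, since modules have finite homological dimension.

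Claim (4) is then formal. By claims (1)–(3), the admissible semiorthogonal pieces generate a subcategory which is admissible by the standard gluing argument of \cite{BK89}: inductively, $\langle\mc{C},\D'\rangle$ with both pieces admissible and $\D'\subseteq\mc{C}^\perp$ is again admissible. This subcategory is also $S$-linear, since triangulated hulls of $S$-linear subcategories are $S$-linear. If the collection is full, this hull is all of $\D^b_G(\mc{W})$; together with claim (2) this is exactly the definition of a semiorthogonal decomposition.
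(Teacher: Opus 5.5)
Your treatment of claims (1), (2), right admissibility and (4) is essentially the paper's argument. The paper computes $\Hom(\Phi_i M,\Phi_j N)\cong\Hom(M\otimes N^*,f_*\iHom(X_i,X_j))$ directly instead of going through $\Phi_i^!\Phi_j$ and the unit. Its right-admissibility triangle uses exactly $f_*\iHom(X_i,K)=\Phi_i^!(K)$. Your check that the unit is $M\otimes_{\mc{A}_i}\theta_{X_i}$ is, if anything, slightly more careful than the paper.

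\textbf{The gap is the left adjoint.} Your formula $\Phi_i^*(Y)=\big(f_*\iHom(Y,X_i\otimes\omega_f[d])\big)^{*}$ applies duality twice. By Grothendieck duality, $(f_*E)^*\cong f_*(E^*\otimes\omega_f[d])$, so your expression is isomorphic to $f_*\iHom(X_i,Y)=\Phi_i^!(Y)$, which is the \emph{right} adjoint. Unwinding the adjunctions, what your functor actually satisfies is $\Hom(\Phi_i^*(Y),N)\cong\Hom(Y,\Phi_i(N)\otimes\omega_f[d])$. So it is left adjoint to $\Phi_i(-)\otimes\omega_f[d]$, whose image is in general different from that of $\Phi_i$.

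A concrete test: take $S=\Spec k$, $G$ trivial, $\mc{W}=\mathbb{P}^1$ and $X_1=\mc{O}$. By Serre duality your formula gives $\Phi^*(Y)\cong\mathrm{R}\Gamma(Y)$. This vanishes for $Y=\mc{O}(-1)$, yet $\Hom(\mc{O}(-1),\Phi(k))=\Hom(\mc{O}(-1),\mc{O})\neq0$. So, as written, the left-admissibility step fails.

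\textbf{The fix is to drop the twist.} Take $\Phi_i^*(Y):=\big(f_*\iHom(Y,X_i)\big)^*$, which is what the paper's ``dual argument'' uses. Via Grothendieck duality this is equivalently $f_*(\iHom(X_i,Y)\otimes\omega_f[d])$. Verifying it needs no duality theory at all, only:
\begin{itemize}
\item dualizability of objects on the smooth $\mc{W}$;
\item the adjunction $f^*\dashv f_*$;
\item the projection formula;
\item part (3) of Proposition~\ref{prop:functor_formalism} on $S$.
\end{itemize}
With these one gets
\[
\Hom(Y,f^*N\otimes_{\mc{A}_i}X_i)\cong\Hom(\mc{O}_{\mc{W}},f^*N\otimes_{\mc{A}_i}\iHom(Y,X_i))\cong\Hom(\mc{O}_S,N\otimes_{\mc{A}_i}f_*\iHom(Y,X_i))\cong\Hom\big((f_*\iHom(Y,X_i))^*,N\big).
\]
This route also removes the need to postulate an equivariant, $\mc{A}$-linear version of Grothendieck duality, which is not part of the formalism the paper sets up.
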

    \begin{proof}
        The reasoning is an adaptation of the standard one (see e.g. \cite[{\S~1.4}]{Huy06}), but we provide it for the sake of completeness.
    
        Below we mostly omit $f^*$ from the notation. Let $M\in \D^b_G(S,\mc{A}_i^{\op})$, $N\in \D^b_G(S,\mc{A}_j^{\op})$, then
        \begin{multline*}
        \Hom_{\D^b_G(\mc{W})} (M\otimes_{\mc{A}_i} X_i,N\otimes_{\mc{A}_j} X_j) 
        \cong \Hom_{\D^b_G(\mc{W},\mc{A}_i^\op\otimes \mc{A}_j)} ( f^*(M\otimes N^*) ,X_j\otimes X_i^*) \cong \\
        \cong \Hom_{\D^b_G(S,\mc{A}_i^\op\otimes \mc{A}_j)} (M\otimes N^* , f_*(X_j\otimes X_i^*)) 
        \cong \Hom_{\D^b_G(S,\mc{A}_i^\op\otimes \mc{A}_j)} (M\otimes N^* , f_*\iHom_{\D^b_G(\mc{W})}(X_i,X_j)).
        \end{multline*}
        
        1. If $i=j$ then we further have
        \begin{multline*}
        \Hom_{\D^b_G(S,\mc{A}_i^\op\otimes \mc{A}_i)} (M\otimes N^* , f_*\iHom_{\D^b_G(\mc{W})}(X_i,X_i))= \Hom_{\D^b_G(S,\mc{A}_i^\op\otimes \mc{A}_i)} (M\otimes N^* , \mc{A}_i)
        \cong \\
        \cong\Hom_{\D^b_G(S,\mc{A}_i^\op)} (M , N),
        \end{multline*}
        hence the functor $\Phi_i$ is fully faithful.
        
        2. If $i>j$ then $f_*\iHom_{\D^b_G(\mc{W})}(X_i,X_j)=0$, hence
        \[
        \Hom_{\D^b_G(S,\mc{A}_i^\op\otimes \mc{A}_j)} (M\otimes N^* , f_*\iHom_{\D^b_G(\mc{W})}(X_i,X_j)) =0
        \]
        and we obtain the semiorthogonality property.

        3. Let $M\in \D^b_G(S,\mc{A}_i^{\op})$ and $N\in \D^b_G(S)$, then
        \[
        N\otimes (M\otimes_{\mc{A}_i} X_i) \cong (N\otimes M)\otimes_{ A_i} X_i,
        \]
        hence $\Phi_i (\D^b_G(S,\mc{A}_i^{\op}))$ is an $S$-linear subcategory of $\D^b_G(\mc{W})$.
        
        Recall that by \cite[Remark~1.43.iii)]{Huy06} right admissibility of the subcategory $\Phi_i (\D^b_G(S,\mc{A}_i^{\op}))\subseteq \D^b_G(\mc{W})$ is equivalent to the property that for every $K\in \D^b_G(\mc{W})$ there exists $M\in \D^b_G(S,\mc{A}_i^\op)$ and a distinguished triangle
        \[
        M\otimes_{\mc{A}_i} X_i \to K \to L \to M\otimes_{\mc{A}_i} X_i[1]
        \]
        such that for every $N\in \D^b_G(S,\mc{A}_i^\op)$ one has $\Hom_{\D^b_G(\mc{W})}(N\otimes_{\mc{A}_i} X_i, L)=0$. This can be shown in a way similar to the proof of \cite[Lemma~1.58]{Huy06} as follows. Put
        \[
        M:=f_*\iHom_{\D^b_G(\mc{W})}(X_i,K) \in \D^b_G(S,\mc{A}_i^\op),
        \]
        and consider a distinguished triangle
        \[
        M\otimes_{\mc{A}_i} X_i \xrightarrow{\phi} K \to L \to M\otimes_{\mc{A}_i} X_i[1]
        \]
        with the first morphism
        \[
        M\otimes_{\mc{A}_i} X_i = f^*f_*\iHom_{\D^b_G(\mc{W})}(X_i,K) \otimes_{\mc{A}_i} X_i\xrightarrow{\phi} K
        \]
        arising from the counits of the adjunctions. Then for $N\in \D^b_G(S,\mc{A}_i^\op)$ using the first claim of the Proposition and adjunctions we obtain
        \begin{multline*}
        \Hom_{\D^b_G(\mc{W})}(N\otimes_{\mc{A}_i} X_i, M\otimes_{\mc{A}_i} X_i)
        \cong\Hom_{\D^b_G(S,\mc{A}_i^{\op})}(N , M)
        =\Hom_{\D^b_G(S,\mc{A}_i^{\op})}(N , f_*\iHom(X_i,K))
        \cong\\ 
        \cong \Hom_{\D^b_G(\mc{W},\mc{A}_i^{\op})}(f^*N , \iHom(X_i,K)) 
        \cong\Hom_{\D^b_G(\mc{W})}(N\otimes_{\mc{A}_i} X_i, K).
        \end{multline*}
        One can check that this isomorphism is given by composition with $\phi$. Hence $\Hom_{\D^b_G(\mc{W})}(N\otimes_{\mc{A}_i} X_i, L)=0$ and thus $\Phi_i (\D^b_G(S,\mc{A}_i^{\op}))\subseteq \D^b_G(\mc{W})$ is right admissible. A dual argument for $M:=(f_*\iHom_{\D^b_G(\mc{W})}(K,X_i))^*$ shows that $\Phi_i (\D^b_G(S,\mc{A}_i^{\op}))\subseteq \D^b_G(\mc{W})$ is left admissible.
    
        4. The triangulated hull of $S$-linear subcategories is clearly $S$-linear, and the triangulated hull of admissible semiorthogonal subcategories is admissible \cite[Proposition~1.12]{BK89}. The generation property of the semiorthogonal decomposition is precisely the fullness property of the collection.
    \end{proof}

    The following lemma is well-known.
    \begin{lemma} \label{lem:generetor}
         Let $\mc{W},S\in\Smk$, $f\colon \mc{W}\to S$ be a smooth proper morphism and $\D\subseteq \D^b(\mc{W})$ be an $S$-linear right admissible subcategory. Suppose that there exists an $f$-relatively ample line bundle $\mc{L}\in \operatorname{Pic}(\mc{W})$ such that for every $n\in \mathbb{Z}$ one has $\mc{L}^{\otimes n} \in \D$. Then $\D=\D^b(\mc{W})$.
    \end{lemma}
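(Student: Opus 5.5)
Since $\D$ is right admissible, the plan is to use the semiorthogonal decomposition $\D^b(\mc{W})=\langle \D^\perp,\D\rangle$ and show that the orthogonal $\D^\perp$ vanishes. So fix $K\in \D^\perp$, meaning $\Hom(Y,K)=0$ for all $Y\in\D$; the goal is to prove $K=0$.

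\textbf{Step 1: reduce orthogonality to $\mc{O}_S$-linear Homs.} By $S$-linearity of $\D$, for every $M\in\D^b(S)$, every $n\in\mathbb{Z}$ and every shift $m$, we get
\[
0=\Hom(f^*M\otimes\mc{L}^{n},K[m])\cong\Hom_{\D^b(S)}\bigl(M,f_*(K\otimes\mc{L}^{-n})[m]\bigr).
\]
Taking $M$ to range over $\D^b(S)$, in particular $M=f_*(K\otimes\mc{L}^{-n})$ itself (which lies in $\D^b(S)$ because $f$ is proper), gives $f_*(K\otimes\mc{L}^{-n})=0$ for all $n\in\mathbb{Z}$.

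\textbf{Step 2: relative ampleness forces $K=0$.} Suppose $K\ne 0$, and let $\mc{H}^q(K)$ be its top nonzero cohomology sheaf. Relative ampleness gives, for $n\gg0$, that $\mc{L}^{n}$ kills the higher direct images and makes the adjunction map surjective:
\[
R^{>0}f_*(\mc{H}^p(K)\otimes\mc{L}^{n})=0 \text{ for all } p,\qquad f^*f_*(\mc{H}^q(K)\otimes\mc{L}^n)\to\mc{H}^q(K)\otimes\mc{L}^n \text{ surjective}.
\]
Only finitely many cohomology sheaves occur, since $K$ is bounded. Base-change the question locally on $S$ to affine opens, where this is the usual Serre vanishing and global generation. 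The spectral sequence $R^if_*\mc{H}^p\Rightarrow \mc{H}^{i+p}(f_*)$ then degenerates, so $\mc{H}^q(f_*(K\otimes\mc{L}^n))=f_*(\mc{H}^q(K)\otimes\mc{L}^n)$. This sheaf is nonzero because its pullback surjects onto the nonzero sheaf $\mc{H}^q(K)\otimes\mc{L}^n$. Using $\mc{L}^{-n}$ versus $\mc{L}^{n}$ does not matter, since Step 1 covers all $n\in\mathbb{Z}$. This contradicts Step 1, so $K=0$ and therefore $\D=\D^b(\mc{W})$.

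The main technical point is Step 2: making relative Serre vanishing and global generation uniform over $S$. This works because $f$ is proper and $S$ is quasi-compact, so finitely many affine opens suffice. Smoothness is not really needed beyond ensuring that $f_*$ preserves $\D^b$ and that the adjunctions used are available in the bounded setting.
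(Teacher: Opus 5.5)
Your proof is correct, but it is organized differently from the paper's.

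\textbf{The paper's route.} The paper argues fiberwise. For a closed point $s\in S$, with inclusions $i\colon\{s\}\to S$ and $\tilde\imath\colon\mathcal{W}_s\to\mathcal{W}$, it applies $S$-linearity only to the specific objects $f^*\bigl(i_*(\mathcal{O}_s)^*\bigr)\otimes\mathcal{L}^{\otimes n}[m]\in\mathcal{D}$. By adjunction and the projection formula this gives $\Hom(\tilde\imath^*\mathcal{L}^{\otimes n}[m],\tilde\imath^*K)=0$ on the fiber. Since $\mathcal{W}_s$ is smooth projective with $\tilde\imath^*\mathcal{L}$ ample, Huybrechts' Corollary~3.19 gives $\tilde\imath^*K=0$. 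The vanishing of all derived fibers $K(x)$ at closed points $x$ then forces $K=0$.

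\textbf{Your route.} You use $S$-linearity against all of $\D^b(S)$ at once. Via $f^*\dashv f_*$, this upgrades orthogonality to $Rf_*(K\otimes\mathcal{L}^{\otimes n})=0$ for every $n\in\mathbb{Z}$. You then run the Serre vanishing and global generation argument (the one underlying Huybrechts' result) relatively over $S$, using the relative Serre theorem (EGA~III, 2.2.1) on a finite affine cover.

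\textbf{Comparison.} Your argument avoids both the restriction to fibers and the final Nakayama-type step, and it never uses smoothness of the fibers. The paper's argument instead replaces the relative Serre theorem by a standard absolute statement on a single smooth projective variety.

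One minor correction to your closing remark: it is properness of $f$, not smoothness, that makes $f_*$ preserve $\D^b$. Flatness of $f$ (or smoothness of $S$) is what keeps $f^*$ inside $\D^b(\mathcal{W})$.
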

    \begin{proof}
        Since $\D\subseteq \D^b(\mc{W})$ is a right admissible subcategory, we have a semiorthogonal decomposition $\D^b(\mc{W})=\langle \D^{\perp} ,\D\rangle$.
        Let $s\in S$ be a closed point, $\mc{W}_s$ be the fiber over $s$, and $i\colon \{s\}\to S$ and $\tilde{\imath}\colon \mc{W}_s\to \mc{W}$ be the closed embeddings. For $M\in \D^{\perp}$ applying the adjunction $\tilde{\imath}^*\dashv \tilde{\imath}_*$, projection formula and isomorphisms 
        \[
        (\tilde{\imath}_*\tilde{\imath}^*(\mc{O}_{\mc{W}}))^*\cong (f^*i_*(\mc{O}_{s}))^* \cong f^*(i_*(\mc{O}_{s})^*)
        \]
        we obtain
        \begin{multline*}
        \Hom_{\D^b(\mc{W}_s)} (\tilde{\imath}^*\mc{L}^{\otimes n}[m],\tilde{\imath}^*M) \cong \Hom_{\D^b(\mc{W})} (\mc{L}^{\otimes n}[m],\tilde{\imath}_*\tilde{\imath}^*M)
        \cong \Hom_{\D^b(\mc{W})} (\mc{L}^{\otimes n}[m],\tilde{\imath}_*\tilde{\imath}^*(\mc{O}_{\mc{W}})\otimes M) \cong
        \\
        \cong
        \Hom_{\D^b(\mc{W})} (f^*(i_*(\mc{O}_{s})^*)\otimes \mc{L}^{\otimes n}[m], M) =0
        \end{multline*}
        with the last equality holding since $f^*(i_*(\mc{O}_{s})^*)\otimes \mc{L}^{\otimes n}[m]\in \mc{D}$ by the assumptions. The line bundle $\tilde{\imath}^*\mc{L} \in \operatorname{Pic}(\mc{W}_s)$ is ample, hence \cite[Corollary~3.19]{Huy06} yields $\tilde{\imath}^*M=0$, in particular, for every closed point $x\in \mc{W}$ such that $f(x)=s$ the fiber $M(x)$ is trivial. Since this holds for all closed points $s\in S$, one has $M=0$, hence $\D^{\perp}=\{0\}$ and $\D^b(\mc{W})=\D$.
    \end{proof}

\subsection{SODs for derived categories of twisted generalized flag varieties}
\label{sec:SODs_twisted_flags}

In this section we show that a full equivariantly separable-exceptional collection in the universal case of the $G$-equivariant morphism $\rfaktor{G}{P} \to \Spec k$ gives rise to a full relative separable-exceptional collection on any twisted generalized flag variety $\rfaktor{\mc{E}}{P}\to S$, and to a semiorthogonal decomposition of the derived category thereof. In particular, combined with Theorem~\ref{thm:SOD_general}, this provides semiorthogonal decompositions for the derived categories of twisted generalized flag varieties associated to semisimple groups.

In this section $G$ is an affine algebraic group over $k$, $P\leqslant G$ is a parabolic subgroup, $\pi\colon \rfaktor{G}{P}\to \Spec k$ is the structure morphism, $p\colon \mc{E}\to S$ is a right $G$-torsor over $S\in \Smk$ and $\pi_{\mc{E}}\colon\rfaktor{\mc{E}}{P}\to S$ is the corresponding relative generalized flag variety.

\begin{definition}
\label{def:descent}
We may view $\mathcal{E}$ as a variety with a left $G$-action composing the given right $G$-action with the inversion $G\xrightarrow{(-)^{-1}} G$. Faithfully flat descent gives a natural symmetric monoidal equivalence
\[
\Theta\colon \D^b_G(\mathcal{E})\xrightarrow{\simeq}\D^b(S),
\]
see e.g. \cite[Theorem~4.46]{Vi07}. If $M\in \D^b_G(\mathcal{E})$ is a locally free sheaf corresponding to a $G$-equivariant vector bundle $\mathcal{V}\to \mathcal{E}$ then $\Theta(M)$ corresponds to the vector bundle $\lfaktor{G}{\mathcal{V}} \to \lfaktor{G}{\mathcal{E}}\cong S$. For $\mc{A}\in \SAlg_G \mc{E}$ we have $\Theta(\mc{A})\in \SAlg S$ and a similar equivalence
\[
\Theta\colon \D^b_G(\mathcal{E},\mc{A})\xrightarrow{\simeq}\D^b(S,\Theta(\mc{A})).
\]

We have an isomorphism $\mathcal{E}\times G \xrightarrow{\simeq} \mathcal{E}\times_S \mathcal{E}$, $(x,g)\mapsto (x,xg)$, so we have a cartesian square
\[
\xymatrix{
\mathcal{E}\times \rfaktor{G}{P} \ar[r]^(0.55){\mu}\ar[d]_{\id \times \pi} &\rfaktor{\mathcal{E}}{P} \ar[d]^{\pi_{\mathcal{E}}}\\
\mathcal{E} \ar[r]^p & S
}
\]
with $\mu(x,gP)=xgP$. Thus $\mu$ is a pullback of a $G$-torsor hence a $G$-torsor itself, and one can check that the right $G$-action is given by 
\[
\mathcal{E}\times \rfaktor{G}{P} \times G \to \mathcal{E}\times \rfaktor{G}{P},\quad (x,gP,h)\mapsto (xh,h^{-1}gP).
\]
Similarly to the above, composing with the inversion morphism, we can make this action a left one, and faithfully flat descent gives a natural symmetric monoidal equivalence
\[
\Theta \colon \D^b_G(\mathcal{E}\times \rfaktor{G}{P}) \xrightarrow{\simeq} \D^b(\rfaktor{\mathcal{E}}{P}).
\]
Functoriality of descent yields that the following diagrams commute.
\begin{equation} \label{eq:descent}
\begin{gathered}
\xymatrix{
 \D^b_G(\mathcal{E}\times \rfaktor{G}{P}) \ar[r]^(0.55)\Theta & \D^b(\rfaktor{\mathcal{E}}{P})\\
 \D^b_G(\mathcal{E})\ar[u]^{(\id \times \pi)^*} \ar[r]^\Theta &  \D^b(S) \ar[u]_{\pi_{\mathcal{E}}^*}
}\qquad
\xymatrix{
 \D^b_G(\mathcal{E}\times \rfaktor{G}{P}) \ar[d]_{(\id \times \pi)_*} \ar[r]^(0.55)\Theta & \D^b(\rfaktor{\mathcal{E}}{P}) \ar[d]^{(\pi_{\mathcal{E}})_*}\\
 \D^b_G(\mathcal{E}) \ar[r]^\Theta &  \D^b(S) 
}
\end{gathered}
\end{equation}

    Let $q\colon \mathcal{E}\xrightarrow{p} S\to \Spec k$ be the structure morphism. Then we have the following compositions of symmetric monoidal functors
    \[
        \mathcal{L}^{\mathcal{E}}_{S} \colon \D^b_G(\Spec k)  \xrightarrow{q^*} \D^b_G(\mathcal{E}) \xrightarrow{\Theta} \D^b(S), \quad
        \mathcal{L}^{\mathcal{E}}_{\rfaktor{\mathcal{E}}{P}} \colon \D^b_G(\rfaktor{G}{P})  \xrightarrow{(q\times \id)^*} \D^b_G(\mathcal{E}\times \rfaktor{G}{P}) \xrightarrow{\Theta}  \D^b(\rfaktor{\mathcal{E}}{P}),
    \]
    and the following diagrams.
    \[
    \xymatrix{
    \D^b_G(\rfaktor{G}{P}) \ar[r]^(0.5){\mathcal{L}^{\mathcal{E}}_{S}} & \D^b(\rfaktor{\mathcal{E}}{P})  \\
    \D^b_G(\Spec k) \ar[u]^{\pi^*} \ar[r]^(0.5){\mathcal{L}^{\mathcal{E}}_{\rfaktor{\mathcal{E}}{P}}} & \D^b(S) \ar[u]_{\pi_{\mathcal{E}}^*}
    }
    \qquad
    \xymatrix{
    \D^b_G(\rfaktor{G}{P}) \ar[d]_{\pi_*} \ar[r]^(0.55){\mathcal{L}^{\mathcal{E}}_{S}} & \D^b(\rfaktor{\mathcal{E}}{P})  \ar[d]^{(\pi_{\mathcal{E}})_*} \\
    \D^b_G(\Spec k)  \ar[r]^(0.55){\mathcal{L}^{\mathcal{E}}_{\rfaktor{\mathcal{E}}{P}}} & \D^b(S) 
    }
    \]
    These diagrams commute because of the commutativity of the diagrams~\refbr{eq:descent} (for the second diagram one uses also base change property). We will usually drop the subscripts from $\mathcal{L}^{\mathcal{E}}_{S}$ and $\mathcal{L}^{\mathcal{E}}_{\rfaktor{\mathcal{E}}{P}}$ and denote both functors as $\mathcal{L}^{\mathcal{E}}$. For $A\in \SAlg_G \Spec k$ we denote
    \[
    \mc{A}^{\mc{E}}:=\mc{L}^{\mc{E}}(A) \in \SAlg S,
    \]
    and if $A$ is an equivariant Azumaya (resp. commutative separable) algebra, then $\mc{A}^\mc{E}$ is a sheaf of Azumaya (resp. commutative separable) algebras over $S$.
\end{definition}

\begin{remark}
    The passage from $\rfaktor{G}{P}\to \Spec k$ to $\lfaktor{G}{\left(\mathcal{E}\times \rfaktor{G}{P}\right)}\cong\rfaktor{\mathcal{E}}{P}\to S$ is an instance of the twisting by a torsor construction as in \cite[I.5.3]{Serre97}.
\end{remark}


\begin{lemma} \label{lem:split_homogeneous}
    Let $X\in \Smk$ be a connected proper variety homogeneous under an action of $G$. Then for some finite field extension $K/k$ there exists an isomorphism $X_K\cong \rfaktor{\bar{G}}{\bar{P}}$ for a split semisimple group $\bar{G}$ over $K$ and a parabolic subgroup $\bar{P}\leqslant \bar{G}$.
\end{lemma}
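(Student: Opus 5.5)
We pass to a finite extension where everything becomes split and homogeneous in the classical sense, and then apply the structure theory of reductive groups and their parabolics. Here $G$ is an affine algebraic group over $k$ (smooth, by our conventions) acting on $X$ with $X$ connected, proper, smooth and homogeneous, meaning that $G(\bar k)$ acts transitively on $X(\bar k)$.

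\emph{Step 1: replace $G$ by a reductive group with a rational point on $X$.} Choose a finite extension $K_0/k$ such that $X(K_0)\neq\emptyset$; this is possible since $X$ is of finite type over $k$. Pick $x\in X(K_0)$. Since $X$ is connected and homogeneous, the identity component $G^0_{K_0}$ already acts transitively on $X_{K_0}$. The orbit map $G^0_{K_0}\to X_{K_0}$, $g\mapsto gx$, is then surjective with stabilizer $H$, and it induces a bijective morphism $\rfaktor{G^0_{K_0}}{H}\to X_{K_0}$. Because $X$ is proper, $H_{\mathrm{red}}$ is parabolic in $G^0_{K_0}$ in the sense of the conventions.

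Next we quotient out the unipotent radical. The unipotent radical $R_u$ of $G^0$ (after possibly enlarging the field, so that it is defined over $K_0$) is contained in every parabolic subgroup, hence acts trivially on $\rfaktor{G^0}{H}$. Also the central torus and the center of the reductive quotient lie in every parabolic and so act trivially. So we may replace $G^0$ by the adjoint semisimple group $\bar G:=G^0/\mathcal{R}(G^0)$ modulo its center, with $\bar P$ the image of $H_{\mathrm{red}}$, which is again parabolic.

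\emph{Step 2: split the group.} Enlarge $K_0$ to a finite extension $K$ over which $\bar G$ splits. This is possible because every reductive group splits over a finite separable extension: take a maximal torus, which is defined over the ground field, and split it over a finite separable extension. Then $\bar P_K$ is a parabolic subgroup of a split semisimple group $\bar G_K$.

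\emph{Step 3: identify $X_K$ with $\rfaktor{\bar G_K}{\bar P_K}$.} This is the main obstacle, because of inseparability: in positive characteristic the stabilizer $H$ may be non-reduced. Then $\rfaktor{G}{H_{\mathrm{red}}}\to X$ is a purely inseparable bijective morphism rather than an isomorphism, as with Frobenius twists or exotic non-reduced parabolics. Here we must use that $X$ is smooth.

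\begin{itemize}
\item First try: since $X_K$ is smooth, after passing to the perfect closure, or using that over $\bar k$ the orbit map to a smooth homogeneous variety is separable up to a Frobenius twist, the smooth variety $X_K$ is itself homogeneous under $\bar G$ with reduced stabilizer.
\item Alternatively, invoke the result (Wenzel, Haboush--Lauritzen) that the non-reduced parabolic subgroup schemes $Q$ satisfy $\rfaktor{\bar G}{Q}\cong\rfaktor{\bar G'}{P'}$ for a reduced parabolic $P'$ of a possibly different split group $\bar G'$. Examples are Frobenius kernels twisted by isogenies, or the exceptional cases in characteristics $2$ and $3$, where smoothness of $\rfaktor{\bar G}{Q}$ is automatic.
\end{itemize}

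This is exactly why the lemma allows $\bar G$ to differ from $G$. Having fixed the rational point $x$, the isomorphism then descends to $K$ directly.
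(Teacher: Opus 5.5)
Your Steps 1 and 2 are essentially the paper's proof. That proof reduces to the identity component, acquires a rational point after a finite extension, quotients by the radical (which lies in every parabolic), and splits a maximal torus. You also correctly notice something the paper's proof passes over silently: the stabilizer of a rational point may be non-reduced in positive characteristic.

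\textbf{The gap is Step 3.} Neither of your two resolutions is correct.

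\emph{First bullet.} Smoothness of $X$ carries no information about the stabilizer. As you yourself remark at the end, $G/Q$ is smooth for every subgroup scheme $Q$ of a smooth $G$, so smoothness cannot force a reduced stabilizer or a Frobenius twist.

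\emph{Second bullet.} The claim that $\bar G/Q\cong\bar G'/P'$ with $P'$ reduced is false; varieties of unseparated flags need not be flag varieties. A concrete example: let $p\geq 3$ and $X=\{\sum_i x_i^py_i=0\}\subseteq\mathbb{P}^2\times\check{\mathbb{P}}^2$.
\begin{itemize}
\item $X$ is smooth, connected and projective; it is a $\mathbb{P}^1$-bundle over $\mathbb{P}^2$.
\item $X$ is homogeneous under $\mathrm{SL}_3$ acting by $g\cdot(x,y)=(gx,F(g)^{-T}y)$, with $F$ the Frobenius endomorphism, and the stabilizers are non-reduced.
\item By adjunction $\omega_X^{-1}\cong\mathcal{O}_X(3-p,2)$. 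This has degree $3-p\leq 0$ on the lines contracted by the projection to $\check{\mathbb{P}}^2$, so it is not ample.
\item Every $\rfaktor{\bar{G}}{\bar{P}}$ with $\bar{P}$ smooth is Fano; this is exactly what Lemma~\ref{lem:canonical_antiample} relies on.
\end{itemize}
So $X_K\not\cong\rfaktor{\bar{G}}{\bar{P}}$ for every $K$. Once non-reduced stabilizers are allowed, the conclusion of the lemma itself fails, and no argument in Step 3 can close the gap.

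\textbf{What is missing.} The statement has to be read the way the paper's proof tacitly reads it: after acquiring a rational point, $X\cong\rfaktor{G}{P}$ with $P$ parabolic in the sense of the conventions, in particular smooth. This holds in the only place the lemma is used. The fibers of $\rfaktor{\mc{E}}{P}\to S$ become $\rfaktor{G}{P}$, with $P$ smooth, over a finite extension trivializing the torsor.

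Under that reading, $\rfaktor{\bar{G}}{\bar{P}}\to X_K$ is already an isomorphism. Your Step 3 becomes unnecessary, and so does the passage to $H_{\mathrm{red}}$, which over an imperfect field need not even be a subgroup scheme. What remains coincides with the paper's argument.
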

\begin{proof}
    Without loss of generality we may assume $G$ to be connected. Passing to a finite field extension we may assume that $X$ has a rational point, hence $X\cong \rfaktor{G}{P}$ for a parabolic subgroup $P\leqslant G$. Further enlarging the field we may assume that the radical $R(G_{k^{\mathrm{alg}}})$ is defined over $k$, i.e. $R(G_{k^{\mathrm{alg}}})= R(G)_{k^{\mathrm{alg}}}$. We have $R(G)\leqslant P$ by \cite[Proposition~17.19, Remark~17.31]{Mi17}, thus passing to $\rfaktor{G}{R(G)}$ we may assume $G$ to be semisimple. Finally, passing to the splitting field of a maximal torus in $G$ we obtain the claim.
\end{proof}

\begin{lemma} \label{lem:canonical_antiample}
    The relative anti-canonical bundle $\omega^*_{\pi_{\mc{E}}}$ is $\pi_{\mc{E}}$-ample.
\end{lemma}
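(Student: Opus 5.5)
Since $\pi_{\mc{E}}\colon \rfaktor{\mc{E}}{P}\to S$ is proper and flat (indeed smooth, being étale-locally on $S$ isomorphic to $S\times \rfaktor{G}{P}$), relative ampleness can be checked fiberwise: by the fiberwise criterion for relative ampleness (EGA~III, 4.7.1, or \cite[Theorem~1.7.8]{Lazarsfeld}-type statements), a line bundle on a proper flat finitely presented morphism is relatively ample as soon as its restriction to every fiber is ample. So the plan is to reduce to showing that for every point $s\in S$ the restriction $\omega^*_{\pi_{\mc{E}}}|_{\mc{W}_s}$ is ample on the fiber $\mc{W}_s=\pi_{\mc{E}}^{-1}(s)$, a variety over the residue field $\kappa(s)$.

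First I identify the restriction. Since $\pi_{\mc{E}}$ is smooth, $\omega_{\pi_{\mc{E}}}=\det \Omega_{\pi_{\mc{E}}}$ commutes with base change, so $\omega^*_{\pi_{\mc{E}}}|_{\mc{W}_s}\cong \omega^*_{\mc{W}_s}$, the anticanonical bundle of the fiber. The fiber $\mc{W}_s=\rfaktor{\mc{E}_s}{P_{\kappa(s)}}$ is the twist of $\rfaktor{G}{P}$ by the $G_{\kappa(s)}$-torsor $\mc{E}_s$; it is a smooth proper variety homogeneous under the inner twist ${}^{\mc{E}_s}G$ acting by automorphisms of the torsor, and it is connected (geometrically it is $\rfaktor{G}{P}$, and $P$ meets every component of $G$ if we take $G$ connected; in general I would handle components separately, noting ampleness is checked componentwise).

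Next, ampleness of a line bundle descends along field extensions: if $K/\kappa(s)$ is finite and $\omega^*_{\mc{W}_s}\otimes_{\kappa(s)}K$ is ample on $(\mc{W}_s)_K$, then $\omega^*_{\mc{W}_s}$ is ample. By Lemma~\ref{lem:split_homogeneous} there is a finite extension $K$ with $(\mc{W}_s)_K\cong \rfaktor{\bar G}{\bar P}$ for a split semisimple $\bar G$ and parabolic $\bar P$, and again $\omega$ commutes with this base change. Thus it remains to recall the classical fact that the anticanonical bundle of $\rfaktor{\bar G}{\bar P}$ is ample: it is the homogeneous line bundle attached to the character $2\rho_{\bar P}$ (sum of roots in the unipotent radical of the opposite/appropriate parabolic), which is a strictly dominant weight relative to $\bar P$, i.e. pairs positively with all simple coroots not in the Levi of $\bar P$, hence ample (e.g.\ \cite[II.4.4 and II.8.5]{Jan03} for the ampleness criterion, or directly: $\rfaktor{\bar G}{\bar P}$ is Fano).

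The main obstacle is only bookkeeping: making sure the fiberwise criterion applies (properness and flatness of $\pi_{\mc{E}}$, which follow from the cartesian square in Definition~\ref{def:descent} and fpqc descent of these properties along $p$) and correctly identifying $2\rho_{\bar P}$ with a weight that is positive on the simple coroots outside the Levi; the latter I would verify by the computation $\langle \alpha^\vee,2\rho_{\bar P}\rangle=\langle\alpha^\vee,2\rho\rangle-\langle\alpha^\vee,2\rho_L\rangle = 2-\langle\alpha^\vee,2\rho_L\rangle\ge 2$ for simple $\alpha$ not in the Levi, since $\langle\alpha^\vee,\beta\rangle\le 0$ for positive Levi roots $\beta$.
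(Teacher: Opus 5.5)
Your proposal is correct and follows the paper's proof essentially step for step. Both arguments go: the fiberwise criterion for relative ampleness, descent of ampleness along a finite field extension, Lemma~\ref{lem:split_homogeneous} applied to a connected component of the fiber, and ampleness of the anticanonical bundle of $\bar G/\bar P$. The only differences are minor. The paper cites EGA IV$_3$ 9.6.4 and checks only closed points, where you check every point. The paper also quotes Jantzen for the canonical bundle formula and the ampleness criterion, while you carry out the short $2\rho_{\bar P}$ coroot computation yourself.
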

\begin{proof}
    By \cite[Corollaire~9.6.4]{EGA43} it suffices to check ampleness of the anti-canonical bundles $\omega_{\rfaktor{\mc{E}_s}{P_{k(s)}}}^*$ of the fibers $\rfaktor{\mc{E}_s}{P_{k(s)}}$ over the closed points $s\in S$. Furthermore, by \cite[Corollaire~6.6.3]{EGA2} it suffices to check ampleness of $\omega_{(\rfaktor{\mc{E}_s}{P_{k(s)}})_K}^*$ for a finite field extension $K/k(s)$. Passing to a connected component of $\rfaktor{\mc{E}_s}{P_{k(s)}}$ and applying Lemma~\ref{lem:split_homogeneous} we obtain an isomorphism $(\rfaktor{\mc{E}_s}{P_{k(s)}})_K\cong \rfaktor{\bar{G}}{\bar{P}}$ for a split semisimple group $\bar{G}$ and a parabolic subgroup $\bar{P}\leqslant \bar{G}$ over $K$. Then the claim follows from the formula for $\omega_{\rfaktor{\bar{G}}{\bar{P}}}$ given in \cite[4.2.(6)]{Jan03} and the respective criterion for ampleness \cite[Proposition~II.4.4 and Remark~1) below]{Jan03}.
\end{proof}

\begin{lemma} \label{lem:canonical_equivariant}
    One has $\omega_{\pi_{\mc{E}}}\cong \mc{L}^{\mc{E}}(\omega_{\rfaktor{G}{P}})$ for the relative canonical bundle $\omega_{\pi_{\mc{E}}}$, in particular, $\omega_{\pi_{\mc{E}}}\in \mc{L}^{\mc{E}}(\D^b_G(\rfaktor{G}{P}))$.
\end{lemma}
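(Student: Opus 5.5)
The plan is to compute the relative canonical bundle on the torsor cover $\mathcal{E}\times \rfaktor{G}{P}$ and then descend it along $\mu$ to $\rfaktor{\mc{E}}{P}$.

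\textbf{Relative cotangent bundle upstairs.} Consider the cartesian square of Definition~\ref{def:descent}, with $\mu\colon \mc{E}\times\rfaktor{G}{P}\to\rfaktor{\mc{E}}{P}$ a $G$-torsor and $\id\times\pi$ the base change of $\pi_{\mc{E}}$ along the flat morphism $p$. Formation of relative differentials commutes with base change, so
\[
\mu^*\Omega_{\pi_{\mc{E}}}\cong \Omega_{\id\times\pi}\cong \mathrm{pr}_2^*\,\Omega_{\rfaktor{G}{P}}.
\]
Both isomorphisms are canonical, hence compatible with the natural $G$-actions. On $\mc{E}\times\rfaktor{G}{P}$ the group $G$ acts diagonally, through the inverted right action on $\mc{E}$ and the left action on $\rfaktor{G}{P}$, and $\mathrm{pr}_2=q\times\id$ is $G$-equivariant. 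The first isomorphism is $G$-equivariant because it comes from functoriality of $\Omega$ with respect to the $G$-equivariant morphism $\mu$ over $p$. The second is $G$-equivariant because the $G$-action on $\Omega_{\rfaktor{G}{P}}$ is the one induced by functoriality of differentials under the automorphisms of $\rfaktor{G}{P}$ given by $g$.

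\textbf{Top exterior power and descent.} Taking top exterior powers gives a $G$-equivariant isomorphism
\[
\mu^*\omega_{\pi_{\mc{E}}}\cong (q\times\id)^*\,\omega_{\rfaktor{G}{P}}
\]
in $\D^b_G(\mc{E}\times\rfaktor{G}{P})$, where $\mu^*\omega_{\pi_{\mc{E}}}$ carries the canonical equivariant structure of a pullback along the torsor $\mu$. The descent equivalence $\Theta$ is inverse to $\mu^*$ with this canonical structure. Applying $\Theta$ therefore yields
\[
\omega_{\pi_{\mc{E}}}\cong \Theta\bigl((q\times\id)^*\omega_{\rfaktor{G}{P}}\bigr)=\mc{L}^{\mc{E}}(\omega_{\rfaktor{G}{P}}),
\]
which is the claimed isomorphism. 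The membership $\omega_{\pi_{\mc{E}}}\in \mc{L}^{\mc{E}}(\D^b_G(\rfaktor{G}{P}))$ follows immediately.

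\textbf{Main obstacle.} The only delicate step is checking that the equivariant structure on $\Omega_{\id\times\pi}$ inherited from descent agrees with the one pulled back from $\Omega_{\rfaktor{G}{P}}$. I would verify this by noting that all the maps involved arise from functoriality of $\Omega$ for morphisms commuting with the action map $G\times(\mc{E}\times\rfaktor{G}{P})\to \mc{E}\times\rfaktor{G}{P}$. The cocycle conditions then match automatically, since both structures are the canonical ones on relative differentials of the $G$-equivariant morphism $\id\times\pi$.
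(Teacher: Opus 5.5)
Your proof is correct. It has the same overall shape as the paper's: identify a canonical bundle on the torsor cover $\mc{E}\times\rfaktor{G}{P}$ with the pullback of $\omega_{\rfaktor{G}{P}}$, then descend along $\mu$ via $\Theta$. The key input, however, is different.

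The paper works with the canonical bundle of the composite $p\circ\pi_1\colon\mc{E}\times\rfaktor{G}{P}\to S$. It discards the factor $\pi_1^*\omega_p$ because the canonical bundle of the $G$-torsor $p$ is trivial. Its step $\Theta(\omega_{p\circ\pi_1})\cong\omega_{\pi_{\mc{E}}}$ tacitly discards, for the same reason, the factor $\omega_\mu$ in $\omega_{p\circ\pi_1}\cong\omega_\mu\otimes\mu^*\omega_{\pi_{\mc{E}}}$. You instead work with $\omega_{\pi_1}=\omega_{\id\times\pi}$ and use only base change of relative differentials in the two cartesian squares, so you need no statement about canonical bundles of torsors.

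This makes your argument somewhat more robust. Equivariantly, the canonical bundle of a $G$-torsor is $\mc{O}\otimes\det\mathfrak{g}^{\vee}$ with $G$ acting through $(\det\mathrm{Ad})^{\pm 1}$. That is harmless for the semisimple groups to which the lemma is ultimately applied. Your argument, though, works verbatim for an arbitrary affine algebraic group $G$, as in the standing assumptions of that section. In the paper's argument the two twists cancel precisely because $\omega_\mu\cong\pi_1^*\omega_p$, which is again your base change. Your route also gives the finer statement $\Omega_{\pi_{\mc{E}}}\cong\mc{L}^{\mc{E}}(\Omega_{\rfaktor{G}{P}})$ before taking determinants.

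Your treatment of equivariance is the right check: naturality of the base-change isomorphism for the $G$-equivariant cartesian squares, using that $\mu$ and $p$ are $G$-invariant. The paper leaves this check implicit as well. The paper's route is simply shorter, resting on the familiar fact about canonical bundles of torsors.
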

\begin{proof}
    Since $p\colon \mc{E}\to S$ is a $G$-torsor, the relative canonical bundle $\omega_p$ is trivial. Thus for the composition $\mc{E}\times \rfaktor{G}{P} \xrightarrow{\pi_1} \mc{E} \xrightarrow{p} S$ there is a $G$-equivariant isomorphism $\omega_{p\circ\pi_1} \cong \pi_2^*\omega_{\rfaktor{G}{P}}$, where $\pi_1\colon \mc{E}\times \rfaktor{G}{P} \to \mc{E}$ and $\pi_2\colon \mc{E}\times \rfaktor{G}{P} \to \rfaktor{G}{P}$ are the projections. Under the descent equivalence we have $\Theta(\omega_{p\circ\pi_1}) \cong \omega_{\pi_{\mc{E}}}$, thus $\mc{L}^{\mc{E}}(\omega_{\rfaktor{G}{P}})\cong \omega_{\pi_{\mc{E}}}$.
\end{proof}

\begin{theorem} \label{thm:base_change}
    Let $A_i\in \SAlg_G \Spec k$ and $X_i\in \D^b_G(\rfaktor{G}{P},\mc{A}_i)$, $1\le i \le n$, be a full equivariantly separable-exceptional (resp. Azumaya-exceptional, resp. \'etale-exceptional, resp. exceptional) collection. Then $\mc{L}^{\mc{E}}(X_i)\in \D^b(\rfaktor{\mc{E}}{P},\mc{A}_i^\mc{E})$, $1\le i\le n$, is a full $S$-relative separable-exceptional (resp. Azumaya-exceptional, resp. \'etale-exceptional, resp. exceptional) collection.
\end{theorem}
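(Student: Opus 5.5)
The plan is to transport everything along the symmetric monoidal functor $\mc{L}^{\mc{E}}$. It is built from a pullback and the descent equivalence $\Theta$, and by the commutative squares in Definition~\ref{def:descent} it commutes with $\pi^*$ and with $\pi_*$. Since $\mc{L}^{\mc{E}}$ is symmetric monoidal, it also commutes with $\iHom$ and with duals of perfect objects. I would first check that $\mc{L}^{\mc{E}}$ extends to module categories, namely $\D^b_G(\rfaktor{G}{P},A)\to \D^b(\rfaktor{\mc{E}}{P},\mc{A}^{\mc{E}})$, compatibly with the pairings $-\otimes_{A}-$. This holds because a module over $A$ is a module over the monoid $A$, and both $q^*$ and $\Theta$ are monoidal.

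\textbf{Exceptionality and semiorthogonality.} Using the second commutative square and monoidality, I get
\[
(\pi_{\mc{E}})_*\iHom(\mc{L}^{\mc{E}}X_i,\mc{L}^{\mc{E}}X_j)\cong (\pi_{\mc{E}})_*\mc{L}^{\mc{E}}\iHom(X_i,X_j)\cong \mc{L}^{\mc{E}}\pi_*\iHom(X_i,X_j).
\]
For $i>j$ the right-hand side vanishes. For $i=j$, the structure morphism $\theta_{\mc{L}^{\mc{E}}X_i}$ is identified with $\mc{L}^{\mc{E}}(\theta_{X_i})$, which is an isomorphism. This identification needs a check that the adjunction isomorphisms of Definition~\ref{def:thetaX} are respected by $\mc{L}^{\mc{E}}$, which follows from the base change used to get the second square. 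The Azumaya, étale and exceptional variants follow because $\mc{L}^{\mc{E}}$ carries Azumaya (resp. commutative separable, resp. trivial) algebras to the same kind of algebras.

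\textbf{Fullness.} This is the main obstacle, since essential surjectivity is not preserved by the functor. The plan uses Lemma~\ref{lem:generetor}. By Proposition~\ref{prop:SOD_from_collection}, the hull $\D$ of the $\D^b(S,(\mc{A}_i^{\mc{E}})^{\op})\otimes \mc{L}^{\mc{E}}X_i$ is $S$-linear and admissible, and in particular right admissible. Take $\mc{L}=\omega^*_{\pi_{\mc{E}}}$; it is relatively ample by Lemma~\ref{lem:canonical_antiample}. By Lemma~\ref{lem:canonical_equivariant}, $\mc{L}^{\otimes n}\cong \mc{L}^{\mc{E}}((\omega^*_{\rfaktor{G}{P}})^{\otimes n})$.

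Fullness upstairs gives that $(\omega^*_{\rfaktor{G}{P}})^{\otimes n}$ lies in the hull of the objects $M\otimes_{A_i}X_i$ with $M\in\D^b_G(\Spec k,A_i^{\op})$. Applying the exact monoidal functor $\mc{L}^{\mc{E}}$ turns these into $\mc{L}^{\mc{E}}(M)\otimes_{\mc{A}_i^{\mc{E}}}\mc{L}^{\mc{E}}X_i$ with $\mc{L}^{\mc{E}}(M)\in \D^b(S,(\mc{A}_i^{\mc{E}})^{\op})$. Hence $\mc{L}^{\otimes n}\in\D$ for all $n$.

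Lemma~\ref{lem:generetor} then gives $\D=\D^b(\rfaktor{\mc{E}}{P})$, provided its hypotheses hold. Smoothness and properness of $\pi_{\mc{E}}$ hold since it is a twisted form of $\rfaktor{G}{P}\times S\to S$, being étale-locally trivial via $\mc{E}$. The varieties $S$ and $\rfaktor{\mc{E}}{P}$ lie in $\Smk$, so the lemma applies.
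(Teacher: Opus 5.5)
Your proposal is correct and follows the same route as the paper. You obtain exceptionality and semiorthogonality from $(\pi_{\mc{E}})_*\iHom(\mc{L}^{\mc{E}}X_i,\mc{L}^{\mc{E}}X_j)\cong \mc{L}^{\mc{E}}\pi_*\iHom(X_i,X_j)$ via dualizability and monoidality, and fullness by placing all powers of the relative (anti-)canonical bundle in the admissible $S$-linear hull and applying Lemmas~\ref{lem:generetor}, \ref{lem:canonical_antiample} and~\ref{lem:canonical_equivariant}; you also usefully spell out points the paper leaves implicit (compatibility of $\theta$ with $\mc{L}^{\mc{E}}$, why $\mc{L}^{\mc{E}}(\D^b_G(\rfaktor{G}{P}))$ lies in the hull, and smoothness and properness of $\pi_{\mc{E}}$).
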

\begin{proof}
    Consider the following commutative diagram.  
    \[
    \xymatrix{
    \D^b_G(\rfaktor{G}{P}) \ar[r]^(0.5){\mc{L}^\mc{E}} \ar[d]_{\pi_*}  & \D^b(\rfaktor{\mathcal{E}}{P}) \ar[d]^{(\pi_{\mc{E}})_*}\\
    \D^b_G(\Spec k)  \ar[r]^(0.55){\mc{L}^\mc{E}} & \D^b(S)
    }
    \]
    Since all the objects of $\D^b_G(\rfaktor{G}{P})$ are strongly dualizable and $\mc{L}^\mc{E}$ is symmetric monoidal then
    \[
    (\pi_{\mc{E}})_*\iHom_{\D^b(\rfaktor{\mathcal{E}}{P})}(\mc{L}^{\mc{E}}(X_i),\mc{L}^{\mc{E}}(X_j)) \cong (\pi_{\mc{E}})_*\mc{L}^{\mc{E}}(\iHom_{\D^b_G(\rfaktor{G}{P})}(X_i,X_j))
    \cong\mc{L}^{\mc{E}}(\pi_*\iHom_{\D^b_G(\rfaktor{G}{P})}(X_i,X_j)).
    \]
    It follows that $\mc{L}^{\mc{E}}(X_i)\in \D^b(\rfaktor{\mc{E}}{P},\mc{A}_i^\mc{E})$ is an $S$-relative $\mc{A}_i^\mc{E}$-exceptional object and $\{\mc{L}^{\mc{E}}(X_i)\}_{1\le i\le n}$ is an $S$-relative separable-exceptional (resp. Azumaya-exceptional, resp. \'etale-exceptional, resp. exceptional) collection.
    
    By fullness of the collection $\{X_i\}_{1\le i\le n}$ we have
    \[
    \D^b_G(\rfaktor{G}{P}) = \hull\left(\D^b_G(\Spec k, A_1^\op)\otimes_{A_1} X_1,\hdots,\D^b_G(\Spec k, A_n^\op)\otimes_{A_n} X_n\right).
    \] 
    It follows from the above and Proposition~\ref{prop:SOD_from_collection} that the subcategory
    \[
    \mc{D}:=\hull\left( \D^b(S,\left(\mc{A}_1^{\mc{E}}\right)^\op)\otimes_{\mc{A}_1^{\mc{E}}} \mc{L}^{\mc{E}}(X_1),\hdots,\D^b(S,\left(\mc{A}_n^{\mc{E}}\right)^\op)\otimes_{\mc{A}_n^{\mc{E}}} \mc{L}^{\mc{E}}(X_n)\right) \subseteq \D^b(\rfaktor{\mc{E}}{P})
    \]
    is admissible and $S$-linear, and we clearly have $\mc{L}^{\mc{E}}(\D^b_G(\rfaktor{G}{P}))\subseteq \D$. Lemma~\ref{lem:canonical_equivariant} yields that $\omega^{\otimes n}_{\pi_{\mc{E}}}\in \mc{D}$ for every $n\in \mathbb{Z}$, hence it follows from Lemmas~\ref{lem:generetor} and~\ref{lem:canonical_antiample} that $\D=\D^b(\rfaktor{\mc{E}}{P})$, so the collection $\mc{L}^{\mc{E}}(X_1),\mc{L}^{\mc{E}}(X_2),\hdots,\mc{L}^{\mc{E}}(X_n)$ is full.
\end{proof}

\begin{theorem} \label{thm:twisted_flags}
    Let $G$ be a quasi-split semisimple group over a field $k$, $P\leqslant G$ be a parabolic subgroup and $\mathcal{E}\to S$ be a right $G$-torsor over $S\in\Smk$. Then $\D^b(\rfaktor{\mc{E}}{P})$ admits a full $S$-relative separable-exceptional collection and, consequently, there exists an $S$-linear semiorthogonal decomposition 
    \[
    \D^b(\rfaktor{\mc{E}}{P})= \langle \D_1,\D_2,\hdots,\D_n \rangle
    \]
    with $\D_i\cong \D^b(S,\mc{A}_i)$ for some $\mc{A}_i\in \SAlg S$. Furthermore, the following holds:
    \begin{enumerate}
        \item If $G$ is split and simply connected then there exists an $S$-linear semiorthogonal decomposition as above with $\D_i\cong \D^b(S)$ and $n=[W(G):W(P)]$.
        \item If $G$ is simply connected then there exists an $S$-linear semiorthogonal decomposition as above  with $\D_i\cong \D^b(S_{k_i})$ for a finite separable extension $k_i/k$ and $\sum_{i=1}^n [k_i:k]=[W(G):W(P)]$.
        \item If $G$ is split then there exists an $S$-linear semiorthogonal decomposition  as above with all $\mc{A}_i$ being sheaves of Azumaya algebras and $n=[W(G):W(P)]$.
    \end{enumerate}
    Here $W(G)$ and $W(P)$ are the Weyl groups of $G$ and $P$ respectively.
\end{theorem}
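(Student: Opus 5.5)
The plan is to combine Theorem~\ref{thm:base_change} with the representation-theoretic decomposition of Theorem~\ref{thm:SOD_general}, translated into the equivariant language via the equivalence $\D^b_G(\rfaktor{G}{P})\simeq \D^b(\rep P)$ (induction/associated sheaf). Under this equivalence $\pi_*$ corresponds to derived induction $\D^b(\rep P)\to\D^b(\rep G)$, and $\pi^*$ corresponds to restriction. So a full $G$-relative separable-exceptional collection in $\D^b(\rep P)$ in the sense of Definition~\ref{def:rel_rep} should become a full equivariantly separable-exceptional collection $X_i\in\D^b_G(\rfaktor{G}{P},A_i)$ with $A_i\in\SAlg_G\Spec k$. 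One needs to check that the structure maps $\theta_{X_i}$ match the corresponding conditions on the representation side, which should be a direct unwinding of the definitions. The check of fullness is similar: $\D^b_G(\Spec k,A_i^\op)=\D^b(\lrep{A_i^\op}G)$ and $-\otimes_{A_i}X_i$ corresponds to tensoring with the restricted module.

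Theorem~\ref{thm:base_change} then gives a full $S$-relative separable-exceptional collection $\mc{L}^{\mc{E}}(X_i)\in\D^b(\rfaktor{\mc{E}}{P},\mc{A}_i^{\mc{E}})$. Proposition~\ref{prop:SOD_from_collection}(4) turns it into an $S$-linear semiorthogonal decomposition with $\D_i\cong\D^b(S,(\mc{A}_i^{\mc{E}})^\op)$, and $(\mc{A}_i^{\mc{E}})^\op=\mc{L}^{\mc{E}}(A_i^\op)$ is again a sheaf of separable algebras. This proves the main statement.

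For the refinements, I will track which algebras $A_i$ arise in Theorem~\ref{thm:SOD_general}.
\begin{enumerate}
\item In the split simply connected case, \cite[Theorems~14.2, 14.3]{SvdK24} give an honest full exceptional collection with $A_i=k$, indexed by $W(G)/W(P)$. Hence $\mc{A}_i^{\mc{E}}=\mc{O}_S$ and $n=[W(G):W(P)]$.
\item In the simply connected quasi-split case, the construction should give $A_i=k_i$, a separable field extension with trivial $G$-action. Here $k_i$ is the field of definition of a Galois orbit of $W(G)/W(P)$, and the orbit sizes sum to $[W(G):W(P)]$. Then $\mc{L}^{\mc{E}}(k_i)=\mc{O}_S\otimes_k k_i$ and $\D^b(S,\mc{O}_S\otimes k_i)\simeq\D^b(S_{k_i})$.
\item In the split (possibly non-simply connected) case, the Galois action is trivial, so every $k_i=k$ and $A_i=\mr{M}_{n_i}(k)$ is a $G$-equivariant central simple algebra. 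Its twist $\mc{A}_i^{\mc{E}}$ is therefore Azumaya, and again $n=[W(G):W(P)]$.
\end{enumerate}

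The main obstacle is not here but in Theorem~\ref{thm:SOD_general} itself, which is assumed. Within this proof, the delicate point is the dictionary between Definition~\ref{def:rel_rep} and Definition~\ref{def:relative_decomposition}. In particular, I must confirm that the component categories $\D^b(\lrep{A_i}G)$ correspond to $\D^b_G(\Spec k,A_i^\op)$ with the correct op-conventions, and that the counts $n_i,k_i$ stated in Theorem~\ref{thm:SOD_general}, or its proof, deliver exactly the numerical claims in (1)–(3). If the general statement does not record these counts, I would extract them from the indexing of the collection by Galois orbits on $W(G)/W(P)$.
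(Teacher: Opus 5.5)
Your proposal is correct and is essentially the paper's proof. The full equivariant collections come from \cite[Theorems~14.2 and~14.3]{SvdK24} in the split simply connected case, from Theorem~\ref{thm:SOD_rep_qs} in the quasi-split simply connected case, and from Theorem~\ref{thm:SOD_general} in general; they are then pushed through Section~\ref{sec:rep_translate}, Theorem~\ref{thm:base_change} and Proposition~\ref{prop:SOD_from_collection}, with the counts in (1)--(3) read off as in Remark~\ref{rem:equiv_Tits_prop}. Your ``construction'' giving $A_i=k(v_i)$ with trivial action is Theorem~\ref{thm:SOD_rep_qs}, whereas Theorem~\ref{thm:SOD_general} would give matrix algebras over $k(v_i)$ there, which become Morita trivial after twisting.
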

\begin{proof}
    It was shown in {\cite[Theorems~14.2 and~14.3]{SvdK24}} that if $G$ is split and simply connected then $\D^b_G(\rfaktor{G}{P})$ admits a full equivariantly exceptional collection of length $[W(G):W(P)]$, while the existence of full equivariantly \'etale-exceptional and separable-exceptional collections in the quasi-split simply connected and in the general cases is established in Theorems~\ref{thm:SOD_rep_qs} and~\ref{thm:SOD_general} respectively in the second half of the current paper (see Section~\ref{sec:rep_translate} below for the translation between representation--theoretic and equivariant terminology). Then Theorem~\ref{thm:base_change} and Proposition~\ref{prop:SOD_from_collection} yield the claim. See also Remark~\ref{rem:equiv_Tits_prop} for the additional properties of the collections.
\end{proof}

\begin{remark} \label{rem:comments_on_SOD_twisted_flag}
\begin{enumerate}
    \item The sheaves of algebras $\mc{A}_i$ appearing in  Theorem~\ref{thm:twisted_flags} are constructed as $\mc{L}^{\mc{E}}(A_{v})$ with $A_v$ being the endomorphism algebra $\iEnd (\ind_{\tilde{B}}^{\tilde{G}} K_{ve_v})$ viewed as a $G$-representation, where $\tilde{G}$ is the simply connected cover of $G$, $\tilde{B}\leqslant \tilde{G}$ is a Borel subgroup, $K$ is the field of definition of the Steinberg weight $e_v$ corresponding to $v\in W(G)$ and $K_{ve_v}$ is the one-dimensional $K$-representation of $\tilde{B}$ of weight $ve_v$. See Definitions~\ref{def:Steinberg},~\ref{def:Azumaya_equiv} and~\ref{def:equiv_Tits} for more details.
    \item The $S$-linear semiorthogonal decomposition constructed in the proof of Theorem~\ref{thm:twisted_flags} depends on the separable-exceptional collection from Theorem~\ref{thm:SOD_general}, which in turn depends on the choice of a total order on the Weyl group $W(G)$. For some choices of the total order one may obtain additional orthogonality properties, see Remark~\ref{rem:additional_orth}.
    \item If $S=\Spec k$ then the algebras $\mc{A}_i$ are the same separable algebras (Tits algebras) that appeared in \cite[Theorem~12.2]{Pan94} in the computation of Quillen $K$-theory of twisted flag varieties. In particular, since $K$-theory takes semiorthogonal decompositions to direct sums, we recover \cite[Theorem~12.2]{Pan94}.
    \item 
        The same reasoning shows that in the notation and assumptions of Theorem~\ref{thm:twisted_flags} for $\mc{A}\in \SAlg S$ there exists a semiorthogonal decomposition
    \[
    \D^b(\rfaktor{\mc{E}}{P},\mc{A})= \langle \D_1,\D_2,\hdots,\D_n \rangle
    \]
    with $\D_i\cong \D^b(S,\mc{A}\otimes \mc{A}_i)$.
\end{enumerate}

\end{remark}    

\begin{corollary} \label{cor:Panin}
    Let $X\in\Smk$ be a smooth projective variety homogeneous under an action of a reductive group. Then there exists a semiorthogonal decomposition
    \[
        \D^b(X)= \langle \D_1,\D_2,\hdots,\D_n \rangle
    \]
    with $\D_i\cong \D^b(A_i)$ and $A_i$ being a separable algebra over $k$.
\end{corollary}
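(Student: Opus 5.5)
The plan is to reduce Corollary~\ref{cor:Panin} to Theorem~\ref{thm:twisted_flags} with $S=\Spec k$. The task is to realize $X$ as $\rfaktor{\mc{E}}{P}$ for a quasi-split semisimple group $G_0$, a parabolic $P\leqslant G_0$ and a $G_0$-torsor $\mc{E}$ over some finite separable extension of $k$, and then descend.

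First I reduce to the case where $X$ is geometrically connected and $G$ is connected semisimple. A smooth projective variety is a finite disjoint union of connected components, and $\D^b$ of a disjoint union is the orthogonal direct sum of the pieces. Concatenating semiorthogonal decompositions of the pieces gives one for $X$. So we may assume $X$ connected; $G^\circ$ then still acts transitively on $X$ (the $G^\circ$-orbits in the connected $X$ are open and closed), so we may assume $G$ connected. Let $L=H^0(X,\mc{O}_X)$; it is a finite separable field extension of $k$, since $X$ is smooth, proper and connected. $X$ is then a geometrically connected homogeneous variety over $L$ under the action of $G_L$, and $G$ also acts $L$-linearly on it. As in the proof of Lemma~\ref{lem:split_homogeneous}, the radical acts trivially, because geometrically it lies in every parabolic. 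Therefore $X$ is homogeneous under the semisimple group $G'=\rfaktor{G_L}{R(G_L)}$ over $L$; I will check that the radical is defined over $L$ in the relevant sense, or simply replace $G'$ by the image of $G_L$ in $\mathrm{Aut}(X)^\circ$, which is semisimple adjoint.

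Second, I write $X$ as a twisted flag variety. Let $G_0$ be the quasi-split inner form of $G'$ over $L$; it exists by the standard classification of reductive groups. Take $G_0$ adjoint, so that $G'=\mc{E}$-twist of $G_0$ for a $G_0$-torsor $\mc{E}\to\Spec L$, $\mc{E}$ being the torsor of inner twisting isomorphisms. Over $L^{\mathrm{sep}}$ we have $X\cong \rfaktor{G'}{P'}$, and the type of $P'$ is Galois-stable for the $*$-action. Since $G_0$ is quasi-split, a parabolic $P\leqslant G_0$ of that type exists over $L$. The twist $\rfaktor{\mc{E}}{P}$ is then the variety of parabolics of $G'$ of this type, which is $X$: both are the unique projective homogeneous $G'$-variety of the given type, by Borel–Tits.

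Third, I apply Theorem~\ref{thm:twisted_flags} with $S=\Spec L$. This gives $\D^b(X)=\langle \D_1,\hdots,\D_n\rangle$ with $\D_i\cong \D^b(\Spec L,\mc{A}_i)=\D^b(A_i)$, where $A_i$ is a separable $L$-algebra. A separable $L$-algebra is separable over $k$ because $L/k$ is separable: $A_i$ is a product of central simple algebras over fields separable over $L$, hence separable over $k$. Finally I reassemble the components of $X$.

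The main obstacle is the second step: making precise that the twisted variety $\rfaktor{\mc{E}}{P}$ is isomorphic to $X$ over $L$. I would handle it by comparing Galois descent data. Over $L^{\mathrm{sep}}$, $X$ has a point with stabilizer $P'$; the twisted variety has the same description after conjugating $P'$ to $P_{L^{\mathrm{sep}}}$ via the inner twist. The two cocycles then agree modulo $P$, and self-normalizing parabolics give uniqueness.
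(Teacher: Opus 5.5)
Your proposal is correct and follows essentially the same route as the paper. Both arguments pass to an adjoint semisimple quotient: the paper divides by $Z(G)$, which lies in every Borel subgroup, and you take the image in $\mathrm{Aut}(X)^\circ$. Both then realize $X$ as the twist of $G^{qs}/P$ by the inner-twisting torsor, with $P$ defined over the base because its type is Galois-stable and $G^{qs}$ is quasi-split, and finally apply Theorem~\ref{thm:twisted_flags} with $S$ a point. Your extra reduction via $L=H^0(X,\mathcal{O}_X)$ and restriction of scalars is harmless but not needed under the paper's conventions: a reductive group is connected, so $X$ is automatically geometrically connected.
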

\begin{proof}
    The claim follows from Theorem~\ref{thm:twisted_flags} since it is well-known that $X\cong \rfaktor{\mc{E}}{P}$ for a torsor $\mc{E}\to \Spec k$ under a quasi-split semisimple group $G^{qs}$ and a parabolic subgroup $P\leqslant G^{qs}$. We could not find in the literature a precise reference for this well-known fact in the above form (but see e.g. the discussion in the beginning of \cite[Section~11]{Pan94} for a very close statement), so we provide the justification for the sake of completeness. Over separable closure we have $X_{\ksep}\cong G_{\ksep}/P$ for a parabolic subgroup $P\leqslant G_{\ksep}$. The center $Z(G)_{\ksep}=Z(G_{\ksep})$ is contained in every Borel subgroup \cite[Proposition~17.45]{Mi17}, so $Z(G)_{\ksep}\leqslant P$ and $Z(G)$ acts trivially on $X$. Hence passing from $G$ to $\rfaktor{G}{Z(G)}$ we may assume $G$ to be an adjoint semisimple group. There exists a unique quasi-split group $G^{qs}$ such that $G$ is an inner form of $G^{qs}$ \cite[Corollary 23.53]{Mi17}, thus there exists a $(G,G^{qs})$-bitorsor $\mc{E}\to \Spec k$ such that $G\cong \lfaktor{G^{qs}}{(\mc{E}\times G^{qs})}$ and $G^{qs}\cong \lfaktor{G}{(\mc{E}\times G)}$ for the conjugation actions of the groups on themselves. It follows that $X\cong \lfaktor{G^{qs}}{(\mc{E}\times Y)}$ for the projective variety $Y:=\lfaktor{G}{(\mc{E}\times X)}$ homogeneous under the action of $G^{qs}$. Thus it suffices to show that there is a $G^{qs}$-equivariant isomorphism $Y\cong \rfaktor{G^{qs}}{P}$ for a parabolic subgroup $P\leqslant G^{qs}$. Over the separable closure we have a $(G^{qs})_{\ksep}$-equivariant isomorphism $Y\cong \rfaktor{(G^{qs})_{\ksep}}{P}$ for a parabolic subgroup $P\leqslant (G^{qs})_{\ksep}$, and by \cite[Theorem~25.8]{Mi17} we may assume that $B_{\ksep}\leqslant P$ for a Borel subgroup $B\leqslant G^{qs}$. The variety $\rfaktor{(G^{qs})_{\ksep}}{P}$ is defined over $k$ if and only if $P$ is stable under the action of the Galois group $\Gal(\ksep/k)$, in which case $P$ is defined over $k$ and the claim follows.
\end{proof}

\begin{example} \label{ex:unitary}
    Here we give an example of the algebras that appear in the constructed semiorthogonal decompositions considering a particular instance of a unitary Grassmannian. For more details on unitary groups see \cite{KMRT98}, especially \S~2.B and \S~23 thereof.
    
    Let $L/k$ be a quadratic separable field extension, denote $\tau\in\Gal(L/k)$ the nontrivial element, and let $A$ be a central simple algebra over $L$ of dimension $16$ with a $k$-linear involution $\sigma \colon A\xrightarrow{\simeq} A^{\op}$ whose restriction to $L$ coincides with $\tau$. Recall that an ideal $I\leqslant A$ is \textit{isotropic} if $\sigma(I)\cdot I=0$. Consider the unitary Grassmannian $X:=X_1(A,\sigma)$ parametrizing isotropic right ideals $I\leqslant A$ with $\dim_L I=4$. As an algebraic variety $X$ may be realized as a closed subvariety in $\mr{Res}_{L/k} \mathrm{SB}(A)$, where $\mathrm{SB}(A)$ is the Severi-Brauer variety of dimension $4$ ideals in $A$ and $\mr{Res}_{L/k}$ is the Weil restriction functor. Over the separable closure one has $X_{\ksep}\cong \mathrm{Fl}(1,3;4)$ for the variety of flags $V\leqslant W\leqslant (\ksep)^{\oplus 4}$ with $\dim V=1$, $\dim W=3$.
    
    The unitary Grassmannian $X$ is a projective variety homogeneous under the action of the projective unitary group 
    \[
    G:=\mathrm{PGU}(A,\sigma):=\mathrm{Aut}_L(A,\sigma),
    \]
    which is an outer form of the algebraic group $\PGL_4$. Let $(\mr{M}_4(L),\sigma_h)$ be the central simple algebra of $4\times 4$ matrices over $L$ with the involution given by $\sigma_h(g):=h(\tau(g)^T)h$, where $\tau$ is applied entry-wise and 
    \[
    h:=\begin{pmatrix}
        0 & 0& 0 & 1 \\
        0 & 0& 1 & 0 \\
        0 & 1& 0 & 0 \\
        1 & 0& 0 & 0 
    \end{pmatrix}.
    \]
    The group 
    \[
    G^{qs}:=\mathrm{PGU}(\mr{M}_4(L),\sigma_h)
    \]
    is a quasi-split form of $\PGL_4$. The group $G$ is an inner form of $G^{qs}$ with
    \[
    G\cong \lfaktor{G^{qs}}{(\mc{E} \times G^{qs})}
    \]
    for the conjugation action of $G^{qs}$ on itself and for the right $G^{qs}$-torsor $\mc{E}\to \Spec k$ given by the scheme of isomorphisms $\mc{E}:=\underline{\operatorname{Iso}}_L((\mr{M}_4(L),\sigma_h),(A,\sigma))$ of $L$-algebras with involution. Similarly,
    \[
    X\cong \lfaktor{G^{qs}}{(\mc{E} \times Y)},
    \]
    with $Y$ being the variety of right ideals $I\leqslant \mathrm{M}_4(L)$ of dimension $4$ and isotropic under the involution $\sigma_h$. The variety $Y$ has a rational point given e.g. by the right ideal 
    \[
    I:=\left\{\left.
        (1,0,0,a)^T\cdot (x_1,x_2,x_3,x_4) \,\right|\, x_1,x_2,x_3,x_4\in L \right\} \leqslant \mr{M}_4(L)
    \]
    for some $a\in L$ such that $a+\tau(a)=0$. Thus $Y\cong \rfaktor{G^{qs}}{P}$ for the parabolic subgroup $P\leqslant G^{qs}$ such that $P_L$ becomes the submaximal parabolic subgroup $P_1\cap P_3\leqslant \mathrm{PGL}_4$.

    We have $\Delta(G)=A_3$ and the root system can be realised as $\{e_i- e_j\}_{i\neq j}\subseteq \Z^4$ with the simple roots $\alpha_1:=e_1-e_2$, $\alpha_2:=e_2-e_3$, $\alpha_3:=e_3-e_4$ and fundamental weights $\varpi_1:=\tfrac{3}{4}\alpha_1+\tfrac{1}{2}\alpha_2+\tfrac{1}{4}\alpha_3$, $\varpi_2:=\tfrac{1}{2}\alpha_1+\alpha_2+\tfrac{1}{2}\alpha_3$, $\varpi_3:=\tfrac{1}{4}\alpha_1+\tfrac{1}{2}\alpha_2+\tfrac{3}{4}\alpha_3$. The Weyl group $W\cong S_4$ acts permuting the coordinates and the simple reflections are given by the transpositions $s_1=(12)$, $s_2=(23)$, $s_3=(34)$. The Weyl group of the parabolic subgroup is $W_P=\{e,s_2\}$. For the nontrivial element $\tau\in\Gal(L/k)$ we have $\tau(\alpha_1)=\alpha_3$, $\tau(\alpha_2)=\alpha_2$ and $\tau(\alpha_3)=\alpha_1$, and $\tau(s_1)=s_3$, $\tau(s_2)=s_2$ and $\tau(s_3)=s_1$. It is straightforward to show that the elements of $W^P$, their fields of definitions, the corresponding shifted Steinberg weights, and the Brauer classes of algebras $A_v$ and their twists $\mc{A}^\mc{E}_v$ are as in the following table (see Definitions~\ref{def:Steinberg}, \ref{def:kv}, \ref{def:equiv_Tits} and~\ref{def:descent} for the notation).
    \[
    \begin{array}{c|c|c|c|c|c|c|c|c}
        v & s_3s_1s_2s_3s_1 & s_2s_3s_2s_1,\, s_2s_1s_2s_3 & s_2s_1s_3 & s_3s_2s_1,\, s_1s_2s_3 &  s_1s_3 & s_2s_1 ,\, s_2s_3 & s_1,\,s_3 & e\\ \hline
        ve_v & \varpi_1+\varpi_3 & \varpi_2+\varpi_3,\, \varpi_1+\varpi_2 &  \varpi_2 & \varpi_3,\, \varpi_1 &  \varpi_1+\varpi_3 & \varpi_2,\,\varpi_2 & \varpi_1,\,\varpi_3 & 0 \\
        k(v) & k & L,\,L &  k & L,\,L&  k & L,\, L & L,\,L & k \\
        {[A_v]} & k & \mr{M}_4,\,\mr{M}_4^\op & \mr{M}^\tau_6 & \mr{M}_4^\op,\,\mr{M}_4 &  k & \mr{M}_6,\,\mr{M}_6 & \mr{M}_4,\,\mr{M}_4^\op & k \\
        {[\mc{A}_v^{\mc{E}}]} & k & A,\,A^\op &  B & A^\op,\,A &  k & B_L,\, B_ L & A,\,A^\op & k
    \end{array}
    \]
    Here we group together the elements of $W^P$ that are permuted by $\tau$ and use the following notation:
    \begin{itemize}
        \item 
        $\mr{M}_4:=\mr{M}_4(L)$ equipped with the tautological action of $G^{qs}=\mr{Aut}_L(\mr{M}_4(L),\sigma_h)$,
        \item
        $\mr{M}^\tau_6 = \End_k W$ for the unique irreducible dimension $6$ representation $W$ over $k$ of the simply connected cover $\mathrm{SU}(\mathrm{M}_4(L),\sigma_h)$ of $\mathrm{PGU}(\mathrm{M}_4(L),\sigma_h)$,
        \item 
        $\mr{M}_6:=\mr{M}^\tau_6 \otimes_k L$, $B:= \lfaktor{G^{qs}}{(\mc{E}\times \mr{M}^\tau_6})$, $B_L:=B\otimes_k L$. Note that $B_L$ is Brauer equivalent to $A\otimes_L A$.
    \end{itemize}
    By the proof of Theorem~\ref{thm:twisted_flags} we have a semiorthogonal decomposition
    \[
    \D^b(X) = \langle \Phi_1(\D^b(k)), \Phi_2(\D^b(A)), \Phi_3(\D^b(B)), \Phi_4(\D^b(A)), \Phi_5(\D^b(k)), \Phi_6(\D^b(B_L)), \Phi_7(\D^b(A)), \Phi_8(\D^b(k))\rangle
    \]
    for certain fully faithful functors $\Phi_i$.
\end{example}

\section{Decompositions for representation categories}

\subsection{Relative exceptional collections in the representation--theoretic setting} 
\label{sec:rep_translate}
In this section we recall some generalities on representations of affine algebraic groups, translate the notions of Sections~\ref{sec:recollection_derived} and~\ref{sec:rel_ex} into this setting and set up some notation.

In this section $G$ is an affine algebraic group over $k$ and $P\leqslant G$ is a parabolic subgroup.

\begin{definition} \label{def:rep_rec}
Let $K/k$ be a finite separable field extension. We use the following notation.
\begin{itemize}
    \item $\lrep{K} G$ is the symmetric monoidal abelian category of finite dimensional $K$-representations of $G$. If $K=k$ we put $\rep G:= \lrep{k} G$, and under this notation we have $\lrep{K} G = \rep G_K$.
    \item $\Alg_G K$ the category of monoids in $\lrep{K} G$, that is, the category of finite dimensional algebras over $K$ equipped with $G$-action. $\SAlg_G K$ is the full subcategory of $\Alg_G K$ consisting of the algebras that are separable after forgetting the action of $G$, and $\CSAlg_G K$ is the full category consisting of the algebras that are central simple algebras over $K$ after forgetting the $G$-action.
    \item For $A\in \SAlg_G K$ we denote $\lrep{A} G$ the category of left $A$-modules in $\lrep{K} G$, that is the category of finite dimensional $K$-representations of $G$ equipped with an equivariant left $A$-action. 
    \item $\D^b(\lrep{K} G)$ and $\D^b(\lrep{A} G)$ are the respective bounded derived categories.
\end{itemize} 

For a homomorphism of affine algebraic groups $H\to G$ over $k$ restriction of representations is an exact symmetric monoidal functor $\res^G_H\colon \lrep{K} G \to \lrep{K} H$. For $A\in\SAlg_G K$ we denote $\lrep{A} H:=\lrep{(\res^G_H A)} H$ and 
\[
\res^G_H\colon \D^b(\lrep{A} G) \to \D^b(\lrep{A} H)
\]
the corresponding functor between the derived categories. Below we often omit the restriction functor from the notation. For a parabolic subgroup $P\leqslant G$ the functor
$\res^G_P\colon \D^b(\lrep{A} G) \to \D^b(\lrep{A} P)$ admits a right adjoint denoted
\[
\ind^G_P\colon \D^b(\lrep{A} P) \to \D^b(\lrep{A} G).
\]
We usually omit the restriction functor $\res^G_P$ from the notation, so for $M\in \D^b(\rep G)$ and $X\in \D^b(\rep P)$ we have $M\otimes X:= \res^{G}_P(M)\otimes X$.

Recall that for a subgroup $H\leqslant G$ there is a canonical equivalence $\rep H \cong \Coh_G \rfaktor{G}{H}$ that for $A\in\SAlg_G k$ induces further equivalences $\lrep{A} H\cong \Coh_G (\rfaktor{G}{H},A)$ and $\D^b(\lrep{A} H)\cong \D^b_G(\rfaktor{G}{H},A)$. Under these equivalences restriction and induction of representations correspond to the pullback and direct image functor of sheaves respectively, i.e. the following diagrams commute.
\[
\xymatrix{
 \D^b(\lrep{A} H) \ar[r]^\simeq &  \D^b_G(\rfaktor{G}{H},A)\\
 \D^b(\lrep{A} G)\ar[u]^{\res^G_H} \ar[r]^\simeq &  \D^b_G(\Spec k,A) \ar[u]_{\pi^*}
}
\qquad
\xymatrix{
 \D^b(\lrep{A} P) \ar[r]^\simeq \ar[d]_{\ind^G_P} &  \D^b_G(\rfaktor{G}{P},A) \ar[d]^{\pi_*}\\
 \D^b(\lrep{A} G) \ar[r]^\simeq &  \D^b_G(\Spec k,A) 
}
\]
Here $\pi$ is the structure morphism. In particular, for the categories $\D^b(\lrep{A} G)$ we have the usual formalism of $\otimes$, $\iHom$, $\res^G_P$, $\ind^G_P$, projection formula and base change as in Proposition~\ref{prop:functor_formalism}. 
\end{definition}

\begin{remark}
    For a finite separable field extension $K/k$ we have $K\in \SAlg_G k$ viewed as a separable algebra with trivial $G$-action, and the category $\lrep{K} G$ may be viewed either as the category of finite dimensional $K$-representations or finite dimensional $k$-representations equipped with an equivariant left $K$-action with $G$ acting on $K$ trivially. Below when we write $\lrep{K} G$ for a finite separable field extension $K/k$ we always assume that $G$ acts on $K$ trivially.
\end{remark}

\begin{definition} \label{def:gamma_rep}
    Let $K/k$ be a finite separable field extension. Recall that we have canonical equivalences 
    \[
    \D^b(\lrep{K} G) \cong \D^b_G(\Spec k, K)\cong \D^b_G(\Spec K).
    \]
    We denote
    \[
    (r_{K})_*:=(r_{K/k})_*: \D^b(\lrep{K} G) \to \D^b(\rep G)
    \]
    the restriction of scalars functor, which under the above equivalences coincides with the forgetful functor $\D^b_G(\Spec k, K) \to \D^b_G(\Spec k)$ and with the direct image functor $\D^b_G(\Spec K) \to \D^b_G(\Spec k)$ for the projection $r_K\colon \Spec K\to \Spec k$.

    Let $L/k$ be a finite Galois field extension and denote $\Gamma:=\Gal(L/k)$. For $\gamma\in\Gamma$ we have an autoequivalence $(\Spec \gamma)_*\colon \D^b(\lrep{L} G)\xrightarrow{\simeq} \D^b(\lrep{L} G)$ by taking direct image along the automorphism of $\Spec L$ induced by $\gamma$. Note that $(\Spec \gamma)_*=(\Spec \gamma^{-1})^*$. For $M\in \D^b(\lrep{L} G)$ we put 
    \[
    M^\gamma:=(\Spec \gamma)_*(M),
    \]
    this endows $M\in \D^b(\lrep{L} G)$ with a right action of $\Gamma$ (note that $\Gamma$ tautologically acts on the left on $L$, thus on the right on $\Spec L$, and direct image is covariant).
\end{definition}

Below we repeatedly use the following immediate properties of extension and restriction of scalars.
\begin{lemma} \label{lem:pushpull}
    Let $k\subseteq K\subseteq L$ be a tower of finite separable field extensions with $L/k$ being Galois and denote $\Gamma:=\Gal(L/k)$, $\Gamma_K:=\Gal(L/K)\leqslant \Gamma$. Then the following holds.
        \begin{enumerate}
        \item For $M\in \D^b(\lrep{K} G)$ there is an isomorphism
        \[
        (r_{K})_*(M)\otimes_k L \cong \bigoplus\limits_{\gamma\in \lfaktor{\Gamma_K}{\Gamma}} (M\otimes_K L)^\gamma,
        \]
        with the direct sum taken for a set of representatives of left cosets $\lfaktor{\Gamma_K}{\Gamma}$,
        \item
        For $M\in \D^b(\rep G)$ there is an isomorphism 
        \[
        (r_K)_*(M\otimes K) \cong M^{\oplus [K:k]}.
        \]
    \end{enumerate}
    In particular, extension of scalars is conservative.
\end{lemma}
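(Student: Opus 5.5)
Both isomorphisms are natural in $M$ and compatible with the $G$-action. Both functors involved are exact on the abelian level: extension of scalars $-\otimes_k L$ (or $-\otimes_K L$) and restriction of scalars $(r_K)_*$. So I would prove the claims for a single representation $M\in \lrep{K} G$ (resp. $M\in\rep G$), and they then pass to bounded complexes termwise. The key algebraic input is the standard decomposition of tensor products of fields.

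For (1), I start from the isomorphism of $L$-algebras
\[
K\otimes_k L\xrightarrow{\simeq}\prod_{\gamma\in \lfaktor{\Gamma_K}{\Gamma}} L,\qquad x\otimes y\mapsto (\gamma^{-1}(x)\,y)_\gamma .
\]
This holds because $K/k$ is separable and $L$ contains the Galois closure, so the $k$-embeddings $K\hookrightarrow L$ are exactly the restrictions of $\gamma^{-1}$, indexed by the cosets. Then
\[
(r_K)_*(M)\otimes_k L\cong M\otimes_K(K\otimes_k L)\cong\bigoplus_\gamma M\otimes_{K,\gamma^{-1}} L .
\]
Here $G$ acts only on $M$, so the decomposition is $G$-equivariant. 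It remains to identify $M\otimes_{K,\gamma^{-1}}L$, the base change along the embedding $\gamma^{-1}|_K$, with $(M\otimes_K L)^\gamma=(\Spec\gamma)_*(M\otimes_K L)$. The latter is the same underlying $k$-representation with $L$ acting through $\gamma^{\pm1}$. This is a bookkeeping check with the convention $(\Spec\gamma)_*=(\Spec\gamma^{-1})^*$ from Definition~\ref{def:gamma_rep}. This step is the only place I expect friction: getting left versus right cosets and $\gamma$ versus $\gamma^{-1}$ consistent with the stated right action. If the convention comes out reversed, I would reindex the sum by $\gamma\mapsto\gamma^{-1}$ and use the corresponding choice of representatives.

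For (2), $(r_K)_*(M\otimes_k K)$ is $M\otimes_k K$ viewed as a $k$-representation. Choosing a $k$-basis of $K$ gives $K\cong k^{[K:k]}$ as $k$-vector spaces with trivial $G$-action. Hence $M\otimes_k K\cong M^{\oplus[K:k]}$ in $\rep G$, and the isomorphism extends termwise to complexes.

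For conservativity, suppose $M\otimes_K L=0$ for some $M\in\D^b(\lrep{K} G)$. Applying (2) over $K$ (the tower $K\subseteq L$) gives $(r_{L/K})_*(M\otimes_K L)\cong M^{\oplus[L:K]}$, so $M=0$. The same argument works for $-\otimes_k K$. Applying this to the cone of a morphism shows that extension of scalars reflects isomorphisms, since it is exact.
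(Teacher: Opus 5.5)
Your proposal is correct and follows the paper's proof. The paper derives (1) from base change along the cartesian square identifying $\Spec K\times_{\Spec k}\Spec L$ with $\bigsqcup_{\gamma}\Spec L$, which is the geometric form of your splitting $K\otimes_k L\cong\prod_{\gamma}L$; it derives (2) from the projection formula with $K$ regarded as a trivial $[K:k]$-dimensional representation, exactly as you do, and the convention check you flagged needs no reindexing, since $m\otimes y\mapsto m\otimes\gamma(y)$ is an $L$-linear isomorphism $M\otimes_{K,\gamma^{-1}}L\cong (M\otimes_K L)^\gamma$.
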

\begin{proof}
    The first claim follows from base change for the cartesian square
    \[
        \xymatrix{
        \bigsqcup\limits_{\gamma\in \lfaktor{\Gamma_K}{\Gamma}} \Spec L \ar[rr]^(0.55){\bigsqcup\limits_{\gamma\in \lfaktor{\Gamma_K}{\Gamma}} \Spec \gamma} \ar[d] & & \Spec L \ar[d] \\
        \Spec K \ar[rr]^{r_K} & & \Spec k
        }
    \]

    The second claim follows from the projection formula $(r_K)_*(M\otimes K) \cong M\otimes K$, where $K$ on the right-hand side is considered as a trivial $k$-representation of $G$ of dimension $[K:k]$.
\end{proof}

The following definition is the translation of Definition~\ref{def:relative_decomposition} into representation--theoretic setting.
\begin{definition} \label{def:rel_rep}
    Let $A\in \SAlg_G k$. An object $X\in \D^b(\lrep{A} P)$ is \textit{$G$-relative $A$-exceptional} if the canonical homomorphism $A\to \ind^G_P\iEnd_{\D^b(\rep P)} X$ is an isomorphism. A $G$-relative $A$-exceptional $X$ is also called \textit{$G$-relative separable-exceptional}. If $A\in \CSAlg_G k$ (resp. $A$ is commutative, resp. $A=k$ with the trivial $G$-action) then a $G$-relative $A$-exceptional $X$ is \textit{$G$-relative Azumaya-exceptional (resp. \'etale-exceptional, resp. exceptional)}.
    
    A sequence of objects $X_i\in \D^b(\lrep{A_i} P)$, $1\le i\le n$, is called a \textit{$G$-relative separable-exceptional (resp. Azumaya-exceptional, resp. \'etale-exceptional, resp. exceptional) collection} if 
    \begin{enumerate}
        \item $X_i$ is $G$-relative separable-exceptional (resp. Azumaya-exceptional, resp. \'etale-exceptional, resp. exceptional) for every $1\le i\le n$,
        \item $\ind_P^G \iHom_{\D^b(\rep P)}(X_i,X_j) =0$ for every $1\le j<i\le n$.
    \end{enumerate}

    A $G$-relative separable-exceptional collection $\{X_i\}_{1\le i\le n}$, is \textit{full} if 
    \[
    \D^b(\rep P) = \hull \left( \D^b(\lrep{A_1^\op} G) \otimes_{A_1} X_1,\hdots, \D^b(\lrep{A_n^\op} G) \otimes_{A_n} X_n\right),
    \]
    where on the right stands the smallest triangulated subcategory of $\D^b(\rep P)$ containing $M_i\otimes_{A_i} X_i$ for all $M_i\in \D^b(\lrep{A_i^\op} G)$, $1\le i \le n$.
    \end{definition}

    \begin{proposition} \label{prop:SOD_from_collection_rep}
        Let $X_i\in \D^b(\lrep{A_i} P)$, $1\le i\le n$, be a full $G$-relative separable-exceptional collection. Then
        \begin{enumerate}
            \item the functors $\Phi_i\colon \D^b(\lrep{A_i^{\op}} G) \to \D^b(\rep P)$, $M\mapsto M\otimes_{A_i} X_i$, are fully faithful,
            \item there is a semiorthogonal decomposition
            \[
            \D^b(\rep P)=\langle \Phi_1(\D^b(\lrep{A_1^{\op}} G)),\hdots, \Phi_n(\D^b(\lrep{A_n^{\op}} G))\rangle.
            \]
        \end{enumerate}
    \end{proposition}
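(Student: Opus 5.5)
The plan is to reduce to Proposition~\ref{prop:SOD_from_collection} via the dictionary of Definition~\ref{def:rep_rec}. Take $\mc{W}=\rfaktor{G}{P}$, $S=\Spec k$ with the trivial $G$-action, and $f=\pi$. Under the canonical equivalences $\D^b(\lrep{A} P)\cong \D^b_G(\rfaktor{G}{P},A)$ and $\D^b(\lrep{A} G)\cong \D^b_G(\Spec k,A)$, restriction $\res^G_P$ becomes $\pi^*$ and induction $\ind^G_P$ becomes $\pi_*$.

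The first step is to check that these equivalences respect the rest of the structure. They should be compatible with $\otimes$, $\iHom$ and $\otimes_{A_i}$, since the equivalence $\rep P\cong\Coh_G\rfaktor{G}{P}$ is symmetric monoidal and carries internal Hom to internal Hom, and the derived versions follow. Once this is in place:
\begin{itemize}
\item the canonical map $A\to \ind^G_P\iEnd_{\D^b(\rep P)}X$ of Definition~\ref{def:rel_rep} goes to the structure morphism $\theta_X$ of Definition~\ref{def:thetaX};
\item the vanishing conditions $\ind^G_P\iHom(X_i,X_j)=0$ go to $\pi_*\iHom_{\D^b_G(\rfaktor{G}{P})}(X_i,X_j)=0$;
\item the fullness condition goes to the hull condition of Definition~\ref{def:relative_decomposition}.
\end{itemize}
Consequently a full $G$-relative separable-exceptional collection becomes a full $\Spec k$-relative equivariantly separable-exceptional collection in $\D^b_G(\rfaktor{G}{P})$, and the functors $\Phi_i$ correspond to $M\mapsto \pi^*M\otimes_{\pi^*A_i}X_i$.

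The second step is to apply Proposition~\ref{prop:SOD_from_collection}. Its part (1) gives full faithfulness of the $\Phi_i$, and parts (2) and (4) give the semiorthogonality and the generation, hence the semiorthogonal decomposition. Transporting back along the equivalences yields both claims.

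The main obstacle is the hypothesis in Proposition~\ref{prop:SOD_from_collection} that $\mc{W},S\in\SmGk$ are smooth varieties. This holds here, since $\rfaktor{G}{P}$ is smooth and projective and $\Spec k$ is smooth. Proposition~\ref{prop:functor_formalism} then supplies the adjunctions and the projection formula used in that proof. If one prefers to avoid the translation, the same argument can be run directly in representation-theoretic language using Frobenius reciprocity $\res\dashv\ind$ and the tensor identity $\ind(\res M\otimes X)\cong M\otimes\ind X$. One would compute
\[
\Hom(M\otimes_{A_i}X_i,N\otimes_{A_j}X_j)\cong\Hom_{\D^b(\lrep{A_i^\op\otimes A_j} G)}(M\otimes N^*,\ind^G_P\iHom(X_i,X_j))
\]
and then conclude exactly as in the proof of Proposition~\ref{prop:SOD_from_collection}.
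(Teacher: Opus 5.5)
Your proposal is correct and is essentially the paper's proof: the paper likewise just observes that, in the dictionary of Definitions~\ref{def:rep_rec} and~\ref{def:rel_rep} (with $\mc{W}=\rfaktor{G}{P}$, $S=\Spec k$, $f=\pi$), the two claims are items~1 and~4 of Proposition~\ref{prop:SOD_from_collection}. Your separate appeal to item~2 is harmless but redundant, since the semiorthogonality is already contained in item~4.
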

    \begin{proof}
        These are items 1. and 4. of Proposition~\ref{prop:SOD_from_collection} in the notation of Definitions~\ref{def:rep_rec} and~\ref{def:rel_rep}.
    \end{proof}

\subsection{Relative decompositions for simply connected split groups}   \label{sec:split_sc}
In this section we recall the main results and constructions of \cite{SvdK24}, show some additional orthogonality statements for the semiorthogonal decompositions of loc. cit. and prove a result on partial independence of the constructed exceptional sequence from the choice of a total order on the Weyl group.

In this section $G$ is a simply connected split semisimple group over $k$, and $T\leqslant B\leqslant P\leqslant G$ is a split maximal torus, a Borel subgroup, and a parabolic subgroup respectively. The choice of a Borel subgroup determines the set of simple roots $\Pi$, the chamber of dominant weights $X^*(T)_+\subseteq X^*(T):=\Hom(T,\mathbb{G}_m)$ and fundamental weights $\varpi_{\alpha}\in X^*(T)_+$, $\alpha\in \Pi$, and following \cite[II.1.8]{Jan03} and \cite{SvdK24}  we do this in such a way that $B$ corresponds to the negative roots, i.e. $B$ is generated by $T$ and unipotent subgroups $U_\alpha\leqslant G $ for negative roots $\alpha$. We denote $W:=W(G)$ and $W_P:=W(P)$ the Weyl groups of $G$ and $P$ respectively, $\lebru$ the Bruhat order on $W$ and $W^P$ the set of minimal (in the Bruhat order) representatives for cosets $\rfaktor{W}{W_P}$ \cite[\S~1.10]{Hu92}. The length function on $W$ with respect to the simple reflections $s_\alpha$, $\alpha\in\Pi$, is denoted $l(-)$.

\begin{definition} \label{def:Steinberg}
    For $v\in W$ denote
    \[
    e_v := v^{-1}\sum_{\substack{\alpha\in \Pi\\ v^{-1}\alpha<0}} \varpi_\alpha 
    \]
    the \textit{Steinberg weight} associated to $v$ \cite[\S~2.1]{St75}. Here the sum is taken over all simple roots $\alpha$ such that $v^{-1}\alpha$ is a negative root.
\end{definition}

\begin{definition} \label{def:Pv}
    For $w\in W$ let 
    \[
    Z_w:=\rfaktor{\overline{BwB}}{B}\subseteq \rfaktor{G}{B}
    \]
    be the corresponding \textit{Schubert variety} and put
    \[
    \partial Z_w:= \bigcup_{Z_v\subsetneq Z_w} Z_v.
    \]
    Both $Z_w$ and $\partial Z_w$ are $B$-invariant.

    Let $\lambda\in X^*(T)_+$ be a dominant character, $
    \mathscr{L}\colon \rep B \xrightarrow{\simeq} \Coh_{G} \rfaktor{G}{B} \xrightarrow{} \Coh_{B} \rfaktor{G}{B}$
    be the composition of the canonical equivalence with the restriction of the action from $G$ to $B$, and let $\mathscr{L}(k_{\lambda})$ be the $B$-equivariant line bundle over $\rfaktor{G}{B}$ associated to the one-dimensional representation $k_{\lambda}$ of $B$ with the character $\lambda$. Restricting $\mathscr{L}(k_{\lambda})$ to a closed $B$-invariant subvariety and taking global sections we obtain a representation of $B$. In particular, for $\mu\in X^*(T)$ we have the following representations (see \cite[\S~1.3]{vdK89}, \cite[\S~2.3]{vdK93} and \cite[{\S~3.2--3.3}]{SvdK24} for more details).
    \begin{itemize}
        \item \textit{Dual Joseph module} 
        \[
        \mc{P}(\mu):=\mathrm{H}^0\left( Z_{w^{-1}},\mathscr{L}(k_{w\mu})|_{Z_{w^{-1}}}\right),
        \]
        where $w\in W$ is such that $w\mu \in X^*(T)_+$. The representation up to an isomorphism does not depend on the choice of $w$ \cite[Lemma~2.3.1]{vdK93}.
        \item
        \textit{Relative Schubert module}
        \[
        \mc{Q}(\mu):=\ker \left(\mathrm{H}^0( Z_{w^{-1}},\mathscr{L}(k_{w\mu})|_{Z_{w^{-1}}}) \to \mathrm{H}^0(\partial Z_{w^{-1}},\mathscr{L}(k_{w\mu})|_{\partial Z_{w^{-1}}})\right),
        \]
        where $w\in W$ is the minimal element of $W$ such that $w\mu \in X^*(T)_+$.
    \end{itemize}
    We are mostly interested in the modules $\mc{P}(-e_v)^*, \mc{Q}(e_v) \in \rep B$, $v\in W$, with $e_v\in X^*(T)$ being the Steinberg weights. Put
    \[
    \mc{P}_v:=\mc{P}(-e_{v}),\quad \hat{\mc{P}}_v:=\ind^{P}_{B} \mc{P}_v,\quad 
    \mc{Q}_v:=\mc{Q}(e_{v}),\quad \hat{\mc{Q}}_v:=\ind^{P}_{B} \mc{Q}_v.
    \]
    Recall that by \cite[Lemma~14.15]{SvdK24} for $v\in W^P$ the counit homomorphism $\res_{B}^{P} \ind_{B}^{P} \mathcal{P}_v \to \mathcal{P}_v$ is an isomorphism, in other words, the $B$-representation $\mathcal{P}_v$ is a restriction of the $P$-representation $\hat{\mathcal{P}}_v$. Alternatively, this also follows from the observation that $v^{-1}w_0\cdot(-e_v)\in X^*(T)_+$ and $Z_{v^{-1}w_0}$ is $P$-invariant, which in turn is an immediate consequence of the Bruhat decomposition for $P$ and minimality of $v$.
\end{definition}
    
\begin{definition} \label{def:Xv}
    A subset $I\subseteq W^P$ is an \textit{initial segment} in the Bruhat order if for every $v\in I$ and $w\in W^P$ such that $w\lebru v$ one has $w\in I$. For an initial segment $I\subseteq W^P$ we put $\bar{I}:=W^P\setminus I$ and
    \[
    \hat{\mc{P}}_{I}:=\hull\left(\left\{ M\otimes \hat{\mc{P}}^*_w \right\}_{M\in \rep G,\, w\in I}\right) \subseteq \D^b(\rep P),\quad
    \hat{\mc{Q}}_{\bar{I}}:=\hull\left(\left\{M\otimes \hat{\mc{Q}}_w \right\}_{M\in \rep G,\, w\in \bar{I}}\right)\subseteq \D^b(\rep P).
    \]
    Then \cite[Theorems~14.21 and~14.22]{SvdK24} yield a semiorthogonal decomposition
    \begin{equation}\label{eq:SOD_PQ}
        \D^b(\rep P)= \left\langle \hat{\mc{Q}}_{\bar{I}}, \hat{\mc{P}}_{I}\right\rangle.
    \end{equation}
    Thus the inclusion $(i_{I})_*\colon \hat{\mc{P}}_{I}\to \D^b(\rep P)$ admits a right adjoint $i_{I}^!\colon \D^b(\rep P)\to \hat{\mc{P}}_{I}$ while the inclusion $(j_{\bar{I}})_*\colon \hat{\mc{Q}}_{\bar{I}}\to \D^b(\rep P)$ admits a left adjoint $j_{\bar{I}}^*\colon \D^b(\rep P)\to \hat{\mc{Q}}_{\bar{I}}$.

    Let $\le$ be a total order on $W$ extending the Bruhat order $\lebru$ and for $v\in W^P$ put
    \[
    \begin{gathered}    
    I_{\le v}:=\{w\in W^P\,|\, w\le v\},\quad I_{>v}:=\bar{I}_{\le v}=\{w\in W^P\,|\, v <w\}, \\
    I_{< v}:=\{w\in W^P\,|\, w< v\},\quad I_{\ge v}:=\bar{I}_{< v}=\{w\in W^P\,|\, v \le w\}.
    \end{gathered}    
    \]
    Below we omit the letter $I$ from the notation and write $(i_{\le v})_*:= (i_{I_{\le v}})_*$, $\hat{\mc{P}}_{\le v}:= \hat{\mc{P}}_{I_{\le v}}$, etc. For $v\in W^P$ following \cite[\S~10.2 and~\S~14]{SvdK24} we denote
    \[
    \hat{X}_v:=(j_{\ge v})_*j_{\ge v}^* \hat{\mc{P}}_v^*\in \D^b(\rep P),\quad \hat{Y}_v:=(i_{\le v})_*i_{\le v}^! \hat{\mc{Q}}_v\in \D^b(\rep P).
    \]
    By \cite[\S~11 and Proof of Theorems~14.2,~14.3 and~14.5 on page 60]{SvdK24} we have $\hat{X}_v\cong \hat{Y}_v$.
\end{definition}

\begin{theorem}[{\cite[Theorems~14.2 and~14.3]{SvdK24}}] \label{thm:SOD_SvdK}
    Let $\le$ be a total order on $W$ extending the Bruhat order, enumerate the elements of $W^P$ as $v_n<v_{n-1}<\hdots<v_1$, $n:=\#W^P$, and put $\hat{X}_i:=\hat{X}_{v_i}$ in the notation of Definition~\ref{def:Xv}. Then $\{\hat{X}_i\}_{1\le i\le n}$ is a full $G$-relative exceptional collection in $\D^b(\rep P)$, the functors 
    \[
    \Phi_i\colon \D^b(\rep G) \to \D^b(\rep P), \quad M\mapsto  M\otimes \hat{X}_i,
    \]
    are fully faithful and there is a semiorthogonal decomposition
        \[
        \D^b(\rep P) = \langle \Phi_1(\D^b(\rep G)), \Phi_2(\D^b(\rep G)), \hdots, \Phi_{n}(\D^b(\rep G))\rangle.
        \]
\end{theorem}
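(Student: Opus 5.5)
Since the theorem is quoted from \cite[Theorems~14.2 and~14.3]{SvdK24}, the plan is to assemble it from the ingredients recalled in Definition~\ref{def:Xv} and then invoke Proposition~\ref{prop:SOD_from_collection_rep}. Everything reduces to three properties of the objects $\hat{X}_i=\hat{X}_{v_i}$: each $\hat{X}_i$ is $G$-relative exceptional, the collection is semiorthogonal, and it is full. Once these hold, Proposition~\ref{prop:SOD_from_collection_rep} with $A_i=k$ gives full faithfulness of $\Phi_i$ and the semiorthogonal decomposition.

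\textbf{Semiorthogonality.} Let $j<i$, so $v_i<v_j$. We use the two descriptions $\hat{X}_v\cong (j_{\ge v})_*j^*_{\ge v}\hat{\mc{P}}_v^*\cong (i_{\le v})_*i^!_{\le v}\hat{\mc{Q}}_v$. From the second, $\hat{X}_{v_i}\in\hat{\mc{P}}_{\le v_i}\subseteq \hat{\mc{P}}_{<v_j}$. From the first, $\hat{X}_{v_j}\in\hat{\mc{Q}}_{\ge v_j}$. The decomposition \eqref{eq:SOD_PQ} for $I=I_{<v_j}$ gives $\Hom(\hat{\mc{P}}_{<v_j},\hat{\mc{Q}}_{\ge v_j})=0$. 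Both subcategories are stable under $M\otimes-$ for $M\in\rep G$ (this is built into their definition as hulls). Hence $\Hom_{\D^b(\rep G)}(M,\ind^G_P\iHom(\hat{X}_i,\hat{X}_j))\cong\Hom_P(M\otimes\hat{X}_i,\hat{X}_j)=0$ for all $M$, and taking $M=\ind^G_P\iHom(\hat{X}_i,\hat{X}_j)$ forces it to vanish.

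\textbf{Exceptionality.} We compute $\ind^G_P\iEnd\hat{X}_v$ via $\Hom_P(M\otimes\hat{X}_v,\hat{X}_v)$. By adjunction and $\hat{X}_v\in\hat{\mc{P}}_{\le v}$, this equals $\Hom_P(M\otimes\hat{X}_v,\hat{\mc{Q}}_v)$. The cone of $\hat{X}_v\to\hat{\mc{Q}}_v$ lies in $\hat{\mc{P}}_{\le v}^{\perp}=\hat{\mc{Q}}_{>v}$. Using the other description, $\hat{X}_v$ is left-orthogonal to $\hat{\mc{Q}}_{>v}$ modulo the comparison with $\hat{\mc{P}}^*_v$, so the expression reduces to $\Hom_P(M\otimes\hat{\mc{P}}^*_v,\hat{\mc{Q}}_v)=\Hom_B(M\otimes\mc{P}_v^*,\mc{Q}_v)$. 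The key input is then the van der Kallen duality for dual Joseph and relative Schubert modules: $\ind_B^G(\mc{P}(-\lambda)\otimes\mc{Q}(\lambda))\cong k$, i.e.\ $R\Hom_B(\mc{P}(-e_v)^*,\mc{Q}(e_v))=k$ $G$-equivariantly. This is the excellent-filtration / cohomological pairing theory of \cite{vdK89,vdK93}. I expect it to be the main obstacle: verifying that the canonical map $k\to\ind^G_P\iEnd\hat X_v$ is an isomorphism, rather than merely abstractly so, requires tracking the evaluation pairing through these identifications.

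\textbf{Fullness.} We argue by induction along the total order. Iterating \eqref{eq:SOD_PQ} shows that $\hat{\mc{P}}_{\le v}$ is generated by $\hat{\mc{P}}_{<v}$ together with $\rep G\otimes\hat{X}_v$: the object $\hat{\mc{P}}^*_v$ sits in a triangle with $\hat{X}_v$ and an object of $\hat{\mc{P}}_{<v}$. For $I=W^P$ we have $\hat{\mc{P}}_{W^P}=\D^b(\rep P)$, which is the case $\bar I=\emptyset$ of \eqref{eq:SOD_PQ}. Hence the hull of all $\rep G\otimes\hat{X}_i$ is all of $\D^b(\rep P)$, and Proposition~\ref{prop:SOD_from_collection_rep} finishes the proof.
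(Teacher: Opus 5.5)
Your semiorthogonality and fullness arguments are correct and follow formally from the decompositions \refbr{eq:SOD_PQ}. For fullness, the triangle $(i_{<v})_*i_{<v}^!\hat{\mc{P}}_v^*\to\hat{\mc{P}}_v^*\to\hat{X}_v$ does exactly what you say. Your reduction of exceptionality is also correct, and can be stated more cleanly. Since $\hat{X}_v\cong(i_{\le v})_*i_{\le v}^!\hat{\mc{Q}}_v$ with $M\otimes\hat{X}_v\in\hat{\mc{P}}_{\le v}$, and $\hat{X}_v=(j_{\ge v})_*j_{\ge v}^*\hat{\mc{P}}_v^*$ with $\hat{\mc{Q}}_v\in\hat{\mc{Q}}_{\ge v}$, one gets
\[
\Hom_{\D^b(\rep P)}(M\otimes\hat{X}_v,\hat{X}_v)\cong\Hom_{\D^b(\rep P)}(M\otimes\hat{X}_v,\hat{\mc{Q}}_v)\cong\Hom_{\D^b(\rep P)}(M\otimes\hat{\mc{P}}_v^*,\hat{\mc{Q}}_v).
\]
So $\hat{X}_v$ is $G$-relative exceptional if and only if $\ind^G_P(\hat{\mc{P}}_v\otimes\hat{\mc{Q}}_v)\cong\ind^G_B(\mc{P}(-e_v)\otimes\mc{Q}(e_v))\cong k$.

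The gap is in how you supply this last input. The identity $\ind^G_B(\mc{P}(-\lambda)\otimes\mc{Q}(\lambda))\cong k$ you quote is false for general $\lambda$. For $\lambda$ dominant one has $\mc{Q}(\lambda)=k_\lambda$ and $\mc{P}(-\lambda)=\res^G_B\ind^G_B k_{-w_0\lambda}$. The tensor identity then gives $\ind^G_B k_{-w_0\lambda}\otimes\ind^G_B k_\lambda$, e.g. $V\otimes V$ for $G=\mathrm{SL}_2$ and $\lambda=\varpi$.

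What the theory of \cite{vdK89,vdK93} provides is only the non-equivariant pairing $R\Gamma(B,\mc{P}(-\lambda)\otimes\mc{Q}(\lambda))=k$, i.e. the case $M=k$ of your computation. Upgrading this to all $M\in\rep G$ is exactly where it matters that $e_v$ is a Steinberg weight. This is a substantial part of what \cite{SvdK24} proves, and it is what the paper imports by quoting \cite[Theorems~14.2 and~14.3]{SvdK24}.

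Without this input, your argument only yields $\D^b(\rep P)=\langle\hull(\rep G\otimes\hat{X}_{v_1}),\hdots,\hull(\rep G\otimes\hat{X}_{v_n})\rangle$. Nothing in it rules out $\ind^G_P\iEnd\hat{X}_v$ being a nontrivial algebra object in $\D^b(\rep G)$, in which case the $\Phi_i$ would not be fully faithful. By contrast, the issue you flag as the main obstacle is harmless. Once $\ind^G_P\iEnd\hat{X}_v\cong k$ abstractly, the structure map $k\to k$ sends $1$ to the class of $\id_{\hat{X}_v}\neq 0$, and is therefore an isomorphism.
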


\begin{definition} \label{def:ant}
    Let $(-,-)$ be a $W$-invariant inner product on $X^*(T)\otimes_{\mathbb{Z}} \mathbb{R}$ and $\|-\|$ be the associated norm. For $\lambda,\mu \in X^*(T)$ we say that $\lambda \leant \mu$ if either $\|\lambda\|<\|\mu\|$ or $\lambda =w\nu$, $\mu =v\nu$ with $\nu\in X^*(T)_+$ and $v \lebru w$. Following \cite[\S~1.2]{vdK89} we refer to this order as \textit{antipodal excellent order}. 
    
    For $v\in W^P$ we put
    \[
    J_v:=\{w\in W^P\,|\, e_w\leant e_v\},
    \]
    cf. the definition of $\rm{pre}_{\lambda}$ in the beginning of \cite[Section 11]{SvdK24}.
\end{definition}

\begin{remark}
    It is immediate to see that the above definition is equivalent to the one from \cite[\S~1.2]{vdK89}, since $w\lambda \in X^*(T)_+$ if and only if $-w_0w\lambda\in X^*(T)_+$ for the longest element $w_0\in W$. Furthermore, it is straightforward to check that this definition is also equivalent to \cite[Definition~2]{Ana12}.
\end{remark}

The following variant of the semiorthogonal decomposition~\refbr{eq:SOD_PQ} was essentially proved in \cite{SvdK24}.
\begin{theorem}[{\cite{SvdK24}}] \label{thm:SOD_PQ_with_ant}
    Let $I\subseteq W^P$ be an initial segment in the Bruhat order and $v\in W^P$, put
    \[
    \begin{gathered}
        \hat{\mc{Q}}_{\bar{I}\cap J_v}:=\hull\left(\left\{M\otimes \hat{\mc{Q}}_w \right\}_{M\in \rep G,\, w\in \bar{I}\cap J_v}\right) \subseteq \D^b(\rep P),
        \\    
        \hat{\mc{P}}_{I\cap J_v}:= \hull\left(\left\{M\otimes \hat{\mc{P}}^*_w \right\}_{M\in \rep G,\, w\in I\cap J_v}\right) \subseteq \D^b(\rep P),
    \end{gathered}   
    \]
    and let $\D_{\leant e_v}\subseteq \D^b(\rep P)$ be the full triangulated subcategory of $\D^b(\rep P)$ generated by $M \otimes N$ with $M\in \rep G$ and $N\in \rep P$ such that all weights $\lambda$ of $N$ satisfy $\lambda \leant e_v$. Then there is a semiorthogonal decomposition
    \begin{equation} \label{eq:SOD_PQ_with_ant}
        \D_{\leant e_v}=\left\langle \hat{\mc{Q}}_{\bar{I}\cap J_v},\hat{\mc{P}}_{I\cap J_v} \right\rangle.
    \end{equation}
\end{theorem}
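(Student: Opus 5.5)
The plan is to deduce \refbr{eq:SOD_PQ_with_ant} from the unrestricted decomposition \refbr{eq:SOD_PQ} by intersecting it with $\D_{\leant e_v}$. Semiorthogonality $\Hom(\hat{\mc{P}}_{I\cap J_v},\hat{\mc{Q}}_{\bar I\cap J_v})=0$ is inherited from \refbr{eq:SOD_PQ}, since these are subcategories of $\hat{\mc{P}}_I$ and $\hat{\mc{Q}}_{\bar I}$ respectively. So the work consists of two containments and a generation statement.

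\emph{Containment.} Both $\hat{\mc{Q}}_{\bar I\cap J_v}$ and $\hat{\mc{P}}_{I\cap J_v}$ lie in $\D_{\leant e_v}$. For $w\in J_v$ the $B$-module $\mc{Q}_w=\mc{Q}(e_w)$ is a submodule of a Demazure-type module $\mathrm{H}^0(Z_{x^{-1}},\mathscr{L}(k_{xe_w}))$. Its weights are those of the Demazure module with extremal weight in the $W$-orbit of $e_w$, so each weight $\lambda$ satisfies either $\|\lambda\|<\|e_w\|$ or $\lambda=ue_w$ with the appropriate Bruhat relation. This is exactly the excellent-order property of (dual) Joseph and relative Schubert modules from \cite{vdK89}, which gives $\lambda\leant e_w\leant e_v$. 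One must pass from $B$ to $P$ via $\ind^P_B$. Since $\hat{\mc{P}}_w$ restricts to $\mc{P}_w$ for $w\in W^P$, the weights of $\hat{\mc{P}}_w^*$ are the negatives of those of $\mc{P}(-e_w)$. For $\hat{\mc{Q}}_w$ one uses that $\ind^P_B$ of a module with weights $\leant\mu$ again has weights $\leant\mu$, because the $\leant$-order is compatible with $W_P$-saturation of weight sets. I would cite \cite[\S~11]{SvdK24}, where $\mathrm{pre}_\lambda$ is treated, for these weight estimates.

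\emph{Generation.} I would argue by induction on the finite poset of $\leant$-classes, which is well-founded since norms take discrete values. The goal is to show that every $M\otimes N$ with $N\in\rep P$ having weights $\leant e_v$ lies in $\hull(\hat{\mc{Q}}_{\bar I\cap J_v},\hat{\mc{P}}_{I\cap J_v})$.

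First, filter $N$ by $P$-submodules so that the top $\leant$-layer is isolated. The highest layer contributes $\hat{\mc{P}}_w^*$ or $\hat{\mc{Q}}_w$ for $w$ with $e_w$ in that layer, up to tensoring with $G$-modules and with pieces of strictly smaller order. This is the key input of \cite[\S~11]{SvdK24}: a $P$-module with weights $\leant \lambda$ is, modulo $\D_{<\lambda}$, generated by $\rep G\otimes \hat{\mc{P}}_w^*$, or equivalently by $\rep G\otimes\hat{\mc{Q}}_w$, for $e_w$ in the class of $\lambda$. Both the $\mc{P}$ and the $\mc{Q}$ versions hold there, because $\hat X_w\cong\hat Y_w$.

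Second, choose the $\mc{P}$-generator when $w\in I$ and the $\mc{Q}$-generator when $w\in\bar I$. Then induction gives $\D_{\leant e_v}=\hull(\hat{\mc{Q}}_{\bar I\cap J_v},\hat{\mc{P}}_{I\cap J_v})$.

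The main obstacle is this layer-by-layer generation. One needs that, within a $\leant$-layer, the choice of $\mc{P}$ versus $\mc{Q}$ may be made independently for each $w$. Concretely, the cone of $\hat X_w\to\hat{\mc{P}}_w^*$, and likewise of $\hat{\mc{Q}}_w\to\hat X_w$, must lie in the hull of strictly $\leant$-smaller objects of the relevant type. I expect to extract this from the proof of \cite[Theorems~14.21, 14.22]{SvdK24}, checking that each triangle used there stays within weights $\leant e_v$.
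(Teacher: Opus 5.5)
Your treatment of semiorthogonality agrees with the paper: you inherit it from \refbr{eq:SOD_PQ}, i.e.\ from \cite[Theorem~14.22]{SvdK24}. So does your treatment of the containment $\hat{\mc{Q}}_{\bar I\cap J_v},\hat{\mc{P}}_{I\cap J_v}\subseteq\D_{\leant e_v}$, using weights of $\mc{P}(-e_w)^*$ and $\mc{Q}(e_w)$ from \cite[Theorem~1.6]{vdK89}, the passage from $B$ to $P$, and transitivity of $\leant$. The paper takes the $B$-to-$P$ step from \cite[Lemma~14.16]{SvdK24}.

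The gap is in generation, at the step you yourself flag. You want to justify choosing $\hat{\mc{P}}^*_w$ or $\hat{\mc{Q}}_w$ freely by showing that the cones of the maps relating them to $\hat X_w\cong\hat Y_w$ lie in the span of strictly $\leant$-smaller generators. (These maps go $\hat{\mc{P}}^*_w\to\hat X_w$ and $\hat Y_w\to\hat{\mc{Q}}_w$, not the other way.) A priori these cones only lie in $\hat{\mc{P}}_{<w}$ and $\hat{\mc{Q}}_{>w}$, which are cut out by the auxiliary total order $\le$, not by $\leant$. That they actually lie in the $J_w$-parts is essentially Proposition~\ref{prop:XY_ant}, which the paper deduces \emph{from} Theorem~\ref{thm:SOD_PQ_with_ant}. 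So this route is circular unless you reprove that refinement independently. Also, $\hat X_w\cong\hat Y_w$ is not the reason the $\mc{P}$- and $\mc{Q}$-versions are interchangeable.

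No comparison with $\hat X_w$ is needed. Run your layer-by-layer induction over the finite $\leant$-initial segment $\{\lambda\,|\,\lambda\leant e_v\}$, with the $\rep G$-linear lower parts. Only layers at Steinberg weights $\lambda=e_w$ contribute new generators. At such a layer, $\hat{\mc{P}}^*_w$ and $\hat{\mc{Q}}_w$ lie in each other's hull together with the lower part. This follows directly from the weight information you already used for containment: both have $e_w$ as a weight of multiplicity one, and all their other weights are strictly $\leant$-below $e_w$. Hence both agree, modulo the lower part, with the same simple $L$-module through $e_w$.

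So the choice of $\mc{P}$ or $\mc{Q}$ can be made independently for each $w$. The $w$ that occur are exactly those with $e_w\leant e_v$, i.e.\ $w\in J_v$.

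This is precisely the paper's argument. The weight-bounded generation statement \cite[Theorem~14.18]{SvdK24} is proved by such an induction for $<_{\mathrm{a}}\tau$. Its proof runs verbatim with $\leant e_v$ (or any $\leant$-initial segment) in place of $<_{\mathrm{a}}\tau$. This yields $\D_{\leant e_v}=\hull(\hat{\mc{Q}}_{\bar I\cap J_v},\hat{\mc{P}}_{I\cap J_v})$ without any reference to $\hat X_w$ or to the triangles in \cite[Theorems~14.21, 14.22]{SvdK24}.
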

\begin{proof}
    It follows from \cite[Theorem~1.6]{vdK89} that for $w\in W^P$ all weights $\lambda$ of $\mc{P}(-e_w)^*$ and $\mc{Q}(e_w)$ satisfy $\lambda\leant e_w$, and by \cite[Lemma~14.16]{SvdK24} the same holds for the weights of $\hat{\mc{P}}_w^*$ and $\hat{\mc{Q}}_w$. Hence $\hat{\mc{Q}}_{\bar{I}\cap J_v},\hat{\mc{P}}_{I\cap J_v}\subseteq \D_{\leant e_v}$.
    
    The fact that $\D_{\leant e_v}=\hull\left( \hat{\mc{Q}}_{\bar{I}\cap J_v},\hat{\mc{P}}_{I\cap J_v} \right)$ follows from \cite[Theorem~14.18]{SvdK24}, or rather from the proof of the loc. cit., since the same proof works when one changes "$<_{\mathrm{a}} \tau$" in the statement to "$\leant \tau$" (with $\tau=e_v$ in our case), or, more generally, to "$\in \mathfrak{I}$" with $\mathfrak{I}\subseteq X^*(T)$ being an initial segment with respect to $\leant$.
    
    The inclusion $\hat{\mc{Q}}_{\bar{I}\cap J_v}\subseteq \left(\hat{\mc{P}}_{I\cap J_v}\right)^\perp$ follows from \cite[Theorem~14.22]{SvdK24}. 
\end{proof}

\begin{proposition} \label{prop:XY_ant}
    In the notation of Definition~\ref{def:Xv} and Theorem~\ref{thm:SOD_PQ_with_ant} we have 
    \[
    (j_{\bar{I}})_*j_{\bar{I}}^*(\hat{\mc{P}}_v^*) \in \hat{\mc{Q}}_{\bar{I}\cap J_v}, \quad (i_I)_*i_I^!(\hat{\mc{Q}}_v) \in \hat{\mc{P}}_{I\cap J_v}.
    \]
\end{proposition}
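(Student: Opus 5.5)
The idea is to compare the two semiorthogonal decompositions \refbr{eq:SOD_PQ} and \refbr{eq:SOD_PQ_with_ant}. By essential uniqueness and functoriality of decomposition triangles, it suffices to show that the canonical triangle of an object lying in $\D_{\leant e_v}$ with respect to \refbr{eq:SOD_PQ_with_ant} is also its canonical triangle with respect to \refbr{eq:SOD_PQ}.

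First, $\hat{\mc{P}}_v^*$ and $\hat{\mc{Q}}_v$ lie in $\D_{\leant e_v}$. This holds because all weights of $\hat{\mc{P}}_v^*$ and $\hat{\mc{Q}}_v$ satisfy $\lambda\leant e_v$ (by \cite[Theorem~1.6]{vdK89} and \cite[Lemma~14.16]{SvdK24}, as in the proof of Theorem~\ref{thm:SOD_PQ_with_ant}). We may take $M$ trivial, so both objects are among the generators of $\D_{\leant e_v}$.

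Next, let $K\in\{\hat{\mc{P}}_v^*,\hat{\mc{Q}}_v\}$. Theorem~\ref{thm:SOD_PQ_with_ant} gives a distinguished triangle
\[
K_2\to K\to K_1\to K_2[1]
\]
with $K_2\in\hat{\mc{P}}_{I\cap J_v}$ and $K_1\in\hat{\mc{Q}}_{\bar I\cap J_v}$. By construction $\hat{\mc{P}}_{I\cap J_v}\subseteq\hat{\mc{P}}_I$ and $\hat{\mc{Q}}_{\bar I\cap J_v}\subseteq\hat{\mc{Q}}_{\bar I}$, since these are hulls of subsets of the generating sets. Hence this triangle is also a decomposition triangle for $K$ with respect to $\D^b(\rep P)=\langle\hat{\mc{Q}}_{\bar I},\hat{\mc{P}}_I\rangle$. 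Uniqueness of such triangles then gives $(i_I)_*i_I^!K\cong K_2$ and $(j_{\bar I})_*j_{\bar I}^*K\cong K_1$. Taking $K=\hat{\mc{P}}_v^*$ yields the first claim, and $K=\hat{\mc{Q}}_v$ the second.

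The main obstacle is the weight condition in the first step: we need all weights of $\hat{\mc{P}}_v^*$ and $\hat{\mc{Q}}_v$ to be $\leant e_v$ for $v\in W^P$, including after induction from $B$ to $P$. I would take this directly from the first paragraph of the proof of Theorem~\ref{thm:SOD_PQ_with_ant}, so it carries no real difficulty here. Beyond that, the argument needs only that the semiorthogonality condition of \refbr{eq:SOD_PQ} holds, which is part of \cite[Theorems~14.21 and~14.22]{SvdK24}.
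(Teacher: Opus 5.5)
Your proposal is correct and follows the paper's route. The paper likewise checks that $\hat{\mc{P}}_v^*,\hat{\mc{Q}}_v\in \D_{\leant e_v}$ by the weight argument from the proof of Theorem~\ref{thm:SOD_PQ_with_ant}, and uses the inclusions $\hat{\mc{Q}}_{\bar{I}\cap J_v}\subseteq\hat{\mc{Q}}_{\bar{I}}$ and $\hat{\mc{P}}_{I\cap J_v}\subseteq\hat{\mc{P}}_{I}$ to conclude that the projection functors of the two decompositions agree on $\D_{\leant e_v}$; your uniqueness-of-decomposition-triangle argument is precisely the justification of that compatibility.
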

\begin{proof}
    Since $\hat{\mc{Q}}_{\bar{I}\cap J_v} \subseteq \hat{\mc{Q}}_{\bar{I}}$ and $\hat{\mc{P}}_{I\cap J_v}\subseteq \hat{\mc{P}}_{I}$, the semiorthogonal decompositions~\refbr{eq:SOD_PQ} and~\refbr{eq:SOD_PQ_with_ant} are compatible in a sense that for $N\in \D_{\leant e_v}$ there are isomorphisms
    \[
    (j_{\bar{I}})_*j_{\bar{I}}^* h (N)\cong h(j_{\bar{I}\cap J_v})_*j_{\bar{I}\cap J_v}^* (N),\quad 
    (i_{I})_*i_{I}^!h(N)\cong h(i_{I\cap J_v})_*i_{I\cap J_v}^! (N),
    \]
    where $(j_{\bar{I}\cap J_v})_*\colon \hat{\mc{Q}}_{\bar{I}\cap J_v}\to \D_{\leant e_v}$, $(i_{I\cap J_v})_*\colon \hat{\mc{P}}_{I\cap J_v}\to \D_{\leant e_v}$ and $h\colon \D_{\leant e_v} \to \D^b(\rep P)$ are the inclusions, and $j_{\bar{I}\cap J_v}^*$ and $i_{I\cap J_v}^!$ are the respective left and right adjoint functors. The same argument as in the beginning of the proof of Theorem~\ref{thm:SOD_PQ_with_ant} shows that $\hat{\mc{P}}_v^*,\hat{\mc{Q}}_v\in \D_{\leant e_v}$. The claim follows.
\end{proof}

\begin{corollary} \label{cor:orthogonal}
    In the notation of Theorem~\ref{thm:SOD_SvdK} let $1\le i<j\le n$ and suppose that 
    \[
    [v_j,v_i]\cap J_{v_i}\cap J_{v_j}=\emptyset,
    \]
    where $[v_j,v_i]:=\{v_l\,|\, l\in [i,j]\}\subseteq W^P$. Then for every $M_1,M_2\in \D^b(\rep G)$ one has
    \[
        \Hom_{\D^b(\rep P)}(M_1\otimes \hat{X}_i,M_2\otimes \hat{X}_j)=0,\qquad
        \Hom_{\D^b(\rep P)}(M_2\otimes \hat{X}_j,M_1\otimes \hat{X}_i)=0.
    \]
    In particular, if for every $w\in [v_j,v_i]$ one has 
    \begin{equation} \label{eq:lengths}
    l(v_i)=l(v_j)=l(w),\quad \|e_{v_i}\|=\|e_{v_j}\|=\|e_{w}\|,        
    \end{equation}
    then the subcategories $\Phi_l(\D^b(\rep G))\subseteq \D^b(\rep P)$, $i\le l\le j$, are pairwise orthogonal.
\end{corollary}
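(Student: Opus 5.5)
The plan is to combine Proposition~\ref{prop:XY_ant} with the semiorthogonality statement of \cite[Theorem~14.22]{SvdK24}. By the latter, $\Hom(M\otimes\hat{\mc{P}}^*_w,N\otimes\hat{\mc{Q}}_{w'}[m])=0$ whenever there is an initial segment containing $w$ but not $w'$; for instance, $w<w'$ in the chosen total order suffices, taking $I=I_{\le w}$. Since $M_1,M_2\in\D^b(\rep G)$ can be absorbed into the generators, it suffices to treat $\hat{X}_i$ and $\hat{X}_j$ themselves.

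\textbf{Step 1: locating $\hat X_i$ and $\hat X_j$.} I would apply Proposition~\ref{prop:XY_ant} to both descriptions $\hat X_v\cong\hat Y_v$, using suitable initial segments. With $I=I_{<v}$ applied to $\hat{\mc{P}}_v^*$ we get $\hat X_v\in\hat{\mc{Q}}_{I_{\ge v}\cap J_v}$. With $I=I_{\le v}$ applied to $\hat{\mc{Q}}_v$ we get $\hat X_v\in\hat{\mc{P}}_{I_{\le v}\cap J_v}$. Thus each $\hat X_v$ lies both in the $\hat{\mc{Q}}$-hull of the generators indexed by $\{w\ge v\}\cap J_v$ and in the $\hat{\mc{P}}$-hull of the generators indexed by $\{w\le v\}\cap J_v$.

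\textbf{Step 2: the two Hom-vanishings.} Recall that $v_j<v_i$.
\begin{itemize}
\item For $\Hom(\hat X_i,\hat X_j)$, I would write $\hat X_i\in\hat{\mc{P}}_{I_{\le v_i}\cap J_{v_i}}$ and $\hat X_j\in\hat{\mc{Q}}_{I_{\ge v_j}\cap J_{v_j}}$. It then suffices to show that $\Hom(M\otimes\hat{\mc P}^*_w, N\otimes\hat{\mc Q}_{w'})=0$ for $w\le v_i$ with $w\in J_{v_i}$, and $w'\ge v_j$ with $w'\in J_{v_j}$. If $w<w'$, this is Theorem~14.22. The remaining pairs have $v_j\le w'\le w\le v_i$, so both indices lie in $[v_j,v_i]$.
\item For the other direction I would use the dual placement $\hat X_j\in\hat{\mc P}_{I_{\le v_j}\cap J_{v_j}}$ and $\hat X_i\in\hat{\mc Q}_{I_{\ge v_i}\cap J_{v_i}}$. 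Here every pair $w\le v_j<v_i\le w'$ satisfies $w<w'$, so this Hom vanishes outright by Theorem~14.22.
\end{itemize}

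\textbf{Step 3: the main obstacle.} The difficult case is the residual pairs in the first direction, namely $w,w'\in[v_j,v_i]$ with $w\in J_{v_i}$ and $w'\in J_{v_j}$. To exploit the hypothesis $[v_j,v_i]\cap J_{v_i}\cap J_{v_j}=\emptyset$, I would avoid comparing the two hulls naively. Instead I would work inside the ant-truncated category and apply Proposition~\ref{prop:XY_ant} with an initial segment adapted to both $v_i$ and $v_j$. Concretely, I would take $I=I_{<v_j}\cup\bigl([v_j,v_i]\cap J_{v_i}\bigr)$, after enlarging it to a Bruhat-initial segment. Then $\hat X_i$ should lie in $\hat{\mc P}_{I\cap J_{v_i}}$ and $\hat X_j$ in $\hat{\mc Q}_{\bar I\cap J_{v_j}}$. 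The emptiness hypothesis, together with the fact that the sets $J_v$ are downward closed for $\leant$, is exactly what should make these placements legitimate. The semiorthogonal decomposition~\refbr{eq:SOD_PQ_with_ant} would then give the vanishing. Checking that this $I$ is Bruhat-initial and that the projections land where claimed is the step I expect to require real care.

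\textbf{Step 4: the ``in particular'' clause.} Assume~\refbr{eq:lengths} holds. Suppose $w\ne u$ in $[v_j,v_i]$ satisfy $e_w\leant e_u$. Since the norms are equal, the defining condition forces $e_w=x\nu$ and $e_u=y\nu$ with $y\lebru x$ and $\nu$ dominant. I would then use the relation between $v$ and $e_v$ (Definition~\ref{def:Steinberg}), together with the equal lengths, to deduce that $w$ and $u$ would be Bruhat-comparable of equal length, hence equal, a contradiction. Consequently $J_{v}\cap[v_j,v_i]\subseteq\{v\}$ for every $v$ in the interval. The hypothesis of the first part then holds for every pair $i\le l<l'\le j$, which gives the pairwise orthogonality of the subcategories $\Phi_l(\D^b(\rep G))$.
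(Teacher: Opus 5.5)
Your Steps~1, 2 and~4 follow the paper's argument: the placements come from Proposition~\ref{prop:XY_ant}, the reverse vanishing is plain semiorthogonality, and Step~4 checks the emptiness hypothesis under~\refbr{eq:lengths}. The gap is Step~3, the only place the hypothesis is used, and it does not go through.

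To identify the projections for your $I$ with $\hat{X}_i$ and $\hat{X}_j$ (Proposition~\ref{prop:projections_independent}) you need two things. First, $I\cap J_{v_i}=I_{\le v_i}\cap J_{v_i}$, so $v_i\in I$. Second, $I\cap J_{v_j}=I_{<v_j}\cap J_{v_j}$, so $I$ must avoid $[v_j,v_i]\cap J_{v_j}\ni v_j$. For a Bruhat-initial $I$ these conflict as soon as some $u\in[v_j,v_i]\cap J_{v_j}$ lies Bruhat-below some $x\in[v_j,v_i]\cap J_{v_i}$, for instance $u=v_j\lebru v_i=x$.

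The hypothesis together with transitivity of $\leant$ excludes this only if $e_u$ and $e_x$ are $\leant$-comparable. That holds when the norms differ, or when both weights lie in one $W$-orbit (then $u\lebru x$ gives $e_x\leant e_u$). It fails for equal norms in distinct orbits. The paper's proof has the same hole. It uses the sharper input $\ind^G_P(\hat{\mc{P}}_{w_1}\otimes\hat{\mc{Q}}_{w_2})=0$ unless $w_2\lebru w_1$; your form of \cite[Theorem~14.22]{SvdK24} gives this too, taking $I=\{u\in W^P\mid u\lebru w\}$. But its step ``otherwise $e_{w_1}\leant e_{w_2}$'' fails exactly in this equal-norm, distinct-orbit case.

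In fact the first assertion is false there, so Step~3 cannot be repaired. Take the following data:
\begin{itemize}
\item $G=\mathrm{SL}_3$ and $P=B$, so hats can be dropped;
\item the total order $e<s_1<s_2<s_1s_2<s_2s_1<w_0$;
\item $v_i=s_2s_1$ and $v_j=s_1$.
\end{itemize}
Then $e_{s_1}=s_1\varpi_1$ and $e_{s_2s_1}=s_1s_2\varpi_2=-\varpi_1$ have equal norms but lie in different orbits. One computes $J_{s_2s_1}=\{e,s_2s_1\}$ and $J_{s_1}=\{e,s_1,s_1s_2\}$, so the hypothesis holds.

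Since $\varpi_1,\varpi_2$ are minuscule, the relative Schubert modules are one-dimensional: $\mc{Q}_{s_1}=k_{s_1\varpi_1}$, $\mc{Q}_{s_1s_2}=k_{-\varpi_2}$ and $\mc{Q}_{s_2s_1}=k_{-\varpi_1}$. As $-e_{s_2s_1}=\varpi_1$ is dominant, $\mc{P}^*_{s_2s_1}=k_{-\varpi_1}=\mc{Q}_{s_2s_1}$, hence $\hat{X}_i=k_{-\varpi_1}$. Next, $\mc{P}_{s_1}=\mathrm{H}^0(Z_{s_2},\mathscr{L}(k_{\varpi_2})|_{Z_{s_2}})$ has weights $\varpi_2$ and $s_2\varpi_2$. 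So $\mc{P}^*_{s_1}$ is an extension of $\mc{Q}_{s_1}$ and $\mc{Q}_{s_1s_2}$, lies in $\hat{\mc{Q}}_{\ge s_1}$, and $\hat{X}_j=\mc{P}^*_{s_1}$.

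Now $\ind^G_B\iHom(\hat{X}_i,\hat{X}_j)=\ind^G_B(k_{\varpi_1}\otimes\mc{P}^*_{s_1})$ has composition factors $k_{\varpi_2}$ and $k_{s_2\varpi_2}$. Since $\ind^G_B k_{s_2\varpi_2}=0$, it equals $\nabla_{\varpi_2}=\ind^G_B k_{\varpi_2}\neq 0$. Therefore $\Hom(\nabla_{\varpi_2}\otimes\hat{X}_i,\hat{X}_j)\neq 0$.

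What survives is the ``in particular'' clause, which is all that is used later (Proposition~\ref{prop:more_orthogonality}, Theorem~\ref{thm:SOD_rep_qs}). You should prove it directly rather than deduce it from the general statement. Under~\refbr{eq:lengths} your Step~4 gives $J_v\cap[v_j,v_i]=\{v\}$ for every $v$ in the interval. Strict Bruhat-predecessors of $v_l$ have smaller length, so they lie outside $[v_j,v_i]$ and hence below $v_j$. Consequently, for $i\le l<l'\le j$ the set $I:=I_{<v_{l'}}\cup\{v_l\}$ is Bruhat-initial and satisfies both conditions above for the pair $(v_l,v_{l'})$. The decomposition~\refbr{eq:SOD_PQ} then gives the vanishing.
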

\begin{proof}
    The second vanishing follows from the semiorthogonality property in Theorem~\ref{thm:SOD_SvdK}. For the first one recall that 
    \[
    \hat{X}_i\cong (i_{\le v_i})_*i_{\le v_i}^!(\hat{\mc{Q}}_{v_i}),\quad
    \hat{X}_j= (j_{\ge v_j})_*j_{\ge v_j}^*(\hat{\mc{P}}_{v_j}^*),
    \]
    hence Proposition~\ref{prop:XY_ant} yields
    \[
    M_1\otimes \hat{X}_i \in \hat{\mc{P}}_{(I_{\le v_i})\cap J_{v_i}},\quad M_2\otimes \hat{X}_j\in \hat{\mc{Q}}_{(I_{\ge v_j})\cap J_{v_j}}.
    \]
    Then it suffices to show that for every $N_1,N_2\in \D^b(\rep G)$ and $w_1\in (I_{\le v_i})\cap J_{v_i}$, $w_2\in (I_{\ge v_j})\cap J_{v_j}$ we have
    \begin{equation} \label{eq:PQ_orth}
        \Hom_{\D^b(\rep P)}(N_1\otimes \hat{\mc{P}}^*_{w_1}, N_2\otimes \hat{\mc{Q}}_{w_2})=0.
    \end{equation}
    Using duality and res-ind adjunction we obtain
    \[
        \Hom_{\D^b(\rep P)}(N_1\otimes \hat{\mc{P}}^*_{w_1},N_2\otimes \hat{\mc{Q}}_{w_2})
        \cong\Hom_{\D^b(\rep G)}(N_1\otimes N_2^*, \ind^G_P (\hat{\mc{P}}_{w_1}\otimes \hat{\mc{Q}}_{w_2})).
    \]
    By \cite[Theorem~14.22.2]{SvdK24} we have $\ind^G_P (\hat{\mc{P}}_{w_1}\otimes \hat{\mc{Q}}_{w_2})=0$ unless $w_2\lebru w_1$. Thus in order for vanishing~\refbr{eq:PQ_orth} to fail we should have $w_1,w_2\in W^P$ such that
    \[
    w_1\le v_i,\quad w_2\ge v_j,\quad w_2\lebru w_1,\quad e_{w_1}\leant e_{v_i},\quad e_{w_2}\leant e_{v_j}.
    \]
    We claim that no such $w_1,w_2$ exist. Indeed, since $\le$ refines the Bruhat order,  the first three inequalities yield $w_1,w_2\in [v_j,v_i]$. If $\|e_{w_2}\|< \|e_{w_1}\|$ then $e_{w_2}\leant e_{w_1}$ and $w_2\in [v_j,v_i]\cap J_{v_i}\cap J_{v_j}$, otherwise ${e_{w_1}\leant e_{w_2}}$ and $w_1\in [v_j,v_i]\cap J_{v_i}\cap J_{v_j}$. In both cases this leads to a contradiction since by the assumption ${[v_j,v_i]\cap J_{v_i}\cap J_{v_j}=\emptyset}$. Hence the vanishing~\refbr{eq:PQ_orth} holds as claimed.

    For the last claim note that the assumption~\refbr{eq:lengths} yields that for every $w\in [v_j,v_i]\setminus \{v_i\}$ we have $e_{w}\not\leant e_{v_i}$, and also $e_{v_i}\not\leant e_{v_j}$, thus $[v_j,v_i]\cap J_{v_i}\cap J_{v_j}=\emptyset$.
\end{proof}

\begin{proposition} \label{prop:projections_independent}
    Let $I_1,I_2\subseteq W^P$ be initial segments in the Bruhat order and $v\in W^P$. Suppose that $I_1\cap J_v= I_2\cap J_v$. Then
    \[
    (j_{\bar{I}_1})_*j_{\bar{I}_1}^*(\hat{\mc{P}}_v^*) \cong (j_{\bar{I}_2})_*j_{\bar{I}_2}^*(\hat{\mc{P}}_v^*),\quad
    (i_{I_1})_*i_{I_1}^!(\hat{\mc{Q}}_v)\cong (i_{I_2})_*i_{I_2}^!(\hat{\mc{Q}}_v).
    \]
\end{proposition}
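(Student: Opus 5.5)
The key point is that, by Proposition~\ref{prop:XY_ant} and its proof, both projections of $\hat{\mc{P}}_v^*$ and $\hat{\mc{Q}}_v$ can be computed inside the smaller category $\D_{\leant e_v}$, where they depend only on $I\cap J_v$. So the plan is to compute each projection there using the decomposition \refbr{eq:SOD_PQ_with_ant} of Theorem~\ref{thm:SOD_PQ_with_ant}, and observe that this decomposition is literally the same for $I_1$ and $I_2$.

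First, set $I:=I_1$ and $I':=I_2$ with $I\cap J_v=I'\cap J_v$. Then
\[
\bar{I}\cap J_v=(W^P\setminus I)\cap J_v = J_v\setminus (I\cap J_v)=J_v\setminus(I'\cap J_v)=\bar{I'}\cap J_v .
\]
Consequently the subcategories $\hat{\mc{P}}_{I\cap J_v}=\hat{\mc{P}}_{I'\cap J_v}$ and $\hat{\mc{Q}}_{\bar{I}\cap J_v}=\hat{\mc{Q}}_{\bar{I'}\cap J_v}$ of $\D_{\leant e_v}$ coincide. The two semiorthogonal decompositions \refbr{eq:SOD_PQ_with_ant} of $\D_{\leant e_v}$, the one for $I$ and the one for $I'$, are therefore identical. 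In particular, the projection functors $(j_{\bar{I}\cap J_v})_*j^*_{\bar{I}\cap J_v}$ and $(i_{I\cap J_v})_*i^!_{I\cap J_v}$ on $\D_{\leant e_v}$ agree for the two segments, up to canonical isomorphism, by uniqueness of the decomposition triangle.

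Second, invoke the compatibility established in the proof of Proposition~\ref{prop:XY_ant}: for $N\in\D_{\leant e_v}$ and any initial segment $I$,
\[
(j_{\bar I})_*j_{\bar I}^*h(N)\cong h(j_{\bar I\cap J_v})_*j^*_{\bar I\cap J_v}(N),\qquad (i_I)_*i_I^!h(N)\cong h(i_{I\cap J_v})_*i^!_{I\cap J_v}(N).
\]
This holds because the triangle $M_2\to N\to M_1$ from \refbr{eq:SOD_PQ_with_ant} has $M_1\in\hat{\mc{Q}}_{\bar I\cap J_v}\subseteq\hat{\mc{Q}}_{\bar I}$ and $M_2\in\hat{\mc{P}}_{I\cap J_v}\subseteq \hat{\mc{P}}_I$. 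By uniqueness it is therefore also the decomposition triangle for \refbr{eq:SOD_PQ}.

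Third, apply this with $N=\hat{\mc{P}}_v^*$ and $N=\hat{\mc{Q}}_v$. Both lie in $\D_{\leant e_v}$ by the weight argument (\cite[Theorem~1.6]{vdK89} and \cite[Lemma~14.16]{SvdK24}) recalled in the proof of Theorem~\ref{thm:SOD_PQ_with_ant}. Comparing the resulting expressions for $I_1$ and $I_2$ gives the claimed isomorphisms.

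There is no serious obstacle. The only point needing care is the uniqueness of the decomposition triangles, which is standard, and the fact that equal subcategories give canonically isomorphic adjoints.
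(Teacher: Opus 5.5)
Your proposal is correct and follows the paper's proof essentially verbatim. Like the paper, you use the compatibility argument from the proof of Proposition~\ref{prop:XY_ant} to reduce both projections to the decomposition \refbr{eq:SOD_PQ_with_ant} of $\D_{\leant e_v}$, which depends only on $I\cap J_v$; you also spell out the identity $\bar{I}\cap J_v=J_v\setminus(I\cap J_v)$ and the membership $\hat{\mc{P}}_v^*,\hat{\mc{Q}}_v\in\D_{\leant e_v}$, which the paper leaves implicit.
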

\begin{proof}
    The same argument as in the proof of Proposition~\ref{prop:XY_ant} shows that
    \[
        (j_{\bar{I_1}})_*j_{\bar{I}_1}^* h (\hat{\mc{P}}_v^*)\cong h (j_{\bar{I}_1\cap J_v})_*j_{\bar{I}_1\cap J_v}^* (\hat{\mc{P}}_v^*),\quad 
    (j_{\bar{I_2}})_*j_{\bar{I}_2}^* h (\hat{\mc{P}}_v^*)\cong h (j_{\bar{I}_2\cap J_v})_*j_{\bar{I}_2\cap J_v}^* (\hat{\mc{P}}_v^*),
     \]
    with the notation as in the proof of Proposition~\ref{prop:XY_ant}. Since the right-hand sides are isomorphic by assumption, the first claimed isomorphism follows. The second one is obtained by a similar reasoning.
\end{proof}

\begin{corollary} \label{cor:Xv_independent}
    Let $\le$ and $\le'$ be total orders on $W$ extending the Bruhat order $\lebru$. For $v\in W^P$ consider $\hat{X}_v,\hat{X}'_v\in \D^b(\rep P)$ constructed as in Definition~\ref{def:Xv} for $\le$ and $\le'$ respectively. Suppose that ${I_{\le v}\cap J_v = I_{\le' v}\cap J_v}$. Then $\hat{X}_v\cong \hat{X}'_v$.
\end{corollary}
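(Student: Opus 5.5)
By Definition~\ref{def:Xv}, $\hat{X}_v=(j_{\ge v})_*j_{\ge v}^*\hat{\mc{P}}_v^*$ with $I_{\ge v}=\bar{I}_{<v}$, while $\hat{X}'_v$ is the analogous object for $\le'$. The hypothesis, however, concerns $I_{\le v}$. The plan is to use the alternative description $\hat{X}_v\cong\hat{Y}_v=(i_{\le v})_*i_{\le v}^!\hat{\mc{Q}}_v$, recalled from \cite{SvdK24} at the end of Definition~\ref{def:Xv}. The same description holds for $\le'$, i.e. $\hat{X}'_v\cong\hat{Y}'_v=(i_{\le' v})_*i_{\le' v}^!\hat{\mc{Q}}_v$. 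It therefore suffices to prove $\hat{Y}_v\cong\hat{Y}'_v$.

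Both $I_1:=I_{\le v}$ and $I_2:=I_{\le' v}$ are initial segments of $W^P$ in the Bruhat order, because $\le$ and $\le'$ extend $\lebru$: if $w\lebru u\le v$ with $w\in W^P$, then $w\le u\le v$. By assumption $I_1\cap J_v=I_2\cap J_v$. Proposition~\ref{prop:projections_independent}, second isomorphism, then gives $(i_{I_1})_*i_{I_1}^!(\hat{\mc{Q}}_v)\cong(i_{I_2})_*i_{I_2}^!(\hat{\mc{Q}}_v)$, that is, $\hat{Y}_v\cong\hat{Y}'_v$. Combining this with $\hat{X}_v\cong\hat{Y}_v$ and $\hat{X}'_v\cong\hat{Y}'_v$ yields the claim.

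The only point needing care is that the isomorphism $\hat{X}_v\cong\hat{Y}_v$ from \cite{SvdK24} is valid for an arbitrary total order extending the Bruhat order, so that it applies to both $\le$ and $\le'$. This is how it is quoted in Definition~\ref{def:Xv}, so no further work is required. One could instead try to compare the $\hat{X}$-descriptions directly via the first isomorphism of Proposition~\ref{prop:projections_independent} with $I=I_{<v}$. That route would need $I_{<v}\cap J_v=I_{<'v}\cap J_v$, which follows from the hypothesis by removing $v$ (note $v\in J_v$ and $v\in I_{\le v}\cap I_{\le' v}$). Either route works, but the $\hat{Y}$ route matches the hypothesis exactly.
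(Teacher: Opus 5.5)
Your proposal is correct and is exactly the paper's argument. You rewrite $\hat{X}_v\cong(i_{\le v})_*i_{\le v}^!\hat{\mc{Q}}_v$ and $\hat{X}'_v\cong(i_{\le' v})_*i_{\le' v}^!\hat{\mc{Q}}_v$ via the identification recalled in Definition~\ref{def:Xv}, then apply the second isomorphism of Proposition~\ref{prop:projections_independent} to the initial segments $I_{\le v}$ and $I_{\le' v}$. Your alternative route through $I_{<v}$ and the first isomorphism is also valid, since $\leant$ is reflexive and hence $v\in J_v$, and it has the small advantage of not relying on the cited isomorphism $\hat{X}_v\cong\hat{Y}_v$.
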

\begin{proof}
    Recall that $\hat{X}_v\cong (i_{I_{\le v}})_*i_{I_{\le v}}^!(\hat{\mc{Q}}_v)$ and $\hat{X}'_v\cong (i_{I_{\le' v}})_*i_{I_{\le' v}}^!(\hat{\mc{Q}}_v)$. Since $I_{\le v}\cap J_v = I_{\le' v}\cap J_v$ by assumption, the claim follows from Proposition~\ref{prop:projections_independent}.
\end{proof}

\begin{proposition} \label{prop:more_orthogonality}
    Let $\le$ be a total order on $W$ satisfying the following properties:
    \begin{itemize}
        \item if $v\le w$ then $l(v) \le l(w)$,
        \item if $v\le w$ and $l(v) = l(w)$ then $\| e_v\| \le \|e_w\|$.
    \end{itemize}
    Then in the notation of Definition~\ref{def:Xv} and Theorem~\ref{thm:SOD_SvdK} the following holds.
    \begin{enumerate}
        \item For $v\in W^P$ the isomorphism class of the object $\hat{X}_v\in \D^b(\rep P)$ does not depend on the choice of the total order (satisfying the above properties).
        
        \item
        For $1\le i,j\le n$ such that $l(v_i)=l(v_j)$ and $\|e_{v_i}\|= \|e_{v_j}\|$ the subcategories $\Phi_i(\D^b(\rep G))$, $\Phi_j(\D^b(\rep G))\subseteq \D^b(\rep P)$ are pairwise orthogonal.
    \end{enumerate}
\end{proposition}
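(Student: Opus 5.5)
For part 1 the plan is to reduce to Corollary~\ref{cor:Xv_independent}: given two total orders $\le$ and $\le'$ on $W$ with the stated properties, it suffices to show $I_{\le v}\cap J_v=I_{\le' v}\cap J_v$ for each $v\in W^P$. Both orders extend the Bruhat order (for the corollary to apply), since $w\lebru v$ with $w\neq v$ forces $l(w)<l(v)$, hence $w\le v$ and $w\le' v$. I will show that for $w\in J_v$ one has $w\le v$ exactly when one of the following holds, a condition that does not mention the order:
\begin{itemize}
    \item $l(w)<l(v)$;
    \item $l(w)=l(v)$ and $\|e_w\|<\|e_v\|$;
    \item $w=v$.
\end{itemize}

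To see this, take $w\in J_v$, so $e_w\leant e_v$, and split according to the definition of $\leant$.

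If $\|e_w\|<\|e_v\|$, then a length comparison decides: when $l(w)\neq l(v)$, the first property gives $w\le v$ iff $l(w)<l(v)$. When $l(w)=l(v)$, the second property forces $w\le v$, since $v\le w$ would give $\|e_v\|\le\|e_w\|$.

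Otherwise $e_w=u\nu$ and $e_v=u'\nu$ with $\nu\in X^*(T)_+$ and $u'\lebru u$. Here I need one fact: for $w,v\in W$ with $e_w,e_v$ in the same $W$-orbit, $u'\lebru u$ forces $w\lebru v$. This should follow from the explicit form $e_v=v^{-1}\varpi_{D(v)}$, where $D(v)$ is the set of simple roots $\alpha$ with $v^{-1}\alpha<0$. Then $e_w$ and $e_v$ share the dominant representative $\varpi_D$, so $D(w)=D(v)=D$. The set $W_{e_v}$ of elements $x$ with $x e_v$ dominant is the coset $W_D' v$, where $W_D'$ is the stabilizer of $\varpi_D$, i.e. the parabolic subgroup generated by the simple reflections outside $D$. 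The condition $D(v)=D$ makes $v$ minimal in this coset. Comparing the minimal representatives $v,w$ of $W_D'v, W_D'w$ with the Bruhat relation between the corresponding weights then gives $w\lebru v$ (if the direction comes out reversed, I will recheck against the convention of \cite{vdK89}). Hence $w\le v$ in every order extending $\lebru$. It remains to check that in this case either $w=v$ or $l(w)<l(v)$, consistent with the criterion. I expect this to be the main obstacle: the precise comparison between the Bruhat order on $W$ and the excellent order on the orbit of $\varpi_D$.

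For part 2, I will use the ordering properties to show that all $w$ lying between $v_i$ and $v_j$ in the total order satisfy~\refbr{eq:lengths}. Suppose $v_j<w<v_i$ and $l(v_i)=l(v_j)$. The first property squeezes $l(w)=l(v_i)$. The second property then squeezes $\|e_{v_j}\|\le\|e_w\|\le\|e_{v_i}\|$, so these norms are all equal. The last claim of Corollary~\ref{cor:orthogonal} then gives pairwise orthogonality of the subcategories $\Phi_l(\D^b(\rep G))$ for $i\le l\le j$, in particular of $\Phi_i$ and $\Phi_j$.
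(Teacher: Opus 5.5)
Your proposal follows the paper's proof, which simply invokes Corollary~\ref{cor:Xv_independent} for part 1 and the last claim of Corollary~\ref{cor:orthogonal} for part 2. Your squeezing argument for \refbr{eq:lengths} and your order-free description of $I_{\le v}\cap J_v$ are the verifications the paper leaves implicit.

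One correction: in the equal-norm case you have the direction reversed.
\begin{itemize}
\item Since $D(v)=D(w)=D$, the elements $v^{-1}$ and $w^{-1}$ are the minimal representatives of the left cosets $v^{-1}W_D'$ and $w^{-1}W_D'$.
\item Projection to minimal coset representatives is Bruhat-monotone, so $u'\lebru u$ gives $v^{-1}\lebru w^{-1}$, i.e.\ $v\lebru w$, not $w\lebru v$.
\item For instance, in type $A_2$ one has $e_{s_1s_2}=s_2s_1\varpi_1\leant s_1\varpi_1=e_{s_1}$, while $s_1\lebru s_1s_2$.
\end{itemize}
Hence a $w\neq v$ in this case satisfies $l(w)>l(v)$. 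It lies \emph{outside} $I_{\le v}$ for every admissible order, not inside, and your sentence ``hence $w\le v$ in every order extending $\lebru$'' should be reversed.

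Your order-free criterion is unaffected, because it only uses that $v$ and $w$ are Bruhat-comparable. Comparability forces $w=v$ when the lengths agree, and otherwise the length comparison decides membership in $I_{\le v}$.
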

\begin{proof}
    The first claim follows from Corollary~\ref{cor:Xv_independent}, the second one from Corollary~\ref{cor:orthogonal}.
\end{proof}

\subsection{Quasi-split simply connected case}
\label{sec:qs_sc}
In this section we generalize the results of \cite{SvdK24} to the case of a simply connected quasi-split semisimple group constructing a full $G$-relative \'etale-exceptional collection in $\D^b(\rep P)$.

Recall that a semisimple group $G$ over a field $k$ is \textit{quasi-split} if there exists a Borel subgroup $B\leqslant G$ \cite[Definition~17.67]{Mi17}. In this section $G$ is a quasi-split simply connected semisimple group over $k$ and $T\leqslant B\leqslant P\leqslant G$ is a maximal (possibly non-split) torus, a Borel subgroup, and a parabolic subgroup respectively.

\begin{definition}
The \textit{splitting field of $G$} is the finite Galois field extension $L/k$ corresponding to the kernel of the action of the absolute Galois group $\Gal(\ksep/k)$ on the lattice $\Hom(T_{k^{\mathrm{sep}}},(\mathbb{G}_m)_{k^{\mathrm{sep}}})$. Note that since all Borel subgroups are conjugate over $k$ \cite[Theorem~25.8]{Mi17}, the isomorphism class of $T$ does not depend on the choice of $T\leqslant B\leqslant G$, hence $L/k$ depends (up to an isomorphism) only on $G$ and not on $T$. The extension $L/k$ is the minimal field extension of $k$ such that $G_L$ is split. We denote $\Gamma:=\Gal(L/k)$ the Galois group of the splitting field of $G$ and put $X^*(T):=\Hom(T_{L},\mathbb{G}_m)$ viewed as a right $\Gamma$-module (this action is compatible with the $\Gamma$-action on the representation categories introduced in Definition~\ref{def:gamma_rep}). Since $G_L$ is split, over $L$ we have all the combinatorial gadgets considered in Section~\ref{sec:split_sc}, in particular, we have the set of simple roots $\Pi\subseteq X^*(T)$, the chamber of dominant weights $X^*(T)_+\subseteq X^*(T)$ generated by fundamental weights $\varpi_{\alpha}\in X^*(T)_+$, $\alpha\in \Pi$, and as in Section~\ref{sec:split_sc}, $B_L$ corresponds to the negative roots. We denote $W:=W(G):=W(G_{L})$ and $W_P:=W(P_{L})$ the Weyl groups of $G_{L}$ and $P_{L}$ respectively, and $W^P$ the set of minimal (in Bruhat order) representatives for cosets $W/W_P$ \cite[\S~1.10]{Hu92}. Since $B$ is defined over $k$, the group $\Gamma$ permutes simple roots and, furthermore, this permutation action gives an embedding $\Gamma\leqslant \mathrm{Aut}(\Delta)$ of $\Gamma$ into the automorphism group of the Dynkin diagram $\Delta:=\Delta(G_{L})$. All the combinatorial data comes canonically equipped with the right $\Gamma$-action in a compatible way, in particular, $\Gamma$ acts on $X^*(T)_+$, permutes fundamental weights, and acts by automorphisms on $W$ and $W_P$ in such a way that $(w\lambda)^{\gamma} = w^{\gamma} \lambda^{\gamma}$ for $w\in W$, $\gamma\in \Gamma$, $\lambda\in X^*(T)$. Furthermore, it is straightforward to check that $\Gamma$-action on $W$ permutes the elements of $W^P$. Since $\Gamma$ permutes simple roots, it also permutes simple reflections in $W$, in particular, $l(w)=l(w^\gamma)$ for every $w\in W$ and $\gamma\in \Gamma$. 
\end{definition}

\begin{lemma} \label{lem:weight_defined}
    For $v\in W$ and $\gamma\in \Gamma$ we have $(e_v)^{\gamma} = e_{v^{\gamma}}$, where $e_v\in X^*(T)$ is the Steinberg weight associated to $v$ as in Definition~\ref{def:Steinberg}. In particular, $\operatorname{Stab}_\Gamma v = \operatorname{Stab}_\Gamma e_v$. 
\end{lemma}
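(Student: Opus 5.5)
The plan is to compute $(e_v)^\gamma$ straight from Definition~\ref{def:Steinberg}, using only the compatibility of the right $\Gamma$-action with the combinatorial data recorded above. Recall that $\Gamma$ permutes $\Pi$ and permutes the fundamental weights compatibly: $(\varpi_\alpha)^\gamma=\varpi_{\alpha^\gamma}$. This holds because $\gamma$ preserves the $W$-invariant pairing up to the diagram automorphism, or more simply because $\varpi_\alpha$ is characterized by $\langle \varpi_\alpha,\beta^\vee\rangle=\delta_{\alpha\beta}$ for $\beta\in\Pi$, and $\gamma$ acts compatibly on roots and coroots. We also use $(w\lambda)^\gamma=w^\gamma\lambda^\gamma$, and in particular $(w^{-1})^\gamma=(w^\gamma)^{-1}$, since $\gamma$ acts on $W$ by group automorphisms.

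First I check that the action preserves negativity of roots. Since $\Gamma$ permutes $\Pi$ and acts additively on $X^*(T)$, a root $\beta=\sum_{\alpha\in\Pi} n_\alpha \alpha$ is sent to $\beta^\gamma=\sum n_\alpha \alpha^\gamma$, which is again a root with coefficients of the same sign. Hence $\beta<0$ if and only if $\beta^\gamma<0$.

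Next I compute. For $\alpha\in\Pi$ we have $(v^{-1}\alpha)^\gamma=(v^\gamma)^{-1}\alpha^\gamma$. So $v^{-1}\alpha<0$ if and only if $(v^\gamma)^{-1}\alpha^\gamma<0$. Because $\alpha\mapsto\alpha^\gamma$ is a bijection of $\Pi$, it maps the index set $\{\alpha\in\Pi : v^{-1}\alpha<0\}$ bijectively onto $\{\beta\in\Pi : (v^\gamma)^{-1}\beta<0\}$. Therefore
\[
(e_v)^\gamma=(v^{-1})^\gamma\sum_{\substack{\alpha\in\Pi\\ v^{-1}\alpha<0}}\varpi_\alpha^\gamma=(v^\gamma)^{-1}\sum_{\substack{\beta\in\Pi\\ (v^\gamma)^{-1}\beta<0}}\varpi_\beta=e_{v^\gamma}.
\]

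For the stabilizer claim, the inclusion $\operatorname{Stab}_\Gamma v\subseteq\operatorname{Stab}_\Gamma e_v$ is immediate from the formula. For the reverse inclusion I need $v\mapsto e_v$ to be injective on $W$. This is Steinberg's observation: $e_v$ determines $v$. To see it, write $e_v=v^{-1}\lambda_v$ with $\lambda_v:=\sum_{v^{-1}\alpha<0}\varpi_\alpha$ dominant. Then $\lambda_v$ is the unique dominant weight in $We_v$, so $e_v$ determines $\lambda_v$, and hence the set $S_v:=\{\alpha : v^{-1}\alpha<0\}$ of right descents of $v^{-1}$. Now $v^{-1}$ is determined modulo $W_{\lambda_v}=\langle s_\alpha : \alpha\notin S_v\rangle$, and the condition $v^{-1}\alpha>0$ for all $\alpha\notin S_v$ says that $v^{-1}$ is the minimal-length coset representative, so it is unique. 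Thus $e_{v^\gamma}=e_v$ forces $v^\gamma=v$.

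I expect the main obstacle to be this injectivity step, namely pinning down that the stabilizer condition on a weight, with the descent set as an invariant, recovers $v$ uniquely. The rest is bookkeeping with the $\Gamma$-action.
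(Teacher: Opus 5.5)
Your proposal is correct and follows the paper's proof. Both compute $(e_v)^\gamma$ directly from Definition~\ref{def:Steinberg}, using $(\varpi_\alpha)^\gamma=\varpi_{\alpha^\gamma}$, $(v^{-1})^\gamma=(v^\gamma)^{-1}$ and the fact that the $\Gamma$-action preserves negative roots; the paper merely phrases this with $\gamma^{-1}$ and the substitution $\beta=\alpha^\gamma$. Your proof that $v\mapsto e_v$ is injective is also correct: it uses the uniqueness of the dominant weight in $We_v$ and of the minimal-length representative of $v^{-1}W_{\Pi\setminus S_v}$, and it supplies the step the paper leaves implicit when it says the stabilizer equality ``follows.''
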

\begin{proof}

     Recall that 
     \[
     e_v=v^{-1}\sum_{\substack{\alpha\in \Pi\\ v^{-1}\alpha<0}} \varpi_\alpha.
     \]
     Thus
     \[
     (e_v)^{\gamma}=(v^{-1})^{\gamma}\sum_{\substack{\alpha\in \Pi\\ v^{-1}\alpha<0}} (\varpi_\alpha)^{\gamma} = (v^{\gamma})^{-1}\sum_{\substack{\beta\in \Pi\\ v^{-1}(\beta^{(\gamma^{-1})})<0}} \varpi_\beta,
     \]
     where for the last equality we used $(\varpi_\alpha)^{\gamma} = \varpi_{\alpha^{\gamma}}$ and substituted $\beta:=\alpha^{\gamma}$.     
     Note that $v^{-1}(\beta^{(\gamma^{-1})})=((v^{\gamma})^{-1}\beta)^{\gamma^{-1}}$, and since $\gamma^{-1}$ preserves positive and negative roots, $v^{-1}(\beta^{(\gamma^{-1})})<0$ if and only if $(v^{\gamma})^{-1}\beta<0$. The claim follows.
\end{proof}

\begin{remark}
    It follows from the above Lemma that for $v\in W$ we have $\operatorname{Stab}_{\Gamma} v\leqslant \operatorname{Stab}_\Gamma ve_v$ for the dominant weight $ve_v$, and in general the inclusion can be strict.
\end{remark}

\begin{definition} \label{def:kv}
    For an intermediate field extension $k\subseteq K\subseteq L$ and $w\in W$ we denote $\Gamma_K:=\Gal(L/K)\leqslant \Gamma$ and $K\subseteq K(w)\subseteq L$ the intermediate extension corresponding to $\operatorname{Stab}_{\Gamma_K} w\leqslant \Gamma_K$. 
\end{definition}

\begin{definition}
    Let $k\subseteq K\subseteq L$ be an intermediate field extension, $v\in W^P$, and $w_0\in W$ be the longest element. Then $v$ is defined over $K(v)$ in the sense that there exists a rational point on $(\rfaktor{N_G(T)}{T})_{K(v)}$ that maps to $v\in W=(\rfaktor{N_G(T)}{T})_{L}$ under the extension of scalars. Since $w_0$ is stabilized by $\Gamma$, $w_0v$ is also defined over $K(v)$. Hence for $w=v$ and $w=w_0v$ the Schubert variety $\rfaktor{\overline{B_Lw^{-1}B_L}}{B_L}$ and its boundary $\left(\rfaktor{\overline{B_Lw^{-1}B_L}}{B_L}\right)\setminus \left(\rfaktor{B_Lw^{-1}B_L}{B_L}\right)$ are defined over $K(v)$, i.e. these schemes are obtained by base change from some $Z_{w^{-1}}, \partial Z_{w^{-1}}\subseteq (\rfaktor{G}{B})_{K(v)}$. Furthermore, by Lemma~\ref{lem:weight_defined} the characters $-w_0ve_v\in X^*(T)$ and $ve_v\in X^*(T)$ are defined over $K(v)$ giving rise to $1$-dimensional $K(v)$-representations $K(v)_{-w_0ve_v}$ and $K(v)_{ve_v}$ of $B$ with characters $-w_0ve_v$ and $ve_v$ respectively, which become $L_{-w_0ve_v}$ and $L_{ve_v}$ after the extension of scalars. We define the dual Joseph module $\mathcal{P}_{v,K(v)} \in \lrep{K(v)} B$ and the relative Schubert module $\mathcal{Q}_{v,K(v)} \in \lrep{K(v)} B$ by the same formulas as in the split case,
    \begin{equation*}
    \begin{gathered}
        \mathcal{P}_{v,K(v)}:=\mc{P}_{K(v)}(-e_v):=\mathrm{H}^0\left( Z_{w^{-1}},\mathscr{L}(K(v)_{-we_v})|_{Z_{w^{-1}}}\right),\quad w:=w_0v,\\
        \mathcal{Q}_{v,K(v)}:=\mc{Q}_{K(v)}(e_v):=\ker \left(\mathrm{H}^0( Z_{v^{-1}},\mathscr{L}(K(v)_{ve_v})|_{Z_{v^{-1}}}) \to \mathrm{H}^0(\partial Z_{v^{-1}},\mathscr{L}(K(v)_{ve_v})|_{\partial Z_{v^{-1}}})\right).
    \end{gathered}
    \end{equation*}
    It is clear that for a tower $k\subseteq K\subseteq K'\subseteq L$ we have canonical isomorphisms
    \[
    \mathcal{P}_{v,K(v)}\otimes_{K(v)} K'(v)\cong \mathcal{P}_{v,K'(v)},\quad
    \mathcal{Q}_{v,K(v)} \otimes_{K(v)} K'(v)\cong \mathcal{Q}_{v,K'(v)},
    \]
    and, furthermore,
    \[
    \mathcal{P}_{v,L}\cong \mathcal{P}_v,\quad
    \mathcal{Q}_{v,L}\cong \mathcal{Q}_v,
    \]
    where on the right-hand side we have the corresponding modules for the split group $G_L$ as in Definition~\ref{def:Pv}.
    
    We put
    \[
     \hat{\mathcal{P}}_{v,K(v)}:=\ind_{B}^{P} \mathcal{P}_{v,K(v)},\quad   \hat{\mathcal{Q}}_{v,K(v)}:=\ind_{B}^{P} \mathcal{Q}_{v,K(v)}.
    \]
    Recall that for $v\in W^P$ the counit homomorphism $\res_{B}^{P} \ind_{B}^{P} \mathcal{P}_{v,L} \to \mathcal{P}_{v,L}$ is an isomorphism by \cite[Lemma~14.15]{SvdK24}, and since extension of scalars is faithful, the counit homomorphism
    \[
    \res_{B}^{P} \ind_{B}^{P} \mathcal{P}_{v,K(v)} \to \mathcal{P}_{v,K(v)}
    \]
    is an isomorphism as well, that is $\mathcal{P}_{v,K(v)}\cong \res_{B}^{P} \hat{\mathcal{P}}_{v,K(v)}$ as in the split case. 

\end{definition}

\begin{definition}
    A \textit{transversal} $\Theta\subseteq \mc{S}$ for a right action of a group $\Gamma$ on a set $\mc{S}$ is a collection of representatives of the orbits $\rfaktor{\mc{S}}{\Gamma}$.
\end{definition}

\begin{lemma}\label{lem:invariant}
    Let $R$ be a commutative ring equipped with a right action of a finite group $\Gamma$, let $S\subseteq R$ be a $\Gamma$-invariant subring and $\mc{N}\subseteq R$ be a $\Gamma$-invariant subset such that $R=\bigoplus_{x\in \mc{N}} S\cdot x$, let $\Theta\subseteq \mc{N}$ be a transversal for the $\Gamma$-action and denote $\Gamma_{x}:= \operatorname{Stab}_{\Gamma} x$ for $x\in \Theta$. Then
    \[
    R^\Gamma = \bigoplus_{x\in \Theta}\left\{\left.\sum_{\gamma\in \lfaktor{\Gamma_{x}}{\Gamma}}  s^\gamma x^\gamma\,\right|\,  {s\in S^{\Gamma_{x}}} \right\}.
    \]
    Here $R^\Gamma$ and $S^{\Gamma_{x}}$ denote the respective subrings of invariant elements.
\end{lemma}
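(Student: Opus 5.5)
The plan is a direct computation with coordinates relative to the decomposition $R=\bigoplus_{x\in\mc{N}} S\cdot x$. The first step is to make explicit how $\Gamma$ acts on coordinates. Every $r\in R$ can be written uniquely as $r=\sum_{x\in\mc{N}} s_x x$ with $s_x\in S$. The action of $\gamma$ is a ring automorphism preserving both $S$ and $\mc{N}$, so
\[
r^\gamma=\sum_{x\in\mc{N}} s_x^\gamma x^\gamma .
\]
Since $\gamma$ permutes $\mc{N}$, this is again a decomposition of the same shape. Uniqueness of coefficients then shows that the coefficient of $r^\gamma$ at $x^\gamma$ is $s_x^\gamma$.

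The second step characterizes the invariants. By the first step, $r\in R^\Gamma$ if and only if $s_{x^\gamma}=s_x^\gamma$ for all $x\in\mc{N}$ and $\gamma\in\Gamma$. Group the terms of $r$ according to the orbits $x\Gamma$, with $x\in\Theta$. The sum over each orbit is itself $\Gamma$-invariant, because the invariance condition only relates coefficients within a single orbit. Hence $R^\Gamma$ splits as the direct sum over $x\in\Theta$ of the invariants inside $\bigoplus_{y\in x\Gamma} S\cdot y$. This reduces the claim to a single orbit.

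The third step treats one orbit. Fix $x\in\Theta$. An invariant element supported on $x\Gamma$ is determined by its coefficient $s:=s_x$. Taking $\gamma\in\Gamma_x$ in the relation $s_{x^\gamma}=s_x^\gamma$ forces $s^\gamma=s$, so $s\in S^{\Gamma_x}$. For a general $\gamma$, the coefficient at $x^\gamma$ is $s^\gamma$. This is well defined on the coset $\Gamma_x\gamma$: if $x^\gamma=x^{\gamma'}$ then $\gamma'\gamma^{-1}\in\Gamma_x$ (with the right-action composition convention), and invariance of $s$ under $\Gamma_x$ gives $s^\gamma=s^{\gamma'}$. The map $\gamma\mapsto x^\gamma$ identifies $\lfaktor{\Gamma_x}{\Gamma}$ bijectively with the orbit $x\Gamma$. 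Therefore the invariant element equals $\sum_{\gamma\in\lfaktor{\Gamma_x}{\Gamma}} s^\gamma x^\gamma$.

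Conversely, for any $s\in S^{\Gamma_x}$ this sum is well defined, independent of the choice of coset representatives, and $\Gamma$-invariant. Indeed, applying $\delta\in\Gamma$ sends it to $\sum s^{\gamma\delta}x^{\gamma\delta}$, and right multiplication by $\delta$ permutes the cosets $\lfaktor{\Gamma_x}{\Gamma}$. The sum over $x\in\Theta$ is direct because distinct orbits occupy disjoint parts of the basis $\mc{N}$.

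There is no real obstacle in this argument. The only point requiring care is the bookkeeping of the right-action conventions: one must check that $\Gamma_x\backslash\Gamma$, rather than $\Gamma/\Gamma_x$, is the set parametrizing the orbit, which it is for a right action.
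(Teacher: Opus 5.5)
Your proposal is correct and is essentially the paper's argument. The paper also writes an invariant element in coordinates with respect to the $\Gamma$-invariant basis $\mathcal{N}$, notes that invariance forces $s_x^\gamma=s_{x^\gamma}$ for all $x\in\mathcal{N}$ and $\gamma\in\Gamma$, and declares that the claim follows. Your orbit-by-orbit analysis, including the check that $\lfaktor{\Gamma_x}{\Gamma}$ parametrizes the orbit $x\Gamma$ for a right action, simply makes explicit what the paper leaves implicit.
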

\begin{proof}
    Let $x\in R^\Gamma$. Then $x=\sum_{x\in \mc{N}} s_x x$ for some $s_x\in S$. Since $\mc{N}$ is a $\Gamma$-invariant basis, $s_{x}^\gamma = s_{x^{\gamma}}$ for every  $\gamma\in\Gamma$ and $x\in\mc{N}$. The claim follows.
\end{proof}


\begin{theorem}\label{thm:generation}
    Let $\Theta\subseteq W^P$ be a transversal for the $\Gamma$-action and let $N_{v}\in \D^b(\lrep{k(v)} P)$, $v\in \Theta$, be a collection of objects. Suppose that
    \[
    \D^b(\lrep{L} P)= \hull\left( \left\{M\otimes_L (N_{v}\otimes_{k(v)} L)^\gamma \right\}_{M\in \lrep{L} G,\, v\in \Theta,\, \gamma\in\Gamma}\right).
    \]
    Then
    \begin{equation} \label{eq:gen_P}
        \D^b(\rep P)= \hull\left( \left\{(r_{k(v)})_*(M\otimes_{k(v)} N_v) \right\}_{M\in \lrep{k(v)} G,\,v\in \Theta}\right).
    \end{equation}
\end{theorem}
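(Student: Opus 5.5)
The plan is a Galois descent argument. Denote by $\mc{C}\subseteq \D^b(\rep P)$ the right-hand side of~\refbr{eq:gen_P}, which is a triangulated subcategory closed under $\rep G\otimes -$. The idea is to show that $\mc{C}$ is large enough that every object of $\D^b(\rep P)$ is a direct summand of an object of $\mc{C}$, and then to remove the summands.

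\textbf{Step 1 (extension and restriction of scalars).} For $v\in\Theta$, $M\in \lrep{L}G$ and $\gamma\in\Gamma$, I claim that
\[
(r_{L})_*\bigl(M\otimes_L (N_v\otimes_{k(v)}L)^\gamma\bigr)\in\mc{C}.
\]
First, $(-)^\gamma$ commutes with $(r_L)_*$, since $r_L\circ\Spec\gamma=r_L$. So we may replace $M$ by $M^{\gamma^{-1}}$ and assume $\gamma=1$. Next factor $(r_L)_*=(r_{k(v)})_*\circ(r_{L/k(v)})_*$ and apply the projection formula over $k(v)$:
\[
(r_{L/k(v)})_*\bigl(M\otimes_L(N_v\otimes_{k(v)}L)\bigr)\cong (r_{L/k(v)})_*M\otimes_{k(v)}N_v .
\]
Here $(r_{L/k(v)})_*M\in \lrep{k(v)}G$, so the object lies in $\mc{C}$. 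Since $(r_L)_*$ is exact, the hypothesis then gives $(r_L)_*(\D^b(\lrep{L}P))\subseteq \mc{C}$.

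\textbf{Step 2 (summands).} For $X\in\D^b(\rep P)$, Lemma~\ref{lem:pushpull}(2) gives
\[
(r_L)_*(X\otimes L)\cong X^{\oplus[L:k]}\in\mc{C}.
\]
Hence every object of $\D^b(\rep P)$ is a direct summand of an object of $\mc{C}$, i.e.\ the idempotent completion of $\mc{C}$ is everything.

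\textbf{Step 3 (remove the summands).} This is the main obstacle, since a triangulated hull need not be closed under direct summands. I would first try to check directly that $\mc{C}$ is thick, using the bounded t-structure on $\D^b(\rep P)$: generation then reduces to objects of $\rep P$. If that fails, I would argue with $K_0$ and Thomason's classification of dense subcategories: a dense triangulated subcategory equals the whole category if and only if it contains the whole of $K_0$.

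For the $K_0$ argument, the idea is to show that $K_0(\rep P)$ is generated by classes of objects of $\mc{C}$. Since $\rep P$ has finite length, $K_0(\rep P)$ is spanned by the classes of irreducibles. Every irreducible $P$-representation is inflated from the Levi quotient and is isomorphic to $(r_K)_*$ of an absolutely irreducible $K$-form, or to a summand of such. Over $L$, restriction of scalars is then controlled by Lemma~\ref{lem:pushpull}(1). The remaining question is whether $[L:k]$-divisibility is an issue; I would expect to handle it by applying Steps 1–2 with $L$ replaced by the field of definition $K$ of each irreducible summand, so that the multiplicity is $1$.

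Alternatively, Step 3 can probably be avoided by showing that $\mc{C}$ contains the generators $M\otimes N$ directly. By the proof of Theorem~\ref{thm:SOD_PQ_with_ant}, it suffices to have the weight-filtered pieces, and each weight-$\lambda$ line module over $k$ is, up to scalar extension, $(r_{k(\lambda)})_*$ of a line module. So the decisive point, which I have not checked, is whether every irreducible $P$-module is literally of the form $(r_K)_*$ of something obtained from the $L$-level generators.
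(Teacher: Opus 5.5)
Your Steps 1 and 2 are correct and match the first half of the paper's proof: they show that $\mc{C}$ is dense in $\D^b(\rep P)$, and Thomason's classification is the right tool to finish. However, there is a genuine gap. Step 3 is where the content of the theorem lies, and none of your suggestions closes it.

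\begin{itemize}
\item The objects produced by Steps 1--2 are the $(r_L)_*Y$ with $Y\in\D^b(\lrep{L} P)$. Their classes lie in $(r_L)_*K_0(\lrep{L} P)$, and every such class has $k$-dimension divisible by $[L:k]$. So for $L\neq k$ this subgroup does not even contain the class of the trivial representation, and density alone says nothing about $K_0$.
\item Rerunning Steps 1--2 with $L$ replaced by a field of definition $K$ of an irreducible is circular. Your only generation hypothesis is over $L$; the $N_v$ need not be defined over $K$, and $K/k$ need not be Galois. Establishing generation over $K$ is the same problem as the theorem.
\item The t-structure route gives no mechanism for showing that a triangulated hull is thick.
\item The route through Theorem~\ref{thm:SOD_PQ_with_ant} uses the specific modules $\hat{\mc{P}}_v,\hat{\mc{Q}}_v$, whereas the statement concerns arbitrary $N_v$ satisfying the hypothesis.
\end{itemize}

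What is missing is a representation-ring computation showing that the classes of the actual generators $(r_{k(v)})_*(M\otimes_{k(v)}N_v)$, $M\in\lrep{k(v)} G$, span $R(P)=K_0(\rep P)$. The paper does this as follows.
\begin{enumerate}
\item Put $[N_{w,L}]:=[(N_v\otimes_{k(v)}L)^\gamma]$ for $w=v^\gamma\in W^P$. The hypothesis says these $\#W^P$ classes generate $R_L(P)$ as an $R_L(G)$-module.
\item By Steinberg's theorem \cite{St75}, $R_L(P)$ is free over $R_L(G)$ of rank $\#W^P$. Hence these classes form a basis, permuted by $\Gamma$ through its action on $W^P$.
\item Uniqueness of coefficients forces the coefficients of a $\Gamma$-invariant element to be $\Gamma$-equivariant (Lemma~\ref{lem:invariant}). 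So $R_L(P)^\Gamma$ is the direct sum over $v\in\Theta$ of orbit sums $\sum_{\gamma\in\Gamma_v\backslash\Gamma}c^\gamma[N_{v^\gamma,L}]$ with $c\in R_L(G)^{\Gamma_v}$.
\item Tits' results for quasi-split groups give $R(P)\cong R_L(P)^\Gamma$ and $R_{k(v)}(G)\cong R_L(G)^{\Gamma_v}$ via extension of scalars. Thus $c=[M\otimes_{k(v)}L]$ for a virtual $k(v)$-representation $M$.
\item By Lemma~\ref{lem:pushpull}, that orbit sum is exactly the scalar extension of $[(r_{k(v)})_*(M\otimes_{k(v)}N_v)]$.
\end{enumerate}
Hence $K_0(\mc{C})\to K_0(\D^b(\rep P))$ is surjective and Thomason's theorem applies.

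The freeness input is essential. With a merely generating $\Gamma$-stable family you cannot control the invariants, because averaging over $\Gamma$ is unavailable integrally. So this step cannot be replaced by the irreducible-by-irreducible reasoning you sketch.
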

\begin{proof}
    Let $\D$ be the right-hand side of~\refbr{eq:gen_P}. We would like to appeal to Thomason's classification of dense triangulated subcategories \cite[Theorem~2.1]{Th97}, for which we need to show that $\D$ is dense in $\D^b(\rep P)$ and that $K_0(\D)= K_0(\D^b(\rep P))$.
    
    For $N\in \D^b(\rep P)$ we have $(r_L)_*(N\otimes_k L)\cong N^{\oplus [L:k]}$, thus
    \[
    \left((r_L)_*(\D^b(\lrep{L} P))\right)^{\oplus} =\D^b(\rep P),
    \]
    where the superscript $\oplus$ denotes the thick closure. Furthermore, for $M\in \lrep{L} G$, $v\in\Theta$ and $\gamma\in \Gamma$ we have
    \begin{multline*}
    (r_L)_*(M\otimes_L (N_v\otimes_{k(v)} L)^\gamma)\cong (r_L)_*((M^{\gamma^{-1}}\otimes_L (N_v\otimes_{k(v)} L))^\gamma)\cong \\ 
    \cong (r_L)_*(M^{\gamma^{-1}}\otimes_L (N_v\otimes_{k(v)} L))\cong (r_{k(v)})_*\left((r_{L/k(v)})_*(M^{\gamma^{-1}})\otimes_{k(v)} N_v\right).
    \end{multline*}
    Combining the above we obtain $\D^b(\rep P)= \D^\oplus$, i.e. $\D$ is dense in $\D^b(\rep P)$.

    Consider the following representation rings. 
    \[
    R(P):=K_0(\rep P),\quad R_L(P):=K_0(\lrep{L} P),\quad R_L(G):=K_0(\lrep{L} G),\quad R_{k(v)}(G):=K_0(\lrep{k(v)} G).
    \]
    For $w\in W^P$ let $[N_{w,L}]:=[(N_v\otimes_{k(v)} L)^\gamma] \in R_L(P)$ for $v\in \Theta$ and $\gamma\in \Gamma$ such that $w=v^{{\gamma}}$, note that this isomorphism class does not depend on the particular choice of $\gamma$. By the assumption the classes $\{[N_{w,L}]\}_{w\in W^P}$ generate $R_L(P)$ as an $R_L(G)$-module, and since $R_L(P)$ is a free $R_L(G)$-module of rank $\# W^P$ by a theorem of Steinberg \cite{St75}, the classes $\{[N_{w,L}]\}_{w\in W^P}$ are a basis of $R_L(P)$ over $R_L(G)$. Furthermore, the basis is $\Gamma$-invariant by the construction. By Tits' classification of representations of quasi-split groups (see also \cite[Fact on p.~576]{Pan94}) extension of scalars induces isomorphisms $R(P)\cong R_L(P)^{\Gamma}$ and $R_{k(v)}(G)\cong R_L(G)^{\Gamma_v}$ with $\Gamma_v=\operatorname{Stab}_{\Gamma} v$. Then Lemma~\ref{lem:invariant} yields
    \[
    R(P)\cong R_L(P)^{\Gamma} = \bigoplus_{v\in \Theta}\left\{\sum_{\gamma\in \lfaktor{\Gamma_{v}}{\Gamma}}  [M\otimes_{k(v)} L]^\gamma \cdot [N_{v^\gamma,L}]\right\}_{[M]\in R_{k(v)}(G)}.
    \]
    For $v\in \Theta$ and $M\in \lrep{k(v)} G$ we have
    \[
    \left((r_{k(v)})_*(M\otimes_{k(v)} N_{v})\right) \otimes_k L \cong \bigoplus_{\gamma \in \lfaktor{\Gamma_{v}}{\Gamma}} (M\otimes_{k(v)} L)^\gamma \otimes_L N_{v^\gamma,L}.
    \]
    Thus $K_0(\D^b(\rep P))=K_0(\D)$ and the claim of the Theorem follows from \cite[Theorem~2.1]{Th97}.
    \end{proof}

\begin{definition} \label{def:PQ_qs}
    Let $k\subseteq K\subseteq L$ be an intermediate field extension, $\Gamma_K:=\Gal(L/K)\leqslant \Gamma$ and $I\subseteq W^P$ be a $\Gamma_K$-invariant initial segment in the Bruhat order. Denote
    \[
        \hat{\mc{P}}_{I,K} :=\hull \left( \left\{(r_{K(v)/K})_*\left(M\otimes_{K(v)} \hat{\mc{P}}^*_{v,K(v)}\right) \right\}_{M\in \lrep{K(v)} G,\,v\in I}\right)\subseteq \D^b(\lrep{K} P).
    \]
    Similarly, put $\bar{I}:=W^P\setminus I$ and denote
    \[
        \hat{\mc{Q}}_{\bar{I},K} :=\hull \left( \left\{(r_{K(v)/K})_*\left(M\otimes_{K(v)} \hat{\mc{Q}}_{v,K(v)}\right) \right\}_{M\in \lrep{K(v)} G,\,v\in \bar{ I}}\right)\subseteq \D^b(\lrep{K} P).
    \]
\end{definition}

\begin{lemma}\label{lem:PQ_ext_scalars}
    In the notation of Definitions~\ref{def:Xv} and~\ref{def:PQ_qs} one has
    \[
    \hat{\mc{P}}_{\bar{I},K}\otimes_K L \subseteq \hat{\mc{P}}_{\bar{I}},\quad \hat{\mc{Q}}_{\bar{I},K}\otimes_K L \subseteq \hat{\mc{Q}}_{\bar{I}}.
    \]
\end{lemma}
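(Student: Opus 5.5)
The plan is to reduce both inclusions to a statement about generators. Extension of scalars $-\otimes_K L\colon \D^b(\lrep{K} P)\to \D^b(\lrep{L} P)$ is an exact functor, so it takes triangulated hulls into triangulated hulls. The targets $\hat{\mc{P}}_{\bar I}$ and $\hat{\mc{Q}}_{\bar I}$ are triangulated subcategories of $\D^b(\lrep{L} P)$. It is therefore enough to check that the image of each generator of $\hat{\mc{P}}_{\bar I,K}$ (resp. $\hat{\mc{Q}}_{\bar I,K}$) lies in $\hat{\mc{P}}_{\bar I}$ (resp. $\hat{\mc{Q}}_{\bar I}$). Here the subscript is read for whichever $\Gamma_K$-invariant (co)initial segment is intended; since $\bar I$ is $\Gamma_K$-invariant whenever $I$ is, the argument is the same. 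Both families of subcategories are closed under direct summands of finite sums and under $N\otimes -$ for $N\in\lrep{L}G$, and this is the only structure I will use.

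Fix a generator $(r_{K(v)/K})_*\bigl(M\otimes_{K(v)}\hat{\mc{P}}^*_{v,K(v)}\bigr)$ with $v$ in the segment and $M\in\lrep{K(v)}G$. I apply Lemma~\ref{lem:pushpull}(1) to the tower $K\subseteq K(v)\subseteq L$, after replacing $k$ by $K$ and $\Gamma$ by $\Gamma_K$. This gives
\[
(r_{K(v)/K})_*\bigl(M\otimes_{K(v)}\hat{\mc{P}}^*_{v,K(v)}\bigr)\otimes_K L\cong\bigoplus_{\gamma\in \lfaktor{\Gamma_{K(v)}}{\Gamma_K}}\bigl((M\otimes_{K(v)}L)\otimes_L(\hat{\mc{P}}^*_{v,K(v)}\otimes_{K(v)}L)\bigr)^\gamma .
\]
Extension of scalars commutes with $\ind_B^P$, by flat base change (Proposition~\ref{prop:functor_formalism}(2)), and with duals. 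Combined with $\mc{P}_{v,K(v)}\otimes_{K(v)}L\cong\mc{P}_v$, this gives $\hat{\mc{P}}^*_{v,K(v)}\otimes_{K(v)}L\cong\hat{\mc{P}}_v^*$. The functor $(-)^\gamma$ is symmetric monoidal and commutes with $\ind$ and duals, so each summand becomes $(M\otimes_{K(v)} L)^\gamma\otimes_L(\hat{\mc{P}}_v^*)^\gamma$, and $(M\otimes_{K(v)} L)^\gamma$ lies in $\lrep{L}G$.

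The step I expect to carry the real content is the identification $(\hat{\mc{P}}_v^*)^\gamma\cong\hat{\mc{P}}_{v^\gamma}^*$, and likewise $(\hat{\mc{Q}}_v)^\gamma\cong\hat{\mc{Q}}_{v^\gamma}$. To prove it, I apply the Galois twist to the defining formula of $\mc{P}(-e_v)$ as global sections on the Schubert variety $Z_{(w_0v)^{-1}}$. Because $B$, $P$ and $T$ are defined over $k$, the twist sends $Z_{w^{-1}}$ to $Z_{(w^\gamma)^{-1}}$, its boundary to the corresponding boundary, and $L_\lambda$ to $L_{\lambda^\gamma}$. Since $w_0^\gamma=w_0$, Lemma~\ref{lem:weight_defined} gives $(-w_0ve_v)^\gamma=-w_0v^\gamma e_{v^\gamma}$. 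Taking global sections commutes with base change along $\Spec\gamma$, and this yields the claim; the $\mc{Q}$ case is identical. Finally, $v^\gamma$ lies in the segment because it is $\Gamma_K$-invariant and $\gamma\in\Gamma_K$. Each summand is therefore a generator of $\hat{\mc{P}}_{\bar I}$, and so is the direct sum. The relative Schubert case follows verbatim, with $\hat{\mc{Q}}_{v,K(v)}$ in place of $\hat{\mc{P}}^*_{v,K(v)}$.
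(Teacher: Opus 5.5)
Your proposal is correct and takes the same route as the paper. You extend scalars via Lemma~\ref{lem:pushpull}(1), identify $\hat{\mc{P}}^*_{v,K(v)}\otimes_{K(v)}L\cong\hat{\mc{P}}^*_v$, and use $(\hat{\mc{P}}_v)^\gamma\cong\hat{\mc{P}}_{v^\gamma}$ (and likewise for $\hat{\mc{Q}}$), so that every summand is a generator of the target hull; indexing the summands by $\lfaktor{\Gamma_{K(v)}}{\Gamma_K}$ rather than the paper's $\lfaktor{\Gamma_{K(v)}}{\Gamma}$ is the accurate choice, since it is what keeps $v^\gamma$ inside the $\Gamma_K$-invariant segment, and you also justify the Galois-twist identification that the paper only asserts.
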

\begin{proof}
    Let $\Gamma_{K(v)}:=\Gal(L/K(v))$, then 
    \[
    (r_{K(v)/K})_*\left(M\otimes_{K(v)} \hat{P}^*_{v,K(v)}\right) \otimes_K L
    \cong \bigoplus_{\gamma \in \lfaktor{\Gamma_{K(v)}}{\Gamma}} (M\otimes_{K(v)} L)^\gamma \otimes_{L} (\hat{\mc{P}}^*_{v,K(v)} \otimes_{K(v)} L)^{\gamma}.
    \]
    Since $\hat{\mc{P}}^*_{v,K(v)} \otimes_{K(v)} L \cong \hat{\mc{P}}^*_{v}$ and $(\hat{\mc{P}}_{v})^{\gamma} \cong \hat{\mc{P}}_{v^\gamma}$ the first claimed inclusion follows. The second one is obtained in a similar way.
\end{proof}

\begin{proposition} \label{prop:PQ_qs}
    In the notation of Definition~\ref{def:PQ_qs} there is a semiorthogonal decomposition
    \begin{equation} \label{eq:sod_qs}
        \D^b(\lrep{K} P) = \left\langle \hat{\mc{Q}}_{\bar{I},K}, \hat{\mc{P}}_{I,K} \right\rangle.
    \end{equation}
\end{proposition}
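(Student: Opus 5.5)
We prove the two defining properties of a semiorthogonal decomposition separately: semiorthogonality by descending to the split case over $L$, and generation from Theorem~\ref{thm:generation}.

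\textbf{Semiorthogonality.} Take $X\in\hat{\mc{P}}_{I,K}$ and $Y\in\hat{\mc{Q}}_{\bar{I},K}$. We must show $\Hom_{\D^b(\lrep{K}P)}(X,Y)=0$.
\begin{itemize}
\item Extension of scalars along $L/K$ gives
\[
\Hom_{\D^b(\lrep{K}P)}(X,Y)\otimes_K L\cong \Hom_{\D^b(\lrep{L}P)}(X\otimes_K L,Y\otimes_K L),
\]
by flat base change for the morphism $\Spec L\to\Spec K$. This is the derived version of $\Hom$ commuting with the field extension for finite dimensional representations.
\item By Lemma~\ref{lem:PQ_ext_scalars}, applied with $I$ (its proof works verbatim for any $\Gamma_K$-invariant segment), we have $X\otimes_K L\in\hat{\mc{P}}_I$ and $Y\otimes_K L\in\hat{\mc{Q}}_{\bar I}$.
\item These subcategories are semiorthogonal by the split decomposition~\refbr{eq:SOD_PQ} for $G_L$, which is simply connected and split.
\end{itemize}
Hence the right-hand side vanishes. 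Since $L/K$ is faithfully flat, the left-hand Hom vanishes too.

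\textbf{Generation, setup.} We apply Theorem~\ref{thm:generation} with $k$ replaced by $K$ and $\Gamma$ replaced by $\Gamma_K$. Choose a $\Gamma_K$-transversal $\Theta\subseteq W^P$, and write $\Theta=\Theta_I\sqcup\Theta_{\bar I}$; this is possible because $I$ is $\Gamma_K$-invariant. Put
\[
N_v:=\hat{\mc{P}}^*_{v,K(v)}\ \text{for } v\in\Theta_I,\qquad N_v:=\hat{\mc{Q}}_{v,K(v)}\ \text{for } v\in\Theta_{\bar I}.
\]

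\textbf{Generation, hypothesis over $L$.} The theorem requires
\[
\D^b(\lrep{L}P)=\hull\left(\left\{M\otimes_L (N_v\otimes_{K(v)}L)^\gamma\right\}_{M\in\lrep{L}G,\,v\in\Theta,\,\gamma\in\Gamma_K}\right).
\]
This follows from the split decomposition~\refbr{eq:SOD_PQ}, whose generation part gives $\D^b(\lrep{L}P)=\hull(\hat{\mc{Q}}_{\bar I},\hat{\mc{P}}_I)$. It remains to identify the generators:
\begin{itemize}
\item $N_v\otimes_{K(v)}L\cong\hat{\mc{P}}^*_v$ (respectively $\hat{\mc{Q}}_v$), by the base-change isomorphisms in Definition~\ref{def:kv} together with flat base change for $\ind^P_B$;
\item $(\hat{\mc{P}}_v)^\gamma\cong\hat{\mc{P}}_{v^\gamma}$ and $(\hat{\mc{Q}}_v)^\gamma\cong\hat{\mc{Q}}_{v^\gamma}$.
\end{itemize}
Since $\Theta$ is a transversal and $I$ is $\Gamma_K$-invariant, the objects $(N_v\otimes_{K(v)}L)^\gamma$ run over all $\hat{\mc{P}}^*_w$ with $w\in I$ and all $\hat{\mc{Q}}_w$ with $w\in\bar I$.

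\textbf{Generation, conclusion.} Theorem~\ref{thm:generation} then gives
\[
\D^b(\lrep{K}P)=\hull\left(\left\{(r_{K(v)/K})_*(M\otimes_{K(v)}N_v)\right\}\right).
\]
Every such generator lies in $\hat{\mc{P}}_{I,K}$ or in $\hat{\mc{Q}}_{\bar I,K}$, so $\D^b(\lrep{K}P)=\hull(\hat{\mc{Q}}_{\bar I,K},\hat{\mc{P}}_{I,K})$. Together with semiorthogonality this gives~\refbr{eq:sod_qs}.

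\textbf{Main obstacle.} The delicate step is the identification $(\hat{\mc{P}}_v)^\gamma\cong\hat{\mc{P}}_{v^\gamma}$, and likewise for $\hat{\mc{Q}}$. It says that Galois twisting transports Schubert varieties, line bundles and weights compatibly. It follows from:
\begin{itemize}
\item Lemma~\ref{lem:weight_defined}, which gives $(e_v)^\gamma=e_{v^\gamma}$;
\item the facts that $(Z_w)^\gamma=Z_{w^\gamma}$, that the twist of $\mathscr{L}(L_\lambda)$ is $\mathscr{L}(L_{\lambda^\gamma})$, and that $w_0$ is fixed by $\Gamma$.
\end{itemize}
Applying $(-)^\gamma$, i.e.\ pushforward along the field automorphism, to $\mathrm{H}^0$ then gives the claimed isomorphism.

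A second point to check is that Theorem~\ref{thm:generation} holds over the base $K$. Its proof uses only Steinberg's freeness result and Tits' description $R_K(P)\cong R_L(P)^{\Gamma_K}$, both of which hold verbatim for $G_K$.
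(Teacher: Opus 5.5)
Your proposal is correct and follows the paper's proof. Semiorthogonality is checked after extending scalars to $L$, using Lemma~\ref{lem:PQ_ext_scalars} and the split decomposition~\refbr{eq:SOD_PQ}; generation comes from Theorem~\ref{thm:generation}, applied over $K$ with $\Gamma_K$ and fed by the generation part of~\refbr{eq:SOD_PQ}. You also spell out details the paper leaves implicit, such as the choice of $N_v$ on each side of the $\Gamma_K$-invariant segment $I$ and reading the lemma's inclusion as $\hat{\mc{P}}_{I,K}\otimes_K L\subseteq\hat{\mc{P}}_I$.
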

\begin{proof}
    Semiorthogonality follows from the semiorthogonality property of the decomposition~\refbr{eq:SOD_PQ} and Lemma~\ref{lem:PQ_ext_scalars} since extension of scalars to $L$ is a faithful functor. Generation follows from Theorem~\ref{thm:generation} and the generation part of the semiorthogonal decomposition~\refbr{eq:SOD_PQ}.
\end{proof}

\begin{definition} \label{def:compaible_action}
    We say that a total order $\le$ on $W$ is \textit{compatible with $\Gamma$-action} if for every $w,v\in W$ and $\gamma\in \Gamma$ such that $w\le v\le w^\gamma$ there exists $\gamma'\in \Gamma$ such that $v=w^{\gamma'}$. Note that one may always extend the Bruhat order $\lebru$ on $W$ to a total order in a compatible with $\Gamma$-action way, for example, one may first order the elements with respect to the length function $l(-)$, then order the $\Gamma$-orbits consisting of the elements of the same length, and then arbitrary order the elements inside every $\Gamma$-orbit. 
\end{definition}

\begin{lemma} \label{lem:X_Gamma_action}
    Let $\le$ be a total order on $W$ extending the Bruhat order $\lebru$ and compatible with $\Gamma$-action. Then for $v\in W^P$, $\gamma\in\Gamma$ and $\hat{X}_v, \hat{X}_{v^\gamma}\in \D^b(\lrep{L} P)$ defined as in Definition~\ref{def:Xv} one has
    \[
    (\hat{X}_v)^\gamma\cong \hat{X}_{v^\gamma}.
    \]
\end{lemma}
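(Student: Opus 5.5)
The idea is that the twist $(-)^\gamma=(\Spec\gamma)_*$ is a symmetric monoidal autoequivalence of $\D^b(\lrep{L} P)$ that commutes with restriction and induction. It therefore carries the whole construction of Definition~\ref{def:Xv} for $v$ to the analogous construction for $v^\gamma$, provided the relevant initial segments transform correctly.

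First I would record how the building blocks transform. For $w\in W^P$ one has $(\hat{\mc{P}}_w)^\gamma\cong\hat{\mc{P}}_{w^\gamma}$ and $(\hat{\mc{Q}}_w)^\gamma\cong \hat{\mc{Q}}_{w^\gamma}$. This was already used in the proof of Lemma~\ref{lem:PQ_ext_scalars}. It follows from the Galois-equivariance of the Schubert varieties and their boundaries: $\gamma$ sends $Z_{w^{-1}}$ to $Z_{(w^\gamma)^{-1}}$ because $B$ is defined over $k$. It also uses Lemma~\ref{lem:weight_defined}, giving $(e_w)^\gamma=e_{w^\gamma}$, together with $(w\lambda)^\gamma=w^\gamma\lambda^\gamma$ and the fact that $\gamma$ commutes with $\ind_B^P$. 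Since $(-)^\gamma$ permutes $\lrep{L} G$, it follows that for any initial segment $I\subseteq W^P$ the twist maps the subcategory $\hat{\mc{P}}_I$ onto $\hat{\mc{P}}_{I^\gamma}$ and $\hat{\mc{Q}}_{\bar I}$ onto $\hat{\mc{Q}}_{\bar I^\gamma}$. Here I use that $I^\gamma$ is again an initial segment, because $\gamma$ preserves the Bruhat order (it permutes the simple reflections). An autoequivalence that carries the semiorthogonal decomposition~\refbr{eq:SOD_PQ} for $I$ to the one for $I^\gamma$ intertwines the projection functors. Hence
\[
\bigl((j_{\bar I})_*j_{\bar I}^*\hat{\mc{P}}_v^*\bigr)^\gamma\cong (j_{\bar I^\gamma})_*j_{\bar I^\gamma}^*\hat{\mc{P}}_{v^\gamma}^*.
\]

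Applying this with $I=I_{<v}$ gives $(\hat X_v)^\gamma\cong (j_{\bar I'})_*j_{\bar I'}^*\hat{\mc{P}}^*_{v^\gamma}$, where $I'=(I_{<v})^\gamma$. Since $\le$ need not be $\Gamma$-invariant, $I'$ may differ from $I_{<v^\gamma}$. This is the main obstacle, and here compatibility with the $\Gamma$-action enters. Let $w\in W^P$ with $w\ne v^\gamma$.

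If $w$ is not in the orbit $v^\Gamma$, then compatibility implies that the interval between $v$ and $v^{\gamma'}$ contains only orbit elements. So the whole orbit lies in a contiguous block of the order $\le$. Consequently $w<v$ if and only if $w<u$ for every $u$ in the orbit. Applying this to the orbit element $w^{\gamma^{-1}}$, which is also outside the orbit, together with $v$ and $v^\gamma$, gives $w^{\gamma^{-1}}<v \iff w<v^\gamma$. So $I'$ and $I_{<v^\gamma}$ agree outside the orbit.

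Inside the orbit the two sets may genuinely differ, and I would handle this with Proposition~\ref{prop:projections_independent}. It suffices to show $I'\cap J_{v^\gamma}=I_{<v^\gamma}\cap J_{v^\gamma}$. Every orbit element $u$ has $\|e_u\|=\|e_{v^\gamma}\|$ and $l(u)=l(v^\gamma)$. Such a $u$ lies in $J_{v^\gamma}$ only if $e_u\leant e_{v^\gamma}$. For $u\neq v^\gamma$ with $ue_u=v^\gamma e_{v^\gamma}$, this would mean $v^\gamma e_{v^\gamma}$-related elements are comparable in Bruhat order. But distinct elements of equal length are never Bruhat-comparable, so no orbit element other than $v^\gamma$ lies in $J_{v^\gamma}$. (If I have the convention of $\leant$ on non-dominant weights reversed, the same length argument still applies.) The element $v^\gamma$ itself lies in neither $I'$ nor $I_{<v^\gamma}$.

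Thus the intersections with $J_{v^\gamma}$ coincide, and Proposition~\ref{prop:projections_independent} gives $(\hat X_v)^\gamma\cong\hat X_{v^\gamma}$.

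Should the $J$-argument fail, my fallback is to use the description $\hat X\cong\hat Y$ through $I_{\le v}$ instead. Alternatively, I would argue directly with Corollary~\ref{cor:orthogonal}: distinct orbit elements give mutually orthogonal pieces, so their position inside the orbit block does not affect the projection.
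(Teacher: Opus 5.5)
Your proof is correct and follows essentially the paper's route. Transporting along $\gamma$ produces the object for a twisted order: your $(I_{<v})^\gamma$ is the corresponding segment for the paper's order $\le'$, where $w_1\le' w_2$ iff $w_1^{\gamma^{-1}}\le w_2^{\gamma^{-1}}$. The discrepancy is then confined to the $\Gamma$-orbit of $v$, no element of which other than $v^\gamma$ lies in $J_{v^\gamma}$ because lengths and norms agree along the orbit. The paper invokes Corollary~\ref{cor:Xv_independent} (the $I_{\le v}$, $\hat{\mc{Q}}_v$ description) where you apply Proposition~\ref{prop:projections_independent} directly to the $\hat{\mc{P}}^*_v$ description, which is equally valid. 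Your explicit check that the segments agree off the orbit is something the paper leaves implicit. Note that the step $w^{\gamma^{-1}}<v^\gamma\iff w<v^\gamma$ there also needs the orbit of $w$ to be contiguous, i.e.\ compatibility applied to $w^{\gamma^{-1}}\le v^\gamma\le w$ or $w\le v^\gamma\le w^{\gamma^{-1}}$; this follows by the same argument you gave for the orbit of $v$.
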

\begin{proof}
    Let $\le'$ be the total order on $W$ with $w_1 \le' w_2$ if and only if $w_1^{\gamma^{-1}} \le w_2^{\gamma^{-1}}$, and let $\hat{X}'_{v^\gamma}$ be the object corresponding to $v^\gamma$ and total order $\le'$. We have $(\hat{\mc{P}}_v)^\gamma \cong \hat{\mc{P}}_{v^\gamma}$ and  for $w\in W$ we have $(\hat{\mc{Q}}_w)^\gamma \cong \hat{\mc{Q}}_{w^\gamma}$, thus $(\hat{\mc{Q}}_{\ge v})^\gamma = \hat{\mc{Q}}_{\ge' v^{\gamma}}$ and hence
    \[
    (\hat{X}_v)^\gamma \cong \hat{X}'_{v^\gamma}.
    \]
    
    In the notation of Definition~\ref{def:ant} we have $\left\{ v^{\gamma'}\,|\, \gamma'\in \Gamma \right\} \cap J_{v^\gamma} =\{v^\gamma\}$
    since $\|e_{v^\gamma} \| = \|e_{v^{\gamma'}} \|$ and $l(v^\gamma)=l(v^{\gamma'})$ for $\gamma'\in \Gamma$. Hence $I_{\le v^\gamma}\cap J_{v^\gamma}=I_{\le' v^\gamma}\cap J_{v^\gamma}$ and Corollary~\ref{cor:Xv_independent} yields
    \[
    \hat{X}_{v^\gamma} \cong \hat{X}'_{v^\gamma}.
    \]
    The claim follows.
\end{proof}
    
\begin{definition} \label{def:Xv_qs}
    Let $\le$ be a total order on $W$ extending the Bruhat order $\lebru$ and compatible with $\Gamma$-action. For $v\in W^P$ we put 
    \[
    I^{\Gamma}_v := \bigcap_{\gamma\in \Gamma} \left\{ w\in W^P\,|\, w< v^\gamma\right\},\quad
    \bar{I}^{\Gamma}_v := \left\{ w\in W^P\,|\, v\le w\right\} \cup \left\{v^\gamma\,|\, \gamma\in \Gamma\right\},
    \]
    and denote
    \[
        \hat{X}_{v,k(v)}:=(j_{\bar{I}^{\Gamma}_v,\, k(v)})_*j_{\bar{I}^{\Gamma}_v,\,k(v)}^* \hat{\mc{P}}_{v,\,k(v)}^*\in \D^b(\rep P_{k(v)}),
    \]
    where 
    \[
    j_{\bar{I}^{\Gamma}_v,\,{k(v)}}^*\colon \D^b(\rep P_{k(v)})\leftrightarrows \hat{\mc{Q}}_{\bar{I}^{\Gamma}_v,\,{k(v)}}\colon (j_{\bar{I}^{\Gamma}_v,\, k(v)})_*
    \]
    are the embedding and its left adjoint that exists by Proposition~\ref{prop:PQ_qs}.
\end{definition}

\begin{proposition} \label{prop:pushpull_X}
    Let $\le$ be a total order on $W$ extending the Bruhat order $\lebru$ and compatible with $\Gamma$-action. Then the following holds.
    \begin{enumerate}
        \item For $v\in W^P$ and $\gamma\in\Gamma$ we have $(\hat{X}_{v,k(v)} \otimes_{k(v)} L)^\gamma \cong \hat{X}_{v^\gamma}$.
        \item For $v\in W^P$ and $M\in \lrep{k(v)} G$ we have
    \[
        \left((r_{k(v)})_*\left(M \otimes_{k(v)} \hat{X}_{v,k(v)}\right)\right) \otimes_{k} L \cong \bigoplus_{\gamma \in \lfaktor{\Gamma_v}{\Gamma}} \left(M \otimes_{k(v)} L\right)^\gamma \otimes \hat{X}_{v^\gamma}.
    \]
    \end{enumerate} 
    Here $\hat{X}_{v^\gamma}$ is as in Definition~\ref{def:Xv} and $\Gamma_v=\Gal(L/k(v))$.
\end{proposition}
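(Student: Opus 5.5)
The plan is to prove (1) first, by comparing the projection defining $\hat{X}_{v,k(v)}$ with its counterpart over $L$. Then (2) will follow formally from (1) and Lemma~\ref{lem:pushpull}.

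Set $K:=k(v)$, so that $\Gamma_K=\Gamma_v$. Note that $\bar{I}^{\Gamma}_v$ is $\Gamma_v$-invariant, and its complement $I^\Gamma_v$ is a $\Gamma$-invariant initial segment, since $\le$ extends $\lebru$. Hence Proposition~\ref{prop:PQ_qs} applies. We take the defining triangle
\[
Y\to \hat{\mc{P}}^*_{v,K}\to \hat{X}_{v,K}\to Y[1],
\]
with $Y\in\hat{\mc{P}}_{I^\Gamma_v,K}$ and $\hat{X}_{v,K}\in \hat{\mc{Q}}_{\bar{I}^\Gamma_v,K}$, and apply $-\otimes_K L$. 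By Lemma~\ref{lem:PQ_ext_scalars} the base-changed triangle has first term in $\hat{\mc{P}}_{I^\Gamma_v}$ and third term in $\hat{\mc{Q}}_{\bar{I}^\Gamma_v}$. Its middle term is $\hat{\mc{P}}^*_v$. By uniqueness of such triangles in the decomposition~\refbr{eq:SOD_PQ} for the split group $G_L$, we get
\[
\hat{X}_{v,K}\otimes_K L\cong (j_{\bar{I}^\Gamma_v})_*j^*_{\bar{I}^\Gamma_v}\hat{\mc{P}}_v^*.
\]

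It remains to identify this with $\hat{X}_v=(j_{\ge v})_*j^*_{\ge v}\hat{\mc{P}}^*_v$. Here I would use Proposition~\ref{prop:projections_independent} with $I_1=I^\Gamma_v$ and $I_2=I_{<v}$, both initial segments. We must check that $I^\Gamma_v\cap J_v=I_{<v}\cap J_v$. By compatibility of $\le$ with the $\Gamma$-action, the set $I_{<v}\setminus I^\Gamma_v$ consists of elements $w$ with $v^{\gamma}\le w<v$ for some $\gamma$, and hence it lies inside the orbit $\{v^{\gamma'}\}$. For $w=v^{\gamma'}\ne v$ we have $l(w)=l(v)$, since $\Gamma$ permutes simple reflections. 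We also have $\|e_w\|=\|e_v\|$, by Lemma~\ref{lem:weight_defined} and since $\Gamma$ preserves the norm up to choosing a $\Gamma$-invariant form, which is the same point used in Lemma~\ref{lem:X_Gamma_action}. So $e_w\leant e_v$ would force $e_w=u e_v$, $e_v=u'e_v$ with a Bruhat comparison, and for distinct $w,v$ of equal length this fails. Thus these orbit elements are not in $J_v$, and the intersections agree. This gives $\hat{X}_{v,K}\otimes_K L\cong\hat{X}_v$, the case $\gamma=1$.

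For general $\gamma$, the functor $(-)^\gamma$ is exact and commutes with base change. Therefore $(\hat{X}_{v,K}\otimes_KL)^\gamma\cong(\hat X_v)^\gamma$, and Lemma~\ref{lem:X_Gamma_action} turns this into $\hat{X}_{v^\gamma}$. This proves (1).

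For (2), apply Lemma~\ref{lem:pushpull}(1) to $M\otimes_K\hat{X}_{v,K}$ to get
\[
\bigoplus_{\gamma\in\lfaktor{\Gamma_v}{\Gamma}}\left((M\otimes_KL)\otimes_L(\hat X_{v,K}\otimes_KL)\right)^\gamma.
\]
Then use that $(-)^\gamma$ is monoidal, and substitute (1).

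The main obstacle is the $J_v$-combinatorics in the second step, namely the claim that $\Gamma$-conjugates of $v$ distinct from $v$ are not $\leant$-below $v$. This needs care about whether $\Gamma$ preserves the inner product and about the definition of $\leant$ for equal-norm weights. If the direct argument falters, I would instead mimic the proof of Lemma~\ref{lem:X_Gamma_action}, where the same fact is asserted.
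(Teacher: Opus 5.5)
Your proof is correct. It differs from the paper's only in how a $v$ that is not minimal in its $\Gamma$-orbit is handled.

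\textbf{The paper's route.} It first takes $v$ to be the $\le$-smallest element of its orbit. Then $I^{\Gamma}_v=I_{<v}$ and $\bar I^{\Gamma}_v=I_{\ge v}$ hold literally, so base change and uniqueness of the decomposition triangle give $\hat X_{v,k(v)}\otimes_{k(v)}L\cong\hat X_v$ with no combinatorics. A general $v$ is then reduced to this case via the Galois-transport isomorphism $(\hat X_{v,k(v)}\otimes_{k(v)}L)^\gamma\cong\hat X_{v^\gamma,k(v^\gamma)}\otimes_{k(v^\gamma)}L$ and Lemma~\ref{lem:X_Gamma_action}.

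\textbf{Your route.} You treat every $v$ uniformly. The same base-change step identifies $\hat X_{v,k(v)}\otimes L$ with the projection for the segment $I^\Gamma_v$. Proposition~\ref{prop:projections_independent} then replaces $I^\Gamma_v$ by $I_{<v}$. This bypasses the transport isomorphism, which the paper states without proof, at the price of one more use of the $J_v$-independence result. The only extra input is that no $\Gamma$-conjugate $w\neq v$ lies in $J_v$. That is exactly what is asserted in the proof of Lemma~\ref{lem:X_Gamma_action} (take $\gamma=1$ there), with the same tacit choice of a $\Gamma$-invariant $W$-invariant form. So you use nothing beyond the paper's own tools. Part (2) is identical to the paper's argument.

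\textbf{Two points to tighten.}
\begin{enumerate}
\item Your sentence justifying $e_w\not\leant e_v$ is garbled as written. The Bruhat comparison in the definition of $\leant$ is between some $u,u'$ with $u\nu=e_w$ and $u'\nu=e_v$, and these need not have equal length. To make it precise:
\begin{itemize}
\item Equal norms force the second clause of $\leant$, so $\nu=we_w=ve_v$.
\item The stabiliser $W_\nu$ is generated by the $s_\alpha$ with $v^{-1}\alpha>0$. Hence $v^{-1}$ is the minimal representative of $\{u\mid u\nu=e_v\}=v^{-1}W_\nu$, and likewise $w^{-1}$ for $e_w$.
\item Passing to minimal coset representatives preserves $\lebru$, so one would get $v^{-1}\lebru w^{-1}$. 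This is impossible for distinct elements of equal length.
\end{itemize}
\item The $\Gamma$-invariance of $I^\Gamma_v$, and the fact that $\bar I^\Gamma_v$ is its complement, come from compatibility of $\le$ with the $\Gamma$-action. They do not follow merely from $\le$ extending $\lebru$.
\end{enumerate}
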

\begin{proof}
    First suppose that $v\le v^{\gamma}$ for every $\gamma\in \Gamma$. Then in the notation of Definition~\ref{def:Xv_qs} we have
    \[
    \bar{I}^{\Gamma}_{v} = \left\{w\in W^P\,|\, v\le w\right\} \quad I^{\Gamma}_{v}= \left\{w\in W^P\,|\, w< v\right\}.
    \]
    We have $\hat{\mc{P}}_{v,k(v)} \otimes_{k(v)} L \cong \hat{\mc{P}}_{v}$, and, by Lemma~\ref{lem:PQ_ext_scalars}, 
    \[
    \hat{\mc{Q}}_{\bar{I}^{\Gamma}_{v},\,k(v)}\otimes_{k(v)} L \subseteq \hat{\mc{Q}}_{\ge v},\quad     \hat{\mc{P}}_{I^{\Gamma}_{v},\,k(v)}\otimes_{k(v)} L \subseteq \hat{\mc{P}}_{< v},
    \]
    thus $\hat{X}_{v,k(v)} \otimes_{k(v)} L \cong \hat{X}_{v}$, so we get the first claim for $v$ being the minimal element in its $\Gamma$-orbit. The first claim for a general $v$ follows from the above, isomorphism
    \[
        (\hat{X}_{v,k(v)} \otimes_{k(v)} L)^{\gamma} \cong \hat{X}_{v^\gamma,k(v^\gamma)} \otimes_{k(v^\gamma)} L
    \]
    and Lemma~\ref{lem:X_Gamma_action}.

    The second claim follows from the first one and Lemma~\ref{lem:pushpull}.
\end{proof}

\begin{theorem} \label{thm:SOD_rep_qs}
    Let $\le$ be a total order on $W$ extending the Bruhat order $\lebru$ and compatible with the $\Gamma$-action, and $\{v_1,v_2,\hdots, v_m\}\subseteq W^P$ be a transversal for the $\Gamma$-action with elements enumerated in such a way that $v_m<v_{m-1}<\hdots<v_1$. Denote 
    \[
    \hat{X}_i:=\hat{X}_{v_i,k(v_i)}\in \D^b(\lrep{k(v_i)} P)
    \]
    in the notation of Definition~\ref{def:Xv_qs}.  Then $\{\hat{X}_i\}_{1\le i\le m}$ is a full $G$-relative \'etale-exceptional collection in $\D^b(\rep P)$, the functors 
    \[
    \Phi_i\colon \D^b(\lrep{k(v_i)} G) \to \D^b(\rep P), \quad M\mapsto (r_{k(v_i)})_*(M\otimes_{k(v_i)} \hat{X}_i),
    \]
    are fully faithful and there is a semiorthogonal decomposition
        \[
        \D^b(\rep P) = \langle \Phi_1(\D^b(\lrep{k(v_1)} G)), \Phi_2(\D^b(\lrep{k(v_2)} G)), \hdots, \Phi_{m}(\D^b(\lrep{k(v_m)} G))\rangle.
        \] 
\end{theorem}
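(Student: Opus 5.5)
The plan is to reduce every claim to the split case over $L$, i.e.\ to Theorem~\ref{thm:SOD_SvdK}, using that extension of scalars $-\otimes_k L$ is faithful and conservative (Lemma~\ref{lem:pushpull}). Proposition~\ref{prop:pushpull_X} describes the base change of every object $(r_{k(v_i)})_*(M\otimes_{k(v_i)}\hat{X}_i)$ explicitly. Once $\{\hat{X}_i\}$ is shown to be a full $G$-relative \'etale-exceptional collection, with $A_i:=k(v_i)$ carrying the trivial $G$-action, full faithfulness of the $\Phi_i$ and the semiorthogonal decomposition follow from Proposition~\ref{prop:SOD_from_collection_rep}. Here I view $\hat{X}_i$ as an object of $\D^b(\lrep{k(v_i)} P)$ and identify $M\otimes_{k(v_i)}\hat{X}_i$, computed over $A_i$, with $(r_{k(v_i)})_*(M\otimes_{k(v_i)}\hat X_i)$.

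\emph{Exceptionality.} For each $i$ I need that $k(v_i)\to \ind^G_P\iEnd_{\D^b(\rep P)}(\hat{X}_i)$ is an isomorphism, where $\iEnd$ is taken after forgetting to $k$-linear representations. Because the comparison map is defined over $k$, it suffices to check this after $-\otimes_k L$. By Lemma~\ref{lem:pushpull}(1) and Proposition~\ref{prop:pushpull_X}(1), $(r_{k(v_i)})_*\hat{X}_i\otimes_k L\cong\bigoplus_{\gamma\in\lfaktor{\Gamma_{v_i}}{\Gamma}}\hat{X}_{v_i^\gamma}$, so the target becomes $\bigoplus_{\gamma,\gamma'}\ind^{G_L}_{P_L}\iHom(\hat{X}_{v_i^\gamma},\hat{X}_{v_i^{\gamma'}})$.

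\emph{Orthogonality within an orbit.} The key point, and what I expect to be the main obstacle, is showing that the off-diagonal terms in this sum vanish. Since $\le$ is compatible with the $\Gamma$-action, the elements $v_i^\gamma$ form a consecutive segment in $W^P$. They all share the same length and, by Lemma~\ref{lem:weight_defined}, the same $\|e_{v_i^\gamma}\|$. Hence the hypotheses \refbr{eq:lengths} of Corollary~\ref{cor:orthogonal} hold, so the summands $\Phi_l(\D^b(\rep G_L))$ for this orbit are mutually orthogonal. This gives the vanishing of the $\Hom$ groups between $N_1\otimes\hat X_{v_i^\gamma}$ and $N_2\otimes\hat X_{v_i^{\gamma'}}$, and by adjunction the vanishing of $\ind\iHom$. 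The diagonal terms are each $L$ by Theorem~\ref{thm:SOD_SvdK}, so the target is $L^{[k(v_i):k]}$, which matches $k(v_i)\otimes_k L\cong L^{[k(v_i):k]}$. It remains to identify the map. I will check that it is the canonical one componentwise, namely the structure map of $\hat{X}_{v_i^\gamma}$ composed with the $\gamma$-twisted projection. This is bookkeeping but must be done with care about which embedding $k(v_i)\to L$ goes with which $\gamma$.

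\emph{Semiorthogonality and fullness.} For $j<i$, the base change of $\ind^G_P\iHom(\hat X_i,\hat X_j)$ is a sum of terms $\ind\iHom(\hat X_{v_i^\gamma},\hat X_{v_j^{\gamma'}})$. By construction $v_i<v_j$, and compatibility of $\le$ with $\Gamma$ makes whole orbits ordered blockwise, so $v_i^\gamma<v_j^{\gamma'}$. These terms therefore vanish by Theorem~\ref{thm:SOD_SvdK}, and conservativity gives the vanishing over $k$. For fullness I apply Theorem~\ref{thm:generation} with $N_{v_i}:=\hat X_i$ and $\Theta=\{v_i\}$. Its hypothesis is exactly the generation statement of Theorem~\ref{thm:SOD_SvdK}, because $(\hat X_i\otimes_{k(v_i)}L)^\gamma\cong\hat X_{v_i^\gamma}$ by Proposition~\ref{prop:pushpull_X}(1), and these objects run over all of $W^P$. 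The conclusion \refbr{eq:gen_P} is precisely the fullness condition of Definition~\ref{def:rel_rep}, and the decomposition follows.
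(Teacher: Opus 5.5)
Your proposal is correct and follows essentially the same route as the paper: base change to $L$ via Proposition~\ref{prop:pushpull_X}, vanishing within an orbit from Corollary~\ref{cor:orthogonal}, vanishing between orbits from Theorem~\ref{thm:SOD_SvdK} together with the blockwise ordering, fullness via Theorem~\ref{thm:generation}, and conservativity of extension of scalars. One difference: you explicitly justify that each $\Gamma$-orbit is an interval for $\le$, so that the hypothesis \refbr{eq:lengths} holds for all intermediate elements, whereas the paper leaves this step implicit.
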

\begin{proof}
    For $1\le i\le m$ denote $(r_i)_*:= (r_{k(v_i)})_*$ and $\Gamma_i:=\Gal(L/k(v_i))\leqslant \Gamma$. For $1\le i,j\le m$ by Proposition~\ref{prop:pushpull_X} we have
    \begin{multline*}
    \left(\ind^G_P \iHom_{\D^b(\rep P)}(\hat{X}_i,\hat{X}_j)\right) \otimes L
    \cong
    \ind^{G}_{P} \iHom_{\D^b(\lrep{L} P)}(\hat{X}_i\otimes L,\hat{X}_j\otimes L) 
    \cong 
    \\
    \cong \bigoplus_{\gamma\in \lfaktor{\Gamma_i}{\Gamma},\, \gamma'\in \lfaktor{\Gamma_j}{\Gamma}} \ind^{G}_{P} \iHom_{\D^b(\lrep{L} P)}(\hat{X}_{v_i^\gamma}, \hat{X}_{v_j^{\gamma'}}),
    \end{multline*}
    where $\hat{X}_{v_i^\gamma}, \hat{X}_{v_j^{\gamma'}}\in \D^b(\lrep{L} P)$ are as in Definition~\ref{def:Xv}. 

    Recall that $\hat{X}_{v_i^\gamma}$ form a $G$-relative exceptional collection by Theorem~\ref{thm:SOD_SvdK}, thus applying Corollary~\ref{cor:orthogonal} we obtain
    \[
        \ind^{G}_{P} \iHom_{\D^b(\lrep{L} P)}(\hat{X}_{v_i^\gamma}, \hat{X}_{v_i^{\gamma'}})=
        \begin{cases}
            L,&\gamma=\gamma',\\
            0,&\gamma\neq \gamma'.
        \end{cases}
    \]
    Hence
    \[
    \left(\ind^G_P \iHom_{\D^b(\rep P)}(\hat{X}_i,\hat{X}_i)\right) \otimes L\cong \bigoplus_{\gamma\in \lfaktor{\Gamma_i}{\Gamma}} L \cong k(v_i) \otimes_k L,
    \]
    and it is straightforward to check that this isomorphism coincides with $\theta_{\hat{X}_i}\otimes L$ for the canonical homomorphism
    \[
    \theta_{\hat{X}_i}\colon k(v_i) \to \left(\ind^G_P \iHom_{\D^b(\rep P)}(\hat{X}_i,\hat{X}_i)\right).
    \]
    Since extension of scalars to $L$ is conservative then $\theta_{\hat{X}_i}$ is an isomorphism, hence $\hat{X}_i$ is $k(v_i)$-exceptional.
    
    For $i>j$ we have $v_i^\gamma <v_j^{\gamma'}$ since $\le$ is compatible with $\Gamma$-action. Thus $\ind^{G}_{P}\iHom_{\D^b(\lrep{L} P)}(\hat{X}_{v_i^\gamma}, \hat{X}_{v_j^{\gamma'}})=0$ by Theorem~\ref{thm:SOD_SvdK}, yielding 
    \[
    \left(\ind^G_P \iHom_{\D^b(\rep P)}(\hat{X}_i,\hat{X}_j)\right) \otimes L=0.
    \]
    Since the extension of scalars to $L$ is conservative, we find that $\{\hat{X}_i\}_{1\le i\le m}$ form a $G$-relative \'etale-exceptional collection.

    For $1\le i \le m$ and $\gamma\in \Gamma$ by Proposition~\ref{prop:pushpull_X} and Lemma~\ref{lem:X_Gamma_action} we have
    \[
    \hat{X}_i\otimes_{k(v_i)} L \cong \hat{X}_{v_i},\quad (\hat{X}_i\otimes_{k(v_i)} L)^{\gamma}\cong \hat{X}_{v_i^\gamma}.
    \]
    Thus Theorem~\ref{thm:SOD_SvdK} allows us to apply Theorem~\ref{thm:generation} obtaining the fullness property of the collection.
    
    Hence $\{\hat{X}_i\}_{1\le i\le m}$ is a full $G$-relative \'etale-exceptional collection in $\D^b(\rep P)$.

    The remaining claims follow from Proposition~\ref{prop:SOD_from_collection_rep}.
\end{proof}

    

\subsection{Representations of isogenous groups}
\label{sec:representations_isogenous}
In this section we study the relation between the derived categories of representations of isogenous affine algebraic groups showing that the derived category of representations of the covering group decomposes into a direct sum of derived categories of representations over some separable algebras for the quotient group.

Recall that a (not necessarily reduced) affine algebraic group scheme $Z$ over a field $k$ is \textit{of multiplicative type} if $Z_{\ksep}$ is diagonalizable \cite[Chapter~12]{Mi17}. A field extension $L/k$ \textit{splits} $Z$ if $Z_L$ is diagonalizable. The category of group schemes of multiplicative type over a field $k$ is dual to the category of finitely generated $\mathbb{Z}$-modules equipped with a continuous right action of $\mathrm{Gal}(\ksep/k)$ with the contravariant equivalence given by 
\[
Z\mapsto \Hom(Z_{\ksep},(\Gm_m)_{\ksep}).
\]
In this section $\rho \colon \tilde{G}\to G$ is a surjective homomorphism of affine algebraic groups over $k$ with $Z:=\ker \rho\leqslant \tilde{G}$ being a central subgroup scheme of multiplicative type, and $L/k$ is a finite Galois field extension splitting $Z$. We denote $\Gamma:=\Gal(L/k)$.


\begin{definition} \label{def:blocks}
Recall that the category of representations $\rep Z$ is semisimple, and the isomorphism classes of simple objects in $\rep Z$ are naturally parametrized by the Galois orbits $\rfaktor{X^*(Z)}{\Gamma}$ \cite[Theorem~12.30]{Mi17} with $X^*(Z):=\Hom(Z_L,(\Gm_m)_L)$. 

Let $M\in \rep \tilde{G}$, then $\res^{\tilde{G}}_Z (M\otimes L)$ decomposes into a direct sum of its eigenspaces $(\res^{\tilde{G}}_Z (M\otimes L))_\lambda$, $\lambda\in X^*(Z)$, which descends to the direct sum decomposition
        \[
            \res^{\tilde{G}}_Z M = \bigoplus_{\theta \in \rfaktor{X^*(Z)}{\Gamma}} (\res^{\tilde{G}}_Z M)_{\theta},
        \]
    such that $(\res^{\tilde{G}}_Z M)_{\theta}\otimes L = \bigoplus_{\lambda \in \theta} (\res^{\tilde{G}}_Z (M\otimes L))_{\lambda}$. Since $Z$ is central in $\tilde{G}$, the eigenspaces are stable under the action of $\tilde{G}$, yielding a canonical decomposition
    \[
    M=\bigoplus_{\theta \in \rfaktor{X^*(Z)}{\Gamma}} M_{\theta}
    \]
    in $\rep \tilde{G}$. There are no nontrivial homomorphisms between different eigenspaces, thus we obtain an orthogonal decomposition of abelian categories
    \begin{equation} \label{eq:rep_blocks}
    \rep \tilde{G} = \bigoplus_{\theta \in \rfaktor{X^*(Z)}{\Gamma}} (\rep \tilde{G})_\theta,
    \end{equation}
    where $(\rep \tilde{G})_\theta$ denotes the full subcategory of $\rep \tilde{G}$ consisting of representations $M$ with $M=M_\theta$, i.e. such that $Z$ acts on $M\otimes L$ through the characters $\lambda\in \theta$. Consequently, we have an orthogonal decomposition
    \begin{equation} \label{eq:Db_rep_blocks}
    \D^b(\rep \tilde{G}) = \bigoplus_{\theta \in \rfaktor{X^*(Z)}{\Gamma}} \D^b(\rep \tilde{G})_\theta
    \end{equation}
    with $\D^b(\rep \tilde{G})_\theta:=\D^b((\rep \tilde{G})_\theta)$. These decompositions are compatible with the tensor product of representations, 
    \[
    \begin{gathered}
    (\rep \tilde{G})_\theta\otimes (\rep \tilde{G})_{\theta'}\subseteq \bigoplus_{\theta''\subseteq \theta+\theta'}(\rep \tilde{G})_{\theta''},\\
    \D^b(\rep \tilde{G})_\theta\otimes \D^b(\rep \tilde{G})_{\theta'}\subseteq \bigoplus_{\theta''\subseteq \theta+\theta'} \D^b(\rep \tilde{G})_{\theta''}.
    \end{gathered}
    \]
    Note that in general for $\theta,\theta'\in \rfaktor{X^*(Z)}{\Gamma}$ the set of characters $\theta+\theta'$ is a union of several $\Gamma$-orbits, but if $\theta'=\{\lambda\}$ for $\lambda\in X^*(Z)^{\Gamma}$ then $\theta+\theta'$ is a single orbit. 
\end{definition}

\begin{definition} \label{def:Azumaya_equiv}
    For $M\in \rep \tilde{G}$ we have $\iEnd M\in\Alg_{\tilde{G}} k$ which is given by the endomorphism algebra $\End M$ of the underlying vector space equipped with the conjugation action of $\tilde{G}$. Tautologically, we have $M\in \lrep{\left(\iEnd M\right)} \tilde{G}$. Suppose that $M=M_{\{\lambda\}}$ for some $\lambda\in X^*(Z)^{\Gamma}$. Then the action of $Z$ on $\iEnd M$ is trivial and the action of $\tilde{G}$ on $\iEnd M$ descends to an action of $G$ giving rise to an equivariant algebra $A_M\in \Alg_G k$ satisfying $\res^{G}_{\tilde{G}} A_M \cong \iEnd M$. Forgetting the equivariant structure we have $A_M\cong M_m(k)$ with $m=\dim_k M$, so $A_M\in \CSAlg_G k$.
\end{definition}

\begin{lemma} \label{lem:Azumaya_equiv}
    Let $\lambda\in X^*(Z)^{\Gamma}$ and $M,N\in(\rep \tilde{G})_{\{\lambda\}}$. Then $Z$ acts trivially on $\iHom(N,M)$ and we denote $A_{N,M}\in 
    \lrep{\left(A_M\otimes A_N^{\op}\right)} G$ the corresponding representation satisfying $\res^G_{\tilde{G}} A_{N,M} \cong\Hom (N,M)$. Then
        \[
        A_N\otimes \iEnd A_{N,M} \cong A_M \otimes \iEnd A_{N}
        \]
        in $\Alg_G k$ and the functor
        \[
        A_{N,M} \otimes_{A_N} - \colon \lrep{A_N} G \to \lrep{A_M} G
        \]
        is an equivalence.
\end{lemma}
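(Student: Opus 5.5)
The plan is Morita theory carried out inside $\rep G$. First, $Z$ acts trivially on $\iHom(N,M)$: since $Z$ is central, it acts on both $N$ and $M$ through the same character $\lambda$. Hence on $\iHom(N,M)=N^*\otimes M$ it acts through $-\lambda+\lambda=0$. Concretely, after extending scalars to $L$ the conjugation action of $z$ is multiplication by $\lambda(z)^{-1}\lambda(z)=1$, so the action descends to $G$. The same argument shows that $Z$ acts trivially on $\iEnd M$, on $\iEnd N$, and on their actions on $\iHom(N,M)$ by post- and pre-composition. Therefore $A_{N,M}$ is a $G$-equivariant $(A_M,A_N)$-bimodule, i.e. an object of $\lrep{(A_M\otimes A_N^\op)} G$. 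Symmetrically $A_{M,N}$ is an $(A_N,A_M)$-bimodule.

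For the algebra isomorphism, consider the natural map $A_M\otimes A_N^{\op}\to \iEnd A_{N,M}$ given by $a\otimes b\mapsto (f\mapsto a f b)$. After restricting to $\tilde G$ and forgetting the action, this is the classical map $\End M\otimes(\End N)^\op\to\End(\Hom(N,M))$, which is an isomorphism of algebras. It is $G$-equivariant because it is $\tilde G$-equivariant and $\res^G_{\tilde G}$ is faithful. Since $\rho$ is faithfully flat, $\res^G_{\tilde G}$ is fully faithful on representations, so the map is an isomorphism in $\Alg_G k$. Tensoring with $A_N$ gives
\[
A_N\otimes\iEnd A_{N,M}\cong A_M\otimes A_N\otimes A_N^\op\cong A_M\otimes\iEnd A_N,
\]
where the last step applies the same argument to the pair $(N,N)$, giving $A_N\otimes A_N^\op\cong\iEnd A_{N,N}=\iEnd A_N$ as $G$-algebras. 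The identification $\Hom(N,N)=\End N$ carries $A_{N,N}$ to $A_N$ as a $G$-representation.

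For the equivalence, I will show that $A_{M,N}\otimes_{A_M}-$ is a quasi-inverse. Composition of homomorphisms gives maps
\[
A_{N,M}\otimes_{A_N}A_{M,N}\to A_M,\qquad A_{M,N}\otimes_{A_M}A_{N,M}\to A_N .
\]
These are bimodule maps in $\rep G$, being $\tilde G$-equivariant. Forgetting the actions, they are the standard Morita isomorphisms for the matrix algebras $\End M$ and $\End N$ with the progenerator $\Hom(N,M)$: one has $\Hom(N,M)\otimes_{\End N}\Hom(M,N)\cong\End M$ because $N$ and $M$ are nonzero vector spaces. Since the forgetful functor is faithful and exact, these maps are isomorphisms of equivariant bimodules. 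Associativity of $\otimes$ then gives natural isomorphisms
\[
A_{M,N}\otimes_{A_M}(A_{N,M}\otimes_{A_N}X)\cong X,
\]
and symmetrically in the other order.

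The only point that requires care is the descent from $\tilde G$ to $G$. I must check that for representations on which $Z$ acts trivially, $G$-structures, $G$-equivariant maps and $G$-algebra structures are the same as the corresponding $\tilde G$-data. This follows from $G=\tilde G/Z$ as an fppf quotient together with $\rho$ being faithfully flat, so that $\rep G$ is identified with the full subcategory of $\rep\tilde G$ on which $Z$ acts trivially. I also need to confirm that the tensor products over $A_N$ and $A_M$ commute with the forgetful functor. This holds because both categories of modules are additive with exact forgetful functors, and the relative tensor product is a cokernel, which is preserved.
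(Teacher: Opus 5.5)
Your proposal is correct and follows essentially the paper's route. You reduce to $\tilde G$ using that $\res^G_{\tilde G}$ is fully faithful with image the representations on which $Z$ acts trivially. You get the algebra isomorphism from the standard identities for endomorphism algebras: the paper bracketing $\iEnd(N\otimes N^*\otimes M)$ in two ways, you via $\iEnd A_{N,M}\cong A_M\otimes A_N^{\op}$. You then invert $A_{N,M}\otimes_{A_N}-$ with the Morita bimodule $A_{M,N}\cong A_{N,M}^*$.

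One small slip: deducing $G$-equivariance from $\tilde G$-equivariance uses fullness, not faithfulness, of $\res^G_{\tilde G}$, though you invoke full faithfulness in the next sentence anyway.
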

\begin{proof}
    The functor $\res^G_{\tilde{G}}\colon \rep G \to \rep \tilde{G}$ is fully faithful, so in order to obtain the isomorphism we may assume that $G=\tilde{G}$, so $N,M\in \rep G$ and $A_{N,M}\cong N^*\otimes M$. Then
    \[
    A_N\otimes \iEnd A_{N,M} \cong \iEnd (N\otimes N^*\otimes M)\cong A_M \otimes \iEnd A_N
    \]
    which give the first claim.

    For the second claim note that $A_{N,M}^*\otimes_{A_M} A_{N,M} \cong A_N$ in $\lrep{\left(A_N\otimes A_N^{\op}\right)} G$, which can be seen applying restriction to $\tilde{G}$ as above and considering isomorphisms
    \[
    A_{N,M}^*\otimes_{A_M} A_{N,M}\cong \iHom(M,N)\otimes_{\iEnd M} \iHom(N,M) \cong \iEnd N= A_N.
    \]
    Similarly, $A_{N,M}\otimes_{A_N} A_{N,M}^* \cong A_M$ in $\lrep{\left(A_M\otimes A_M^{\op}\right)} G$. Hence the functor
    \[
    A_{N,M}^*\otimes_{A_M} - \colon  \lrep{A_M} G \to \lrep{A_N} G
    \]
    gives the inverse equivalence.
\end{proof}

\begin{remark}
    The above lemma says that the class of $A_M$ in the $G$-equivariant Brauer group of $k$ depends only on the character of the $Z$-action on $M$ and not on the representation $M$ itself.
\end{remark}

\begin{lemma}\label{lem:character_rep_exist}
    For every $\theta\in \rfaktor{X^*(Z)}{\Gamma}$ the category $(\rep \tilde{G})_{\theta}$ is non-zero.
\end{lemma}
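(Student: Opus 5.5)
The plan is to reduce to showing that some character $\lambda\in X^*(Z)$ occurs in the restriction to $Z_L$ of some $\tilde{G}$-representation over $k$. Indeed, given $M\in\rep\tilde{G}$ such that $\lambda\in\theta$ occurs in $\res^{\tilde{G}}_Z(M\otimes L)$, the summand $M_\theta$ in the canonical decomposition $M=\bigoplus_\theta M_\theta$ of Definition~\ref{def:blocks} satisfies $M_\theta\otimes L=\bigoplus_{\lambda'\in\theta}(\res^{\tilde{G}}_Z(M\otimes L))_{\lambda'}\neq 0$. Hence $M_\theta$ is a nonzero object of $(\rep\tilde{G})_\theta$.

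To produce such an $M$, I will use the regular representation. Since $\tilde{G}$ is affine, $k[\tilde{G}]$ is a locally finite $\tilde{G}$-representation, namely a union of finite dimensional subrepresentations. The closed immersion $Z\hookrightarrow\tilde{G}$ gives a $Z$-equivariant surjection $k[\tilde{G}]\to k[Z]$ for the restricted (say right translation) action. Over $L$, $Z_L$ is diagonalizable, so $L[Z]=\bigoplus_{\lambda\in X^*(Z)}L\cdot\lambda$, and every character $\lambda$ occurs as an eigenvector of the regular representation of $Z_L$. Choose $f\in L[\tilde{G}]$ mapping to $\lambda$. Then choose a finite dimensional $\tilde{G}$-subrepresentation $M\subseteq k[\tilde{G}]$ such that $M\otimes L$ contains $f$; this is possible by local finiteness, taking the span over $k$ of finitely many coordinates of $f$.

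The restriction map $M\otimes L\to L[Z]$ is $Z_L$-equivariant, and its image contains $\lambda$. Since $\rep Z_L$ is semisimple with eigenspace decomposition, a $Z_L$-equivariant map preserves eigenspaces. Therefore the $\lambda$-eigenspace of $M\otimes L$ surjects onto the line $L\cdot\lambda$, and in particular it is nonzero. This completes the argument.

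The only step needing care is the compatibility of the right translation action with the restriction map, together with the sign or convention issue of which character actually appears. Both are harmless: if the convention yields $\lambda^{-1}$ instead of $\lambda$, then I will apply the argument to $\lambda^{-1}$, or use the left translation action. Alternatively, dualize $M$, which replaces each $Z$-character by its inverse.
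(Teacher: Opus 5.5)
Your proof is correct and follows essentially the same route as the paper: both take the restriction map from the regular representation $k[\tilde{G}]\to k[Z]$, lift a suitable element, and use local finiteness to place it inside a finite-dimensional $\tilde{G}$-subrepresentation $M\subseteq k[\tilde{G}]$ with $M_\theta\neq 0$. The only difference is cosmetic. The paper stays over $k$: it picks a nonzero $N\subseteq k[Z]$ of type $\theta$ and lifts it through a $Z$-equivariant splitting, which exists by semisimplicity of representations of $Z$. You instead lift the character $\lambda$ itself over $L$ and use that equivariant maps preserve eigenspaces.
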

\begin{proof}
    Consider the regular representations $k[\tilde{G}]$ of $\tilde{G}$ and $k[Z]$ of $Z$. It follows from \cite[Corollary~4.13]{Mi17} that $k[Z]$ contains all finite dimensional irreducible $k$-representations of $Z$, in particular, there is $ \{0\}\neq N\subseteq  k[Z]$ such that $N\in (\rep Z)_{\theta}$. Note that the inclusion $Z\subseteq \tilde{G}$ yields a surjection $\res^{\tilde{G}}_Z k[\tilde{G}] \to k[Z]$ of $k$-representations of $Z$ which admits a splitting $s\colon k[Z] \to \res^{\tilde{G}}_Z k[\tilde{G}] $ by \cite[Theorem~12.30]{Mi17}. Let $M\subseteq k[\tilde{G}]$ be a finite-dimensional $\tilde{G}$-subrepresentation of $k[\tilde{G}]$ containing $s(N)$ \cite[Corollary~4.8]{Mi17}. We have $s(N)\subseteq M_{\theta}$ thus $0\neq M_\theta\in (\rep \tilde{G})_{\theta}$.
\end{proof}

The following is a generalization of \cite[Proposition~1.9]{Ela09} to groups of multiplicative type.
\begin{proposition} \label{prop:Elagin}
     Let $\lambda\in X^*(Z)$, put $\Gamma_{\lambda}:=\mathrm{Stab}_{\Gamma} \lambda \leqslant \Gamma$ and let $k\subseteq k(\lambda)\subseteq L$ be the corresponding intermediate field extension. Put
    \[
    \theta:=\{\lambda^\gamma\,|\, \gamma\in \Gamma\}\in \rfaktor{X^*(Z)}{\Gamma}.
    \]
    Then the following holds.
    \begin{enumerate}
        \item The functor of restriction of scalars
        \[
        (r_{k(\lambda)})_*\colon (\lrep{k(\lambda)} \tilde{G})_{\{\lambda\}} \to \rep \tilde{G}
        \]
        is fully faithful with the essential image $(\rep \tilde{G})_{\theta}$.
        \item Let $0\neq M\in (\lrep{k(\lambda)} \tilde{G})_{\{-\lambda\}}$, and consider $A_M\in \CSAlg_G k(\lambda)$ as in Definition~\ref{def:Azumaya_equiv}. Then
        \[
        \Phi\colon \lrep{A_M} G \to \rep \tilde{G},\quad W\mapsto  (r_{k(\lambda)})_*\left(\res^{G}_{\tilde{G}}W \otimes_{\res^{G}_{\tilde{G}} A_M^\op} M^*\right),
        \]
        is a fully faithful functor with the essential image $(\rep \tilde{G})_{\theta}$.
    \end{enumerate}

\end{proposition}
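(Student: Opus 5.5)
We prove (1) first and then deduce (2) from (1) together with Lemma~\ref{lem:Azumaya_equiv}-type Morita arguments, working throughout after extension of scalars to $L$ and descending.

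For (1), take $N\in(\lrep{k(\lambda)}\tilde{G})_{\{\lambda\}}$. Lemma~\ref{lem:pushpull} gives
\[
(r_{k(\lambda)})_*N\otimes_k L\cong\bigoplus_{\gamma\in\lfaktor{\Gamma_\lambda}{\Gamma}}(N\otimes_{k(\lambda)}L)^\gamma .
\]
The summand indexed by $\gamma$ is a $\lambda^\gamma$-eigenspace for $Z$. Since the characters $\lambda^\gamma$ over the cosets are pairwise distinct and exhaust $\theta$, the image lands in $(\rep\tilde{G})_\theta$.

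For full faithfulness, use the adjunction between extension and restriction of scalars:
\[
\Hom_{\tilde G}((r_{k(\lambda)})_*N,(r_{k(\lambda)})_*N')\cong\Hom_{k(\lambda),\tilde G}\bigl((r_{k(\lambda)})_*N\otimes_k k(\lambda),N'\bigr).
\]
Now $(r_{k(\lambda)})_*N\otimes_k k(\lambda)$ splits over $k(\lambda)$ according to $\Gamma_\lambda$-orbits in $\theta$, and $\{\lambda\}$ is one of these orbits. The only summand that can map nontrivially to $N'$ is its $\{\lambda\}$-component, and this component is $N$ itself, as one checks after base change to $L$. This gives $\Hom(N,N')$.

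For essential surjectivity, take $M\in(\rep\tilde G)_\theta$. Its $\{\lambda\}$-isotypic part $N:=(M\otimes_k k(\lambda))_{\{\lambda\}}$ lies in $(\lrep{k(\lambda)}\tilde G)_{\{\lambda\}}$. The counit $(r_{k(\lambda)})_*N\to M$ of the adjunction $(r_{k(\lambda)})_*\dashv(-\otimes_k k(\lambda))$, obtained by restricting the counit to this summand, is then checked to be an isomorphism after $\otimes_k L$ by comparing eigenspaces: both sides have $\lambda^\gamma$-eigenspace $(M\otimes L)_{\lambda^\gamma}$.

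For (2), work over $K:=k(\lambda)$, over which $\lambda$ is rational. Then $(\lrep{K}\tilde G)_{\{-\lambda\}}$ plays the role of the block in Definition~\ref{def:Azumaya_equiv}, and by (1) it suffices to show that
\[
W\mapsto \res W\otimes_{A_M^\op}M^*
\]
is an equivalence $\lrep{A_M}G\to(\lrep{K}\tilde G)_{\{\lambda\}}$. The target block is correct because $M^*$ has $Z$-character $\lambda$ and $Z$ acts trivially on $W$. Here $\lrep{A_M}G$ means $A_M$-modules over $K$, and $\Phi$ is this equivalence followed by $(r_K)_*$.

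The inverse functor will be $V\mapsto\iHom_{\tilde G}$-internal $\Hom_K(M^*,V)=M\otimes_K V$. This carries a natural $A_M$-action through $M$, and $Z$ acts trivially on it since the characters $-\lambda$ and $\lambda$ cancel; hence the $\tilde G$-action descends to $G$. The two composites are identified with the identity via the Morita isomorphisms
\[
M\otimes_K M^*\cong\End M=A_M,\qquad M^*\otimes_{A_M}M\cong K .
\]
These hold as $\tilde G$-equivariant bimodules, exactly as in the proof of Lemma~\ref{lem:Azumaya_equiv}, and equivariance can be checked after restriction to $\tilde G$, where $\res^G_{\tilde G}$ is fully faithful. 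Nonvanishing of $M$ ensures that $A_M$ is genuinely $\End_K M$ with $M\ne0$, so that ordinary Morita theory for matrix algebras applies; such an $M$ exists by Lemma~\ref{lem:character_rep_exist}, applied over $K$.

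The main obstacle is the essential surjectivity in (1), specifically justifying that the isotypic decomposition over $k(\lambda)$ has the claimed form, namely that $\{\lambda\}$ is a single $\Gamma_\lambda$-orbit whose component recovers $M$. The first thing I would try is a direct descent argument for eigenspaces, imitating the construction of $M_\theta$ in Definition~\ref{def:blocks} but over the field $k(\lambda)$ instead of $k$.
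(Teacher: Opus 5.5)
Your proposal is correct.

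\textbf{Part (2).} This is essentially the paper's argument. The paper reduces to $k(\lambda)=k$ using part (1). It then proves full faithfulness by an adjunction computation that comes down to $M\otimes M^*\cong A_M$. Essential surjectivity comes from descending $N\otimes M$ to $A_{M^*,N}$, which is exactly your quasi-inverse $V\mapsto M\otimes_K V$. Packaging this as a pair of Morita isomorphisms is only a cosmetic difference.

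\textbf{Part (1), full faithfulness.} Here your route genuinely differs. The paper extends scalars to $L$ and uses orthogonality of the pieces $(N\otimes_{k(\lambda)}L)^\gamma\in(\lrep{L} \tilde{G})_{\lambda^\gamma}$. You stay over $k(\lambda)$, using the adjunction $(-\otimes_k k(\lambda))\dashv(r_{k(\lambda)})_*$ and the block decomposition of $\lrep{k(\lambda)} \tilde{G}$ by $\Gamma_\lambda$-orbits. That decomposition is just Definition~\ref{def:blocks} for the Galois extension $L/k(\lambda)$, and $\{\lambda\}$ is a $\Gamma_\lambda$-orbit because $\Gamma_\lambda$ fixes $\lambda$. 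So the ``main obstacle'' you flag at the end is not one.

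\textbf{Part (1), essential surjectivity.} The paper puts a $k(\lambda)$-structure directly on $M$. It descends the $\Gamma$-equivariant embedding $k(\lambda)\otimes L\cong\bigoplus_{\lambda'\in\theta}L\hookrightarrow\End(M)\otimes L$ to $k(\lambda)\hookrightarrow\End_{\rep \tilde{G}}M$; membership in the $\{\lambda\}$-block is then read off from the construction. You instead build the preimage as $N=(M\otimes_k k(\lambda))_{\{\lambda\}}$. This makes block membership automatic and yields an explicit quasi-inverse of $(r_{k(\lambda)})_*$, at the price of checking that the counit is an isomorphism.

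\textbf{The counit check.} Matching eigenspaces abstractly is not quite enough; you need the map itself. Use the splitting $k(\lambda)\otimes_k L\cong\prod_{\gamma\in\lfaktor{\Gamma_\lambda}{\Gamma}}L$. Then $((r_{k(\lambda)})_*N)\otimes_k L\cong\bigoplus_{\gamma}(M\otimes L)_{\lambda^\gamma}$. The counit normalized by the trace form becomes the sum of the inclusions of these eigenspaces into $M\otimes L$. This is an isomorphism because $M\in(\rep \tilde{G})_\theta$ and $\gamma\mapsto\lambda^\gamma$ is a bijection $\lfaktor{\Gamma_\lambda}{\Gamma}\to\theta$.
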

\begin{proof}
    Put $r_*:=(r_{k(\lambda)})_*$.
    
    1. Let $\mc{T}$ be a system of representatives of left cosets $\lfaktor{\Gamma_{\lambda}}{\Gamma}$. For $M\in \lrep{k(\lambda)} \tilde{G}$ we have 
    \[
    r_*M\otimes L \cong \bigoplus_{\gamma\in \mc{T}} (M \otimes_{k(\lambda)} L)^{\gamma}
    \]
    with $(M \otimes_{k(\lambda)} L)^{\gamma} \in ((\lrep{L} \tilde{G})_{\lambda})^{\gamma}= (\lrep{L} \tilde{G})_{{\lambda}^{\gamma}}$, hence $r_* M\in (\rep \tilde{G})_{\theta}$.
    
    Let $M,N\in (\lrep{k(\lambda)} \tilde{G})_{\{\lambda\}} $, we need to show that the canonical homomorphism of $k$-vector spaces
    \[
    r_*\colon \Hom_{\lrep{k(\lambda)} \tilde{G}}(M,N)\to \Hom_{\rep \tilde{G}} (r_*M,r_*N)
    \]
    is an isomorphism. It suffices to do so after extending the scalars to $L$. We have
    \begin{multline*}
    \left(\Hom_{\lrep{k(\lambda)} \tilde{G}} (M,N)\right)\otimes L
    \cong \bigoplus_{\gamma\in \mc{T}} \left(\Hom_{\lrep{k(\lambda)} \tilde{G}} (M,N)\otimes_{k(\lambda)} L\right)^{\gamma} \cong\\
    \cong \bigoplus_{\gamma\in \mc{T}} \Hom_{\lrep{L} \tilde{G}} ((M\otimes_{k(\lambda)} L)^{\gamma},(N\otimes_{k(\lambda)} L)^{\gamma}) \cong \\
    \cong \Hom_{\lrep{L} \tilde{G}} (\bigoplus_{\gamma\in \mc{T}} (M\otimes_{k(\lambda)} L)^{\gamma},\bigoplus_{\gamma'\in \mc{T}}(N\otimes_{k(\lambda)} L)^{\gamma'}) \cong \\
    \cong \Hom_{\lrep{L} \tilde{G}} (r_*M\otimes L,r_*N \otimes L) \cong
    \Hom_{\rep \tilde{G}} (r_*M,r_*N)\otimes L,
    \end{multline*}
    where for the third isomorphism we used that 
    \[
    (M\otimes_{k(\lambda)} L)^{\gamma}\in (\lrep{L} \tilde{G})_{\lambda^{\gamma}},\quad (N\otimes_{k(\lambda)} L)^{\gamma'}\in (\lrep{L} \tilde{G})_{\lambda^{\gamma'}},
    \]
    and orthogonality of $(\lrep{L} \tilde{G})_{\lambda^{\gamma}}$ and $(\lrep{L} \tilde{G})_{\lambda^{\gamma'}}$ for $\lambda^\gamma\neq \lambda^{\gamma'}$. Thus the functor is fully faithful.

    It remains to show that the functor is essentially surjective onto $(\rep \tilde{G})_{\theta}$. For $M\in (\rep \tilde{G})_{\theta}$ we have
    \[
    \left(\End_{\rep \tilde{G}} M\right)\otimes L \cong \End_{\lrep{L} \tilde{G}} \left(M\otimes L\right) \cong \bigoplus_{\lambda'\in \theta} \End_{\lrep{L} \tilde{G}} \left(M\otimes L\right)_{\lambda'}.
    \]
    The embeddings $L\subseteq \End_{\lrep{L} \tilde{G}} \left(M\otimes  L\right)_{\lambda'}$ as scalar multiplication induces a $\Gamma$-equivariant embedding 
    \[
    k(\lambda)\otimes L\cong \bigoplus_{\lambda'\in \theta} L \subseteq \bigoplus_{\lambda'\in \theta} \End_{\lrep{L} \tilde{G}} \left(M\otimes L\right)_{\lambda'} \cong \left(\End_{\rep \tilde{G}} M\right)\otimes L,
    \]
    which descends to the embedding $k(\lambda)\subseteq \End_{\rep \tilde{G}} M$. Hence $M$ belongs to the essential image of $r_*$.

    2. Using the first part of the proposition we may assume that $k(\lambda)=k$. Let $W_1,W_2\in \lrep{A_M} G$, then
    \begin{multline*}
    \Hom_{\rep \tilde{G}}(\left(\res^{G}_{\tilde{G}}W_1\right)\otimes_{\res^{G}_{\tilde{G}}A_M^\op}M^*, \left(\res^{G}_{\tilde{G}}W_2\right)\otimes_{\res^{G}_{\tilde{G}}A_M^\op}M^*)\cong\\
    \cong \Hom_{\lrep{\left(\res^{G}_{\tilde{G}}(A_M\otimes A_M^{\op})\right)} \tilde{G}}(M\otimes M^*, \res^{G}_{\tilde{G}} (W_1^*\otimes W_2))\cong\\
    \cong \Hom_{\lrep{\left(A_M\otimes A_M^{\op}\right)} G}(A_M, W_1^*\otimes W_2)\cong
    \Hom_{\lrep{A_M} G}(W_1, W_2).
    \end{multline*}
    Thus $\Phi$ is fully faithful. Since for $N\in \rep G$ we have $\res^{G}_{\tilde{G}} N\in (\rep \tilde{G})_{\{0\}}$ and $M^*\in (\rep \tilde{G})_{\{\lambda\}}$ then for $W\in \lrep{A_M} G$ we have $\Phi(W)\in (\rep \tilde{G})_{\{\lambda\}}$. Let $N\in (\rep \tilde{G})_{\{\lambda\}}$, then $Z$ acts trivially on $N\otimes M$ and $\tilde{G}$-action descends to a $G$-action giving rise to $A_{M^*,N}\in \lrep{(A_M\otimes A_N)} G$ such that $\res^G_{\tilde{G}}A_{M^*,N}\cong N\otimes M$. The isomorphisms
    \[
    N\cong N \otimes  (M\otimes_{\res^{G}_{\tilde{G}}A_M^{\op}} M^*) \cong \left(\res^{G}_{\tilde{G}} (A_{M^*,N})\right)\otimes_{\res^{G}_{\tilde{G}}A_M^\op}  M^*
    \]
    yield the claim.
\end{proof}

\begin{corollary} \label{cor:decomposition_equiv_Azumaya}
    Let $\mc{S}\subseteq X^*(Z)$ be a transversal for the $\Gamma$-action. For $\lambda\in \mc{S}$ consider the intermediate extension $k\subseteq k(\lambda)\subseteq L$ corresponding to $\mathrm{Stab}_{\Gamma} \lambda\leqslant \Gamma$, pick some $0\neq M({\lambda})\in (\lrep{k(\lambda)} \tilde{G})_{\{-\lambda\}}$ and denote $A_{\lambda}:=A_{M(\lambda)}\in \CSAlg_G k(\lambda)$. Then for $\lambda\in\mc{S}$ the functor
     \[
     \Phi_\lambda\colon \D^b(\lrep{A_\lambda} G) \to \D^b(\rep \tilde{G}),\quad W\mapsto (r_{k(\lambda)})_* \left(\res^{G}_{\tilde{G}}(W) \otimes_{\res^{G}_{\tilde{G}}{A^{\op}_\lambda}} M(\lambda)^*\right),
     \]
     is fully faithful with the essential image $\D^b(\rep \tilde{G})_{\theta_{\lambda}}$, $\theta_{\lambda}=\{\lambda^\gamma\,|\,\gamma\in\Gamma\}$, and there is an orthogonal decomposition
     \[
     \D^b(\rep \tilde{G})=\bigoplus_{\lambda\in \mc{S}} \Phi_\lambda(\D^b(\lrep{A_\lambda} G)).
     \]
\end{corollary}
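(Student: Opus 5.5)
The plan is to derive the corollary from Proposition~\ref{prop:Elagin} together with the block decomposition~\refbr{eq:Db_rep_blocks}, passing from abelian categories to bounded derived categories. The orthogonal decomposition
\[
\D^b(\rep \tilde{G})=\bigoplus_{\theta\in \rfaktor{X^*(Z)}{\Gamma}} \D^b(\rep \tilde{G})_\theta
\]
is already available from Definition~\ref{def:blocks}. Since $\mc{S}$ is a transversal, the assignment $\lambda\mapsto\theta_\lambda$ is a bijection $\mc{S}\to \rfaktor{X^*(Z)}{\Gamma}$. It therefore suffices to show, for each fixed $\lambda\in\mc{S}$, that $\Phi_\lambda$ is fully faithful with essential image $\D^b(\rep\tilde{G})_{\theta_\lambda}=\D^b((\rep\tilde{G})_{\theta_\lambda})$. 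The existence of a nonzero $M(\lambda)$ is guaranteed by Lemma~\ref{lem:character_rep_exist}, applied over $k(\lambda)$ to the $\Gamma_\lambda$-fixed character $-\lambda$.

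Fix $\lambda$. By Proposition~\ref{prop:Elagin}(2), applied to $M:=M(\lambda)\in(\lrep{k(\lambda)}\tilde{G})_{\{-\lambda\}}$, the abelian functor $\Phi\colon \lrep{A_\lambda}G\to \rep\tilde{G}$ is fully faithful with essential image $(\rep\tilde{G})_{\theta_\lambda}$. This means $\Phi$ is an equivalence of abelian categories $\lrep{A_\lambda}G\simeq(\rep\tilde{G})_{\theta_\lambda}$. Next we check that $\Phi$ is exact and that $\Phi_\lambda$ is its derived functor. Restriction $\res^G_{\tilde{G}}$ and restriction of scalars $(r_{k(\lambda)})_*$ are exact. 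The tensor product $-\otimes_{A_\lambda^\op}M(\lambda)^*$ is exact because $A_\lambda$ is a matrix algebra over $k(\lambda)$, hence separable, so all its modules are projective over the underlying algebra; alternatively, exactness of $\Phi$ follows formally from its being an equivalence. Consequently the derived functor $\Phi_\lambda$ is computed termwise on complexes, and it is the equivalence $\D^b(\lrep{A_\lambda}G)\simeq \D^b((\rep\tilde{G})_{\theta_\lambda})$ induced by the abelian equivalence.

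The only point needing care, and the one I expect to be the main obstacle, is the identification of $\D^b((\rep\tilde{G})_{\theta})$ with a full subcategory of $\D^b(\rep\tilde{G})$, i.e. that the inclusion of the block is fully faithful on derived categories. This holds because~\refbr{eq:rep_blocks} is a direct sum decomposition of abelian categories: every object splits functorially as $M=\bigoplus_\theta M_\theta$, and this splitting is exact. Hence complexes, and in particular injective resolutions in the ind-completion, decompose blockwise, so $\Ext$-groups between objects of one block computed in $\rep\tilde{G}$ agree with those computed within the block, and $\Ext$-groups between different blocks vanish. This is exactly what is encoded in~\refbr{eq:Db_rep_blocks}, so I would simply cite it.

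Combining these steps, each $\Phi_\lambda$ is fully faithful with essential image $\D^b(\rep\tilde{G})_{\theta_\lambda}$. Summing over $\lambda\in\mc{S}$ and using the bijection $\mc{S}\cong\rfaktor{X^*(Z)}{\Gamma}$ in~\refbr{eq:Db_rep_blocks} then yields the claimed orthogonal decomposition.
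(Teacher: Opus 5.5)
Your proposal is correct and follows the paper's proof, which simply combines Proposition~\ref{prop:Elagin}(2) with the block decompositions \refbr{eq:rep_blocks} and \refbr{eq:Db_rep_blocks}. You additionally spell out steps the paper leaves implicit: why a nonzero $M(\lambda)$ exists, why $\Phi$ is exact, and why the abelian equivalence onto a block yields a fully faithful functor on bounded derived categories.
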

\begin{proof}
    Follows from Proposition~\ref{prop:Elagin} and decompositions \refbr{eq:rep_blocks}, \refbr{eq:Db_rep_blocks}.
\end{proof}
 
\subsection{Relative decompositions for isogenous pairs}    
\label{sec:relative_split}
    In this section we show that a full relative \'etale-exceptional collection for a pair $\tilde{P}\leqslant \tilde{G}$ gives rise to a full relative separable-exceptional collection for an isogenous quotient pair $P\leqslant G$. Combining this with the results of Section~\ref{sec:qs_sc} we obtain such a collection for an arbitrary quasi-split group $G$.

   \begin{theorem} \label{thm:collection_isogenous}
        Let $\tilde{G}$ be a connected affine algebraic group over $k$, $\tilde{P}\leqslant \tilde{G}$ be a parabolic subgroup, $Z\leqslant \tilde{P}$ be a subgroup scheme of multiplicative type and suppose that $Z$ is central in $\tilde{G}$. Denote $G:=\tilde{G}/Z$, $P:=\tilde{P}/Z$. Suppose that $\D^b(\rep \tilde{P})$ admits a full $\tilde{G}$-relative \'etale-exceptional (resp. exceptional) collection. Then $\D^b(\rep P)$ admits a full $G$-relative separable-exceptional (resp. Azumaya-exceptional) collection.
    \end{theorem}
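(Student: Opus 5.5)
Let $\{\tilde X_i\}_{1\le i\le n}$, $\tilde X_i\in \D^b(\lrep{K_i}\tilde P)$ with $K_i$ commutative separable $\tilde G$-algebras, be the given full $\tilde G$-relative étale-exceptional collection. The plan is to split each $\tilde X_i$ according to the central characters of $Z$ and then untwist every character block using Proposition~\ref{prop:Elagin}. Since $\tilde G$ is connected, it acts trivially on the finite étale algebra $K_i$, so $K_i=\prod_a K_{i,a}$ is a product of fields with trivial action. Replacing $\tilde X_i$ by its components $\tilde X_{i,a}:=K_{i,a}\otimes_{K_i}\tilde X_i$ keeps the collection full and étale-exceptional; mutual orthogonality of the components follows from $\ind\iHom$ being $K_i\otimes K_i$-linear and equal to $K_i$. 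We may therefore assume that each $K_i$ is a field; after base change to $K_i$, we may further take $k$ to be $K_i$ in the local arguments.

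\emph{Step 1 (blocks).} Since $Z\leqslant\tilde P$ is central in $\tilde P$, Definition~\ref{def:blocks} applies verbatim to $\tilde P$. This gives $\D^b(\rep\tilde P)=\bigoplus_\theta\D^b(\rep\tilde P)_\theta$ compatibly with restriction from $\tilde G$, and $\ind^{\tilde G}_{\tilde P}$ preserves blocks. Decompose $\tilde X_i=\bigoplus_\theta (\tilde X_i)_\theta$. Then $\ind\iEnd(\tilde X_i)=\bigoplus_{\theta,\theta'}\ind\iHom((\tilde X_i)_\theta,(\tilde X_i)_{\theta'})$. Since this equals the field $K_i$, which lies in block $\{0\}$, and contains the idempotents of the block decomposition, exactly one summand $(\tilde X_i)_{\theta_i}$ is nonzero. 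After extending to $L K_i$ the same argument isolates a single character $\lambda_i$, and it must be fixed by $\Gal(LK_i/K_i)$; otherwise the Galois conjugates would give extra idempotents. So $\tilde X_i\in \D^b(\lrep{K_i}\tilde P)_{\{\lambda_i\}}$ with $\lambda_i$ defined over $K_i$.

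\emph{Step 2 (untwisting).} By Lemma~\ref{lem:character_rep_exist} and Proposition~\ref{prop:Elagin}(1), choose $0\neq M_i\in(\lrep{K_i}\tilde G)_{\{-\lambda_i\}}$ and form $A_i:=A_{M_i}\in\CSAlg_G K_i$. Regarded over $k$ via restriction of scalars, $A_i$ is separable, and it is Azumaya when $K_i=k$. Set $X_i:=M_i\otimes\tilde X_i$. The $Z$-action on $X_i$ is trivial, so $X_i$ descends to $P$, and it carries a left action of $\iEnd M_i$, which descends to $A_i$; thus $X_i\in\D^b(\lrep{A_i}P)$. We compute
\[
\ind^G_P\iEnd X_i\cong \ind^{\tilde G}_{\tilde P}(\iEnd M_i\otimes \iEnd\tilde X_i)\cong \iEnd M_i\otimes\ind^{\tilde G}_{\tilde P}\iEnd \tilde X_i\cong A_i,
\]
using $\res$ fully faithful from $G$ to $\tilde G$, the identification $\ind^G_P=\ind^{\tilde G}_{\tilde P}$ on $Z$-trivial objects (because $G/P=\tilde G/\tilde P$), and the projection formula. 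The vanishing for $j<i$ follows in the same way from $\ind\iHom(\tilde X_i,\tilde X_j)=0$. One still has to check that the isomorphism is $\theta_{X_i}$, but this should be routine.

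\emph{Step 3 (fullness), the main obstacle.} We must show that $\D^b(\rep P)$ is generated by $W\otimes_{A_i}X_i$ for $W\in\D^b(\lrep{A_i^\op}G)$. Identify $\D^b(\rep P)$ with the block $\D^b(\rep\tilde P)_{\{0\}}$ and project the fullness of $\{\tilde X_i\}$ onto this block: it is generated by $(N\otimes_{K_i}\tilde X_i)_{\{0\}}$ with $N\in\D^b(\lrep{K_i}\tilde G)$. Only the part $N_{\theta}$ with $\theta\ni-\lambda_i$ contributes. By Corollary~\ref{cor:decomposition_equiv_Azumaya}, together with Proposition~\ref{prop:Elagin}(2) applied to $-\lambda_i$ over $K_i$, this part is of the form $W\otimes_{A_i^\op}M_i$ for some $W\in\D^b(\lrep{A_i^\op}G)$. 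The corresponding object is then $W\otimes_{A_i}(M_i\otimes\tilde X_i)=W\otimes_{A_i}X_i$. The delicate points are the bookkeeping of op-conventions and the Galois orbit issue when $K_i\neq k$; for the latter I would use $(r_{K_i})_*$ and Lemma~\ref{lem:pushpull}. In the exceptional case $K_i=k$, so each $A_i$ is central simple and we obtain an Azumaya-exceptional collection.
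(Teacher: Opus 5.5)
Your proposal is correct and is essentially the paper's proof. You pin each $\tilde X_i$ to a single central character $\lambda_i$ defined over $K_i$ using its endomorphism field. You then twist by a nonzero $\tilde G$-representation of the opposite character; your $M_i$ of character $-\lambda_i$ plays the role of the paper's $M_i^*$. Next you descend to $P$ with coefficients in the resulting equivariant central simple algebra, and verify relative exceptionality through $\res^G_{\tilde G}\ind^G_P\cong\ind^{\tilde G}_{\tilde P}\res^P_{\tilde P}$ and the projection formula. Finally you get fullness by projecting onto $\D^b(\rep\tilde P)_{\{0\}}$ and applying Corollary~\ref{cor:decomposition_equiv_Azumaya} over $K_i$.

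The Galois-orbit point you flag in Step 3 is harmless. Since $\{0\}$ is a $\Gamma$-fixed singleton, projecting onto that block commutes with $(r_{K_i})_*$, and only the $K_i$-block $\{-\lambda_i\}$ of $N$ contributes.
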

    \begin{proof}
        Let $X\in \lrep{A} \tilde{P}$ be a $\tilde{G}$-relative $A$-exceptional object with $A\in \Alg_{\tilde{G}} k$ being a commutative separable $\tilde{G}$-algebra. Forgetting the $\tilde{G}$-equivariant structure we have $A\cong \prod_{i=1}^m K_i$ for some separable finite field extensions $K_i/k$. Since $\tilde{G}$ is connected and the group of $k$-algebra automorphisms of $\prod_{i=1}^m K_i$ is finite, the action of $\tilde{G}$ on $A$ is necessarily trivial.
        
        Let $X_i\in \D^b(\lrep{A_i} \tilde{P})$, $1\le i\le n$, be a full $\tilde{G}$-relative \'etale-exceptional collection with $A_i\in\Alg_{\tilde{G}} k$ commutative separable $G$-algebras. By the above we may assume $A_i=K_i$ to be a finite separable field extension $K_i/k$ with the trivial action of $\tilde{G}$. Denote $(r_i)_*:=(r_{K_i})_*$ the corresponding functor of restriction of scalars. Since $X_i$ is $\tilde{G}$-relative $K_i$-exceptional, by adjunction we have $K_i\cong \End ( (r_i)_*X_i)$. Extending the scalars to $L$ we obtain
        \[
        \bigoplus_{\gamma \in \lfaktor{\Gamma_i}{\Gamma}} L^\gamma \cong K_i\otimes L\cong \End (((r_i)_*X_i)\otimes L) \cong \End (\bigoplus_{\gamma \in \lfaktor{\Gamma_i}{\Gamma}} (X_i\otimes_{K_i} L)^\gamma),
        \]
        where $\Gamma_i:=\Gal(L/K_i)$. Hence $\End ((X_i\otimes_{K_i} L)^\gamma)\cong L$ for every $\gamma\in\Gamma$, in particular, $X_i\otimes_{K_i} L$ is indecomposable.
        Then it follows from the decomposition~\refbr{eq:Db_rep_blocks} that $X_i\in \D^b(\lrep{K_i} \tilde{P})_{\{\lambda_i\}}$ for some $\lambda_i\in X^*(Z)^{\Gamma_i}$. Choose some $0\neq M_i\in (\lrep{K_i} \tilde{G})_{\{\lambda_i\}}$ which exists by Lemma~\ref{lem:character_rep_exist}, and denote $A_i:= A_{M_i}$. Then $X_i\otimes_{K_i} M_i^*\in  \D^b(\lrep{A^\op_i} \tilde{P})_{\{0\}}$ thus $X_i\otimes_{K_i} M_i^*\cong \res^P_{\tilde{P}} Y_i$ for some $Y_i\in \D^b(\lrep{A^\op_i} P)$.
        
        We claim that $\{Y_i\}_{1\le i\le n}$ is a full $G$-relative separable-exceptional collection. First we check that it is a $G$-relative separable-exceptional collection. 
        Using $\res^G_{\tilde{G}}\ind^G_P \cong \ind^{\tilde {G}}_{\tilde{P}}\res^P_{\tilde{P}}$ given by \cite[Proposition~I.6.11]{Jan03} and projection formula we obtain
        \begin{multline*}
        \res^G_{\tilde{G}}\ind^G_P\iHom_{\D^b(\rep P)}(Y_i,Y_j)\cong
        \ind^{\tilde {G}}_{\tilde{P}}\res^P_{\tilde{P}}\iHom_{\D^b(\rep P)}(Y_i,Y_j) \cong \\
        \cong
        \ind^{\tilde {G}}_{\tilde{P}}\iHom_{\D^b(\rep \tilde{P})}(X_i\otimes_{K_i} M_i^*,X_j\otimes_{K_j} M_j^*)
        \cong\\
        \cong
        \ind^{\tilde {G}}_{\tilde{P}}\left(\iHom_{\D^b(\rep \tilde{P})}(X_i,X_j)\otimes_{K_i\otimes K_j} \iHom_{\D^b(\rep \tilde{G})}(M_i^*,M_j^*)\right)
        \cong \\
        \cong
        \left(\ind^{\tilde {G}}_{\tilde{P}}\iHom_{\D^b(\rep \tilde{P})}(X_i,X_j)\right) \otimes_{K_i\otimes K_j} \iHom_{\D^b(\rep \tilde{G})}(M_i^*,M_j^*).
        \end{multline*}
        Since $\res^G_{\tilde{G}}$ is the fully faithful inclusion of $\D^b(\rep G)\cong \D^b(\rep \tilde{G})_{\{0\}}$, the above, together with the vanishing $\ind^{\tilde {G}}_{\tilde{P}}\iHom_{\D^b(\rep \tilde{P})}(X_i,X_j)=0$ for $1\le j< i\le n$, yields $\ind^{G}_{P}\iHom_{\D^b(\rep P)}(Y_i,Y_j)=0$ for $1\le j <i\le n$. Furthermore, for $i=j$ we have
        \begin{multline*}
        \res^G_{\tilde{G}}\ind^G_P\iEnd_{\D^b(\rep P)}(Y_i)\cong
        \left(\ind^{\tilde {G}}_{\tilde{P}}\iEnd_{\D^b(\rep \tilde{P})}(X_i)\right) \otimes_{K_i\otimes K_i} \iEnd_{\D^b(\rep \tilde{G})}(M_i^*) \cong\\
        \cong K_i\otimes_{K_i\otimes K_i}\iEnd_{\D^b(\rep \tilde{G})}(M_i^*) \cong \iEnd_{\D^b(\rep \tilde{G})}(M_i^*) \cong \res^G_{\tilde{G}} A_{i}^\op.
        \end{multline*}
        Thus the canonical homomorphism $A_i^\op\to \ind^G_P\iEnd_{\D^b(\rep P)} (Y_i)$ is an isomorphism. 

        In order to show the fullness of the collection recall that by the assumption we have
        \[
        \D^b(\rep \tilde{P})=\hull \left(\left\{\left.(r_i)_*\left(N\otimes_{K_i} X_i\right)\,\right|\, N\in \D^b(\lrep{K_i} \tilde{G})\right\}_{ 1\le i\le n} \right).
        \]
        Applying the decomposition~\refbr{eq:Db_rep_blocks} and taking into account that $X_i\in \D^b(\lrep{K_i} \tilde{P})_{\{\lambda_i\}}$, we obtain
        \[
        \D^b(\rep \tilde{P})_{\{0\}}=\hull \left(\left\{\left.(r_i)_*\left( N\otimes_{K_i} X_i\right)\,\right|\, N\in \D^b(\lrep{K_i} \tilde{G})_{\{-\lambda_i\}}\right\}_{ 1\le i\le n} \right).
        \]
        By Corollary~\ref{cor:decomposition_equiv_Azumaya} applied to $K_i$ in place of $k$ we have 
        \[
        \D^b(\lrep{K_i} \tilde{G})_{\{-\lambda_i\}} = \left\{  W\otimes_{A^{\op}_i} M_i^* \,|\, W\in \D^b(\lrep{A_i} G)\right\},
        \]
        hence
        \begin{multline*}
            \D^b(\rep \tilde{P})_{\{0\}}
            =\hull \left(\left\{\left.(r_i)_*\left((\res^{G}_{\tilde{G}}W\otimes_{A^{\op}_i} M_i^*) \otimes_{K_i} X_i\right)\,\right|\, W\in \D^b(\lrep{A_i} G)\right\}_{ 1\le i\le n} \right) = \\
            =\hull \left(\left\{\left.(r_i)_*\left(\res^{G}_{\tilde{G}}W\otimes_{A^{\op}_i} \res^{P}_{\tilde{P}}Y_i\right)\,\right|\, W\in \D^b(\lrep{A_i} G)\right\}_{ 1\le i\le n} \right)=
            \\
            =\res^{P}_{\tilde{P}}\left(\hull \left(\left\{\left.(r_i)_*\left(W\otimes_{A^{\op}_i} Y_i\right)\,\right|\, W\in \D^b(\lrep{A_i} G)\right\}_{ 1\le i\le n}\right) \right).
        \end{multline*}
        Recall that $\res^P_{\tilde{P}}\colon \D^b(\rep P)\to \D^b(\rep \tilde{P})_{\{0\}}$ is an equivalence, so the claim follows.

        After forgetting the $G$-action the algebra $A_i$ is isomorphic to $M_{n_i}(K_i)$, $n_i:=\dim_{K_i} M_i$, so $A_i\in\CSAlg_G K_i$. Thus a full $\tilde{G}$-relative exceptional collection gives rise to a full $G$-relative Azumaya-exceptional collection.
    \end{proof}

Below in this section $G$ is a quasi-split semisimple group over $k$, $T\leqslant B\leqslant P\leqslant G$ is a maximal torus, a Borel subgroup and a parabolic subgroup respectively, $\rho\colon \tilde{G}\to G$ is the simply connected cover, $\tilde{T}:=\rho^{-1} T$, $\tilde{B}:=\rho^{-1} B$, $\tilde{P}:=\rho^{-1} P$, $Z:=\ker \rho$. Recall that $Z$ is a finite group scheme of multiplicative type, $Z\leqslant \tilde{T}$ and $Z$ is central in $\tilde{G}$. We also freely use the notation and definitions of Section~\ref{sec:qs_sc} applied to $\tilde{T} \leqslant \tilde{B} \leqslant \tilde{P} \leqslant \tilde{G}$, in particular, $L$ is the splitting field of $\tilde{G}$, $\Gamma:=\Gal(L/k)$, $W$ is the Weyl group of $\tilde{G}$, $W^P$ is the set of minimal representatives for cosets $\rfaktor{W}{W_P}$, etc. 

\begin{definition}
    The embedding $Z\leqslant \tilde{T}$ induces a surjection $X^*(\tilde{T})\to X^*(Z)$. For $\lambda\in X^*(\tilde{T})$ we denote $\bar{\lambda}\in X^*(Z)$ its image under this surjection.
\end{definition}

\begin{lemma} \label{lem:Xp_character}
    Let $\le$ be a total order on $W$ refining the Bruhat order $\lebru$ and compatible with $\Gamma$-action in the sense of Definition~\ref{def:compaible_action}. Then for $v\in W^P$ we have $\hat{X}_{v,k(v)}\in \D^b(\lrep{k(v)} \tilde{P})_{\{\bar{e}_{v}\}}$ in the notation of Definitions~\ref{def:Xv_qs} and~\ref{def:blocks}.
\end{lemma}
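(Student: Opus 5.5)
We want to show that the centre $Z$ acts on $\hat{X}_{v,k(v)}$ through the single character $\bar{e}_v$. By the conservativity of extension of scalars and the compatibility of the block decomposition~\refbr{eq:Db_rep_blocks} with extension of scalars, it suffices to check this after tensoring with $L$. Proposition~\ref{prop:pushpull_X} identifies $\hat{X}_{v,k(v)}\otimes_{k(v)} L\cong \hat{X}_v$. Here $\hat{X}_v\in\D^b(\lrep{L}\tilde{P})$ is the split object of Definition~\ref{def:Xv}. So the claim reduces to the statement that $\hat{X}_v$ lies in the $\bar{e}_v$-block of $\D^b(\lrep{L}\tilde{P})$. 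This $L$-block is a single character, and $\bar{e}_v$ is $\Gamma_v$-invariant by Lemma~\ref{lem:weight_defined}, so it descends to the block $\{\bar{e}_v\}$ over $k(v)$.

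For the split statement I would use $\hat{X}_v\cong (i_{\le v})_*i^!_{\le v}\hat{\mc{Q}}_v$ together with Proposition~\ref{prop:XY_ant}, which gives
\[
\hat{X}_v\in\hat{\mc{P}}_{I_{\le v}\cap J_v}.
\]
Equivalently, $\hat{X}_v=(j_{\ge v})_*j^*_{\ge v}\hat{\mc{P}}^*_v\in\hat{\mc{Q}}_{I_{\ge v}\cap J_v}$. Alternatively, one can avoid $J_v$ entirely. The object $\hat{X}_v$ sits in a triangle whose other terms are $\hat{\mc{P}}^*_v$ and an object of $\hat{\mc{P}}_{<v}$. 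By Lemma~\ref{lem:SOD_direct_sum}, applied to the decomposition $\D^b(\lrep{L}\tilde{P})=\bigoplus_\mu\D^b(\lrep{L}\tilde{P})_\mu$ and to the semiorthogonal decomposition~\refbr{eq:SOD_PQ}, the projection functors $j_*j^*$ and $i_*i^!$ preserve each block. Hence $\hat{X}_v=(j_{\ge v})_*j^*_{\ge v}\hat{\mc{P}}^*_v$ lies in whatever block contains $\hat{\mc{P}}^*_v$. This is the cleanest route, and I would use it.

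The remaining task is to determine the block of $\hat{\mc{P}}^*_v$. The module $\mc{P}_v=\mathrm{H}^0(Z_{w^{-1}},\mathscr{L}(L_{-w_0ve_v}))$ with $w=w_0v$ is a $\tilde{B}$-module of global sections of a line bundle. The centre $Z\leqslant\tilde{T}$ acts trivially on $\rfaktor{\tilde{G}}{\tilde{B}}$. It therefore acts on $\mathscr{L}(L_\mu)$, and hence on its sections, by the scalar character $\bar{\mu}$; this is the point to verify carefully with the convention for $\mathscr{L}$. Every weight of $\mc{P}_v$ lies in the $W$-orbit of $-w_0ve_v$ modulo the root lattice; as a sanity check, by \cite[Theorem~1.6]{vdK89} the weights $\lambda$ of $\mc{P}(-e_v)^*$ satisfy $\lambda\leant e_v$, and they are congruent to each other modulo roots. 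Roots restrict trivially to the centre, and $W$ acts trivially on $X^*(Z)$, since $w\mu-\mu$ lies in the root lattice. So $Z$ acts on $\mc{P}_v$ via $\overline{-e_v}=-\bar{e}_v$, and on the dual $\hat{\mc{P}}^*_v$ via $\bar{e}_v$. Induction $\ind^{\tilde{P}}_{\tilde{B}}$ preserves the $Z$-character because $Z$ is central, so $\hat{\mc{P}}^*_v$ lies in the $\bar{e}_v$-block. The main obstacle I expect is getting these sign and duality conventions right, namely whether the central character is $\bar{e}_v$ or $-\bar{e}_v$. I would cross-check it against $\hat{\mc{Q}}_v$, whose weights include $ve_v\equiv e_v$, to confirm that both descriptions of $\hat{X}_v$ give the same block.
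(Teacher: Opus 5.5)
Your proof is correct and is essentially the paper's: both place $\hat{\mc{P}}^*_v$ in the single block $\bar{e}_v$, then use Lemma~\ref{lem:SOD_direct_sum} to see that the projection $j_*j^*$ preserves blocks. The differences are minor: the paper applies Lemma~\ref{lem:SOD_direct_sum} directly over $k(v)$ to the decomposition of Proposition~\ref{prop:PQ_qs}, so it does not need Proposition~\ref{prop:pushpull_X}, and it fixes the central character, including the sign you were worried about, using the one-dimensional socle of weight $-e_v$ of $\mc{P}_v$ (hence indecomposability) rather than the scalar action of $Z$ on $\mathscr{L}(L_\mu)$.
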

\begin{proof}
    The $L$-representation $\mc{P}_v\cong \res^{\tilde{P}}_{\tilde{B}} (\hat{\mc{P}}_{v,k(v)}\otimes_{k(v)} L)$ of $B$ has 1-dimensional socle given by the weight ${-e_{v}}\in X^*(\tilde{T})$ \cite[\S~1.3]{vdK89}, hence it is indecomposable and $\mc{P}^*_v \in (\lrep{L} \tilde{B})_{\bar{e}_{v}}$. Then $\hat{\mc{P}}^*_{v}\in (\lrep{L} \tilde{P})_{\bar{e}_{v}}$ and hence $\hat{\mc{P}}^*_{v,k(v)}\in (\lrep{k(v)} \tilde{P})_{\{\bar{e}_{v}\}}$. The claim follows by Lemma~\ref{lem:SOD_direct_sum}. 
\end{proof}

\begin{definition}
\label{def:equiv_Tits}
    Let $\lambda\in X^*(\tilde{T})_+$ be a dominant weight, $ K:=k(\lambda):=L^{\operatorname{Stab}_\Gamma \lambda}$ be its field of definition and $K_\lambda\in \lrep{K} \tilde{B}$ be the $1$-dimensional $K$-representation of weight $\lambda$. Denote
    \[
    \nabla_\lambda:=\ind_{\tilde{B}}^{\tilde{G}} K_{\lambda} \in \lrep{K} \tilde{G}.
    \]
    It follows from \cite[Proposition~II.2.2.b)]{Jan03} that $(\nabla_\lambda\otimes_K L)\in (\lrep{L} \tilde{G})_{\bar{\lambda}}$, hence $\nabla_\lambda\in (\lrep{K} \tilde{G})_{\{\bar{\lambda}\}}$. We denote
    \[
    A_\lambda:=A_{\nabla_\lambda}\in \CSAlg_G K
    \]
    in the notation of Definition~\ref{def:Azumaya_equiv}, this is the $G$-equivariant central simple algebra over $K$ satisfying $\res^G_{\tilde{G}} A_\lambda \cong \iEnd \nabla_\lambda$. We refer to $A_\lambda$ as an \textit{equivariant Tits algebra} since the separable algebras introduced in \cite[\S~4]{Tits71} may be recovered from these ones as $\mc{A}_{\lambda}^{\mathcal{E}}$ (see Definition~\ref{def:descent} for the notation) twisting by an appropriate $G$-torsor $\mc{E}$, see \cite[Lemma~3.3]{Pan94} for the comparison.  
    
    Let $\lambda_1,\lambda_2\in X^*(\tilde{T})_+$ be such that $\bar{\lambda}_1 = \bar{\lambda}_2 \in X^*(Z)$ and $k(\lambda_1)=k(\lambda_2)$ (note that the second equality in general does not follow from the first one), then Lemma~\ref{lem:Azumaya_equiv} yields that the algebras $A_{\lambda_1}$ and $A_{\lambda_2}$ are equivariantly Brauer equivalent and there is an equivalence of categories $\lrep{A_{\lambda_1}} G \cong \lrep{A_{\lambda_2}} G$.
    
    Below we will be interested in the equivariant Tits algebras associated to the Steinberg weights (see Definition~\ref{def:Steinberg}) which we denote
    \[
    A_v:=A_{ve_v} \otimes_{k(ve_v)} k(v) \cong A_{\nabla_{ve_v}\otimes_{k(ve_v)} k(v)} \in\CSAlg_G k(v).
    \]
    Note that by Lemma~\ref{lem:weight_defined} we have $k(ve_v)\subseteq k(e_v)=k(v)$ in the notation of Definition~\ref{def:kv}, and in general the inclusion may be strict.
\end{definition}

\begin{definition} \label{def:Yqs}
    Let $\le$ be a total order on $W$ extending the Bruhat order $\lebru$ and compatible with $\Gamma$-action, and for $v\in W^P$ let $\hat{X}_{v,k(v)}\in \D^b(\lrep{k(v)} \tilde{P})$ be as in Definition~\ref{def:Xv_qs}. Then by Lemma~\ref{lem:Xp_character} and considerations from Definition~\ref{def:equiv_Tits} we have 
    \[
    \hat{X}_{v,k(v)}\otimes_{k(v)} \nabla_{e_v}^*\in \D^b(\lrep{k(v)} \tilde{P})_{\{0\}}\cong \D^b(\lrep{k(v)} P),
    \]
    giving rise to $\hat{Y}_v\in \D^b(\lrep{A^{\op}_v} P)$ characterized by the property
    \[
    \res^{P}_{\tilde{P}} \hat{Y}_v \cong \hat{X}_{v,k(v)}\otimes_{k(v)} \nabla_{e_v}^*.
    \]
\end{definition}

\begin{theorem} \label{thm:SOD_general}
    Let $\le$ be a total order on $W$ extending the Bruhat order $\lebru$ and compatible with $\Gamma$-action, and let $\{v_1,v_2,\hdots, v_m\}\subseteq W^P$ be a transversal  for the $\Gamma$-action enumerated in such a way that $v_m<v_{m-1}<\hdots<v_1$. Denote $A_i:=A_{v_i}$ and 
    \[
    \hat{Y}_i:=\hat{Y}_{v_i}\in \D^b(\lrep{A^{\op}_i} P)
    \]
    in the notation of Definition~\ref{def:Yqs}.  Then $\{\hat{Y}_i\}_{1\le i\le m}$ is a full $G$-relative separable-exceptional collection, the functors 
    \[
    \Phi_i\colon \D^b(\lrep{A_i} G) \to \D^b(\rep P), \quad M\mapsto (r_{k(v_i)})_*(M\otimes_{A^{\op}_i} \hat{Y}_i),
    \]
    are fully faithful and there is a semiorthogonal decomposition
        \[
        \D^b(\rep P) = \langle \Phi_1(\D^b(\lrep{A_1} G)), \Phi_2(\D^b(\lrep{A_2} G)), \hdots, \Phi_{m}(\D^b(\lrep{A_m} G))\rangle.
        \]
\end{theorem}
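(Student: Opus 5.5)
The plan is to combine Theorem~\ref{thm:SOD_rep_qs} for the simply connected cover $\tilde{P}\leqslant\tilde{G}$ with the isogeny argument in the proof of Theorem~\ref{thm:collection_isogenous}. The choice of $M_i$ in that proof is made explicit as $\nabla_{e_{v_i}}$ (more precisely its base change to $k(v_i)$), so that the resulting algebras are the equivariant Tits algebras $A_{v_i}$.

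\textbf{Step 1.} First, Theorem~\ref{thm:SOD_rep_qs} applied to $\tilde{T}\leqslant\tilde{B}\leqslant\tilde{P}\leqslant\tilde{G}$, with the same order and transversal, gives a full $\tilde{G}$-relative étale-exceptional collection $\hat{X}_i:=\hat{X}_{v_i,k(v_i)}\in\D^b(\lrep{k(v_i)}\tilde{P})$. Here $K_i=k(v_i)$, and $\tilde{G}$ acts trivially on $K_i$. By Lemma~\ref{lem:Xp_character}, $\hat{X}_i$ lies in the block $\D^b(\lrep{k(v_i)}\tilde{P})_{\{\bar{e}_{v_i}\}}$. Note that $\bar{e}_{v_i}$ is $\Gamma_i$-invariant by Lemma~\ref{lem:weight_defined}. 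Also, $\overline{v_ie_{v_i}}=\bar{e}_{v_i}$, since $W$ acts trivially on $X^*(Z)$: $Z$ is central, so $v\lambda-\lambda$ lies in the root lattice, which is killed in $X^*(Z)$.

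\textbf{Step 2.} Next, set $M_i:=\nabla_{v_ie_{v_i}}\otimes_{k(v_ie_{v_i})}k(v_i)$. By Definition~\ref{def:equiv_Tits}, $M_i$ is a nonzero object of $(\lrep{k(v_i)}\tilde{G})_{\{\bar{e}_{v_i}\}}$ with $A_{M_i}=A_{v_i}=A_i$. (The statement writes $\nabla_{e_v}$ in Definition~\ref{def:Yqs}, which I read as $\nabla$ of the dominant weight $ve_v$.) The object $\hat{X}_i\otimes_{k(v_i)}M_i^*$ lies in the trivial block, so it descends to $\hat{Y}_i\in\D^b(\lrep{A_i^{\op}}P)$; this is exactly Definition~\ref{def:Yqs}.

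\textbf{Step 3.} Now repeat the computation in the proof of Theorem~\ref{thm:collection_isogenous} verbatim. Use $\res^G_{\tilde{G}}\ind^G_P\cong\ind^{\tilde{G}}_{\tilde{P}}\res^P_{\tilde{P}}$ and the projection formula to write
\[
\res^G_{\tilde{G}}\ind^G_P\iHom(\hat{Y}_i,\hat{Y}_j)\cong\bigl(\ind^{\tilde{G}}_{\tilde{P}}\iHom(\hat{X}_i,\hat{X}_j)\bigr)\otimes_{K_i\otimes K_j}\iHom(M_i^*,M_j^*).
\]
This yields the vanishing for $j<i$. For $i=j$ it yields $\res^G_{\tilde{G}}A_i$; the opposite algebra appears for $M^*$ in place of $M$, which is consistent with $\hat{Y}_i$ being an $A_i^{\op}$-module. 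Fullness follows by restricting the generation statement for $\D^b(\rep\tilde{P})$ to the trivial block. By Corollary~\ref{cor:decomposition_equiv_Azumaya} over $K_i$, the block $\D^b(\lrep{K_i}\tilde{G})_{\{-\bar{e}_{v_i}\}}$ equals $\{W\otimes_{A_i}M_i\}$, with $W$ ranging over $\D^b(\lrep{A_i} G)$ up to the op-convention. Hence $\D^b(\rep P)$ is generated by the objects $(r_{k(v_i)})_*(W\otimes_{A_i^{\op}}\hat{Y}_i)$. Finally, fully faithfulness of the $\Phi_i$ and the semiorthogonal decomposition follow from Proposition~\ref{prop:SOD_from_collection_rep}. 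Combining restriction of scalars with $A_i$-modules is harmless, since $A_i$ viewed over $k$ is still separable.

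\textbf{Main obstacle.} The one delicate point I expect is bookkeeping of the $\op$ conventions and sign of characters (using $M_i^*$ versus $M_i$, $A_i$ versus $A_i^{\op}$, $\pm\bar{e}_{v_i}$). The task is to make sure the functor $\Phi_i$ as stated, $M\mapsto(r_{k(v_i)})_*(M\otimes_{A_i^{\op}}\hat{Y}_i)$, matches the functor of Proposition~\ref{prop:SOD_from_collection_rep}. I would check this after base change to $L$, where everything reduces to $\End$ of the split $\nabla$'s.
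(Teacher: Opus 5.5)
Your proposal follows the paper's proof: apply Theorem~\ref{thm:SOD_rep_qs} to $\tilde{P}\leqslant\tilde{G}$, run the argument of Theorem~\ref{thm:collection_isogenous} with $M_i$ the base change to $k(v_i)$ of $\nabla_{v_ie_{v_i}}$, and conclude with Proposition~\ref{prop:SOD_from_collection_rep}; your reading of $\nabla_{e_v}$ as the costandard module of the dominant weight $ve_v$ is the correct one. The one slip is in the fullness step: the $\{-\bar e_{v_i}\}$-block is $\{W\otimes_{A_i^{\op}} M_i^*\mid W\in\D^b(\lrep{A_i} G)\}$, not $\{W\otimes_{A_i} M_i\}$, since the latter lies in the $\{\bar e_{v_i}\}$-block; with that correction the rest of your argument, including the generation by the objects $(r_{k(v_i)})_*(W\otimes_{A_i^{\op}}\hat{Y}_i)$, goes through as written.
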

\begin{proof}
    Theorem~\ref{thm:SOD_rep_qs} says that $\{\hat{X}_i\}_{1\le i\le m}$ that are used to define $\{\hat{Y}_i\}_{1\le i\le m}$ form a full $\tilde{G}$-relative \'etale-exceptional collection in $\D^b(\rep \tilde{P})$. The proof of Theorem~\ref{thm:collection_isogenous} yields that $\{\hat{Y}_i\}_{1\le i\le m}$ is a full $G$-relative separable-exceptional collection, and Proposition~\ref{prop:SOD_from_collection_rep} gives the full faithfulness of functors and the semiorthogonal decomposition.
\end{proof}

\begin{remark} \label{rem:equiv_Tits_prop}
    In the notation of Theorem~\ref{thm:SOD_general} the following holds.
    \begin{enumerate}
        \item $A_i\in \CSAlg_G k(v_i)$.
        \item If $G$ is split, then the collection $\{\hat{Y}_i\}_{1\le i\le m}$ is $G$-relative Azumaya-exceptional.
        \item $[W:W_P]=\sum_{i=1}^m [k(v_i):k]$.
    \end{enumerate}
    All these properties are immediate from the definitions.
\end{remark}

\begin{remark} \label{rem:additional_orth}
    The collection $\{\hat{Y}_i\}_{1\le i\le m}$ depends on the choice of the total order $\le$. Making the choice as in Proposition~\ref{prop:more_orthogonality} one obtains additional orthogonality properties as in loc.cit.
\end{remark}

\newcommand{\etalchar}[1]{$^{#1}$}

\Addresses


\begin{thebibliography}{BLM{\etalchar{+}}21}

\bibitem[Ana12]{Ana12}
A.~Ananyevskiy,
\emph{On the algebraic $K$-theory of some homogeneous varieties},
Doc. Math. 17 (2012), 167--193

\bibitem[AG60]{AG60}
M.~Auslander, O.~Goldman,
\emph{The Brauer Group of a Commutative Ring},
Trans. Am. Math. Soc. 97:3 (1960), 367--409

\bibitem[Bae16]{Ba16}
S.~Baek,
\emph{Semiorthogonal decompositions for twisted Grassmannians},
Proc. Am. Math. Soc. 144 (2016),1--5

\bibitem[BLM{\etalchar{+}}21]{BLMNPS21}
A.~Bayer, M.~Lahoz, E.~Macr\`i, H.~Nuer, A.~Perry, P.~Stellari,
\emph{Stability conditions in families},
Publ. Math. IHES 133 (2021), 157--325

\bibitem[Bei79]{B79}
A.~A.~Beilinson,
\emph{Coherent sheaves on $\mathbb{P}^n$ and problems of linear algebra},
Funct. Anal. Appl. 12 (1979), 214--216

\bibitem[BZFN10]{BZFN10}
D.~Ben-Zvi, J.~Francis, D.~Nadler, 
\emph{Integral Transforms and Drinfeld Centers in Derived Algebraic Geometry},
J. Amer. Math. Soc. 23 (2010), 909--966

\bibitem[Ber09]{Ber09}
M.~Bernardara
\emph{A semiorthogonal decomposition for Brauer–Severi schemes},
Math. Nachr. 282:10 (2009), 1406--1413

\bibitem[Blu12]{Bl12}
M.~Blunk,
\emph{A derived equivalence for some twisted projective homogeneous varieties},
arXiv:1204.0537

\bibitem[BK89]{BK89}
A.~Bondal, M.~Kapranov,
\emph{Representable functors, Serre functors, and mutations},
Izv. Akad. Nauk SSSR Ser. Mat. 53 (1989), 1183--1205; English transl., Math. USSR-Izv. 35 (1990), 519--541

\bibitem[BO01]{BO01}
A.~Bondal, D.~Orlov,
\emph{Reconstruction of a variety from the derived category and groups of autoequivalences},
Comp. Math. 125:3 (2001), 327--344

\bibitem[BO02]{BO02}
A.~Bondal, D.~Orlov,
\emph{Derived categories of coherent sheaves},
Proc. Internat. Congress of Mathematicians (Beijing, 2002), vol. II, Higher Ed. Press, Beijing (2002): 47--56

\bibitem[BVdB03]{BVdB03}
A.~Bondal, M.~Van~den~Bergh,
\emph{Generators and representability of functors in commutative and noncommutative geometry},
Moscow Math. J. 3 (2003), 1--36

\bibitem[BDG17]{BDG17}
I.~Burban, Y.~Drozd, V.~Gavran,
\emph{Minors and resolutions of non-commutative schemes},
Eur. J. Math. 3:2 (2017), 311--341

\bibitem[Cal00]{Cal00}
A.~C\v ald\v araru,
\emph{Derived Categories of Twisted Sheaves on Calabi-Yau Manifolds},
Ph.D. thesis, Cornell University (2000), available at
\url{https://people.math.wisc.edu/~caldararu/}

\bibitem[DRC25]{DRC25}
A.~Dhillon, S.~Roy-Chowdhury,
\emph{Semiorthogonal decompositions for generalised Severi--Brauer schemes},
J. Algebra 680:15 (2025), 70--95

\bibitem[EGA II]{EGA2}
A.~Grothendieck, 
\emph{\'El\'ements de g\'eom\'etrie alg\'ebrique II. \'Etude globale \'el\'ementaire de quelques classes de morphismes},
IHES Publ. Math. No. 11, 1961

\bibitem[EGA IV${}_3$]{EGA43}
A.~Grothendieck, 
\emph{\'El\'ements de g\'eom\'etrie alg\'ebrique IV: \'Etude locale des sch\'emas et des morphismes de sch\'emas, Troisi\`eme partie},
IHES Publ. Math. No. 28, 1967

\bibitem[Ela09]{Ela09}
A.~D.~Elagin,
\emph{Semiorthogonal decompositions of derived categories of equivariant coherent sheaves},
Izv. Math., 73:5 (2009), 893--920

\bibitem[Fon24]{Fon24}
A.~V.~Fonarev,
\emph{Derived categories of Grassmannians: a survey},
Russian Math. Surveys 79:5 (2024), 807--845

\bibitem[Gro68]{Gro68}
A.~Grothendieck,
\emph{Le groupe de Brauer. I. Alg\`ebres d'Azumaya et interpr\'etations diverses},
in Dix Expos\'es sur la Cohomologie des Sch\'emas, North-Holland, Amsterdam, 1968

\bibitem[HNR19]{HNR19}
J.~Hall, A.~Neeman, D.~Rydh,
\emph{One positive and two negative results for derived categories of algebraic stacks},
J. Inst. Math. Jussieu 18:5 (2019), 1087--1111

\bibitem[Hum92]{Hu92}
J.~E.~Humphreys,
\emph{Reflection Groups and Coxeter Groups},
Cambridge University Press, 1992

\bibitem[Huy06]{Huy06}
D.~Huybrechts,
\emph{Fourier--Mukai transforms in algebraic geometry},
Oxford Mathematical Monographs, 2006

\bibitem[Jan03]{Jan03}
J.~C.~Jantzen,
\emph{Representations of algebraic groups},
2nd edition, Mathematical Surveys and Monographs, 107, 2003

\bibitem[Kap84]{Kap84}
M.~M.~Kapranov,
\emph{Derived category of coherent sheaves on Grassmann manifolds},
Izv. Akad. Nauk SSSR Ser. Mat. 48:1 (1984), 192--202

\bibitem[Kap88]{Kap88}
M.~M.~Kapranov,
\emph{On the derived categories of coherent sheaves on some homogeneous spaces},
Invent. Math. 92:3 (1988), 479--508

\bibitem[KMRT98]{KMRT98}
M.-A.~Knus, A.~Merkurjev, M.~Rost, J.-P.~Tignol,
\emph{The book of involutions,}
Colloquium Publications, vol. 44, AMS, 1998

\bibitem[KO74]{KO74}
M.-A.~Knus, M.~Ojanguren,
\emph{Th\'eorie de la descente et alg\`ebres d'Azumaya},
Lecture Notes in Mathematics, 389. Springer-Verlag, Berlin, New York, 1974

\bibitem[Kuz06]{Kuz06}
A.~Kuznetsov,
\emph{Hyperplane sections and derived categories},
Izv. Ross. Akad. Nauk Ser. Mat. 70:3 (2006), 23--128

\bibitem[Kuz08]{Kuz08}
A.~Kuznetsov,
\emph{Derived categories of quadric fibrations and intersections of quadrics},
Adv. Math. 218:5 (2008), 1340--1369

\bibitem[Kuz11]{Kuz11}
A.~Kuznetsov,
\emph{Base change for semiorthogonal decompositions},
Compos. Math. 147:3 (2011), 852--876

\bibitem[Kuz14]{Kuz14}
A.~Kuznetsov,
\emph{Semiorthogonal decompositions in algebraic geometry},
In Proceedings of the International Congress of Mathematicians—Seoul 2014. Vol. II, 635--660. Kyung Moon Sa, Seoul, 2014

\bibitem[MR16]{MR16}
C.~Mautner, S.~Riche,
\emph{On the exotic $t$–structure in positive characteristic},
Int. Math. Res. Not. 2016:18 (2016), 5727--5774

\bibitem[Mil80]{Mi80}
J.~S.~Milne,
\emph{\'Etale Cohomology},
Princeton Mathematical Series 33, Princeton University Press, 1980

\bibitem[Mil17]{Mi17}
J.~S.~Milne,
\emph{Algebraic groups. The theory of group schemes of finite type over a field},
Cambridge Studies in Advanced Mathematics, 170. Cambridge University Press, Cambridge, 2017

\bibitem[Nov23]{Nov23}
S.~Novakovi\'c,
\emph{On non-existence of full exceptional collections on some relative flags},
Rocky Mountain J. Math. 53:6 (2023), 1953--1964

\bibitem[Orl93]{O93}
D.~O.~Orlov
\emph{Projective bundles, monoidal transformations, and derived categories of coherent sheaves}
Russian Acad. Sci. Izv. Math. 41:1 (1993), 133--141

\bibitem[Orl20]{O20}
D.~Orlov,
\emph{Finite-dimensional differential graded algebras and their geometric realizations}
Advances in Mathematics 366 (2020), 107096

\bibitem[Pan94]{Pan94}
I.~Panin,
\emph{On the algebraic $K$-theory of twisted flag varieties}, 
K-Theory 8 (1994), 541--585

\bibitem[Sam07]{Sam07}
A.~Samokhin,
\emph{Some remarks on the derived categories of coherent sheaves on homogeneous spaces},
J. London Math. Soc. 76:1 (2007), 122--134

\bibitem[SvdK24]{SvdK24}
A.~Samokhin, W.~van~der~Kallen, 
\emph{Highest weight category structures on $Rep(B)$ and  full exceptional collections on generalized flag varieties over $\mathbb Z$}, 
arXiv:2407.13653.

\bibitem[Ser97]{Serre97}
J.-P.~Serre, \emph{Galois Cohomology},
Springer-Verlag Berlin 1997

\bibitem[Ste75]{St75}
R.~Steinberg,
\emph{On a theorem of Pittie},
Topology 14 (1975), 173--177

\bibitem[Tho87]{Th87}
R.~W.~Thomason,
\emph{Equivariant resolution, linearization, and Hilbert’s fourteenth problem over arbitrary base schemes},
Adv. in Math. 65 (1987), 16--34

\bibitem[Tho97]{Th97}
R.~W.~Thomason,
\emph{The classification of triangulated subcategories},
Comp. Math. 105:1 (1997), 1--27

\bibitem[Tit71]{Tits71}
J.~Tits,
\emph{Repr\'esentations lin\'eaires irr\'eductibles d'un groupe r\'eductif sur un corps quelconque},
J. Reine Angew. Math. 247 (1971), 196--220

\bibitem[vdK89]{vdK89}
W.~van~der~Kallen,
\emph{Longest weight vectors and excellent filtrations},
Math. Z. 201 (1989), 19--31

\bibitem[vdK93]{vdK93}
W.~van~der~Kallen,
\emph{Lectures on Frobenius splittings and $B$-modules}.
Notes by S.~P.~Inamdar. Published for the Tata Institute of Fundamental Research, Bombay; by Springer-Verlag, Berlin, 1993

\bibitem[Vis07]{Vi07}
A.~Vistoli,
\emph{Notes on Grothendieck topologies, fibered categories and descent theory},
arXiv:0412512v4
 
\end{thebibliography}
\end{document}